\DeclareFontFamily{U}{mathb}{\hyphenchar\font45}
\DeclareFontShape{U}{mathb}{m}{n}{ <5> <6> <7> <8> <9> <10> gen * mathb <10.95> mathb10 <12> <14.4> <17.28> <20.74> <24.88> mathb12 }{}
\DeclareSymbolFont{mathb}{U}{mathb}{m}{n}
\DeclareMathSymbol{\selfmap}{3}{mathb}{"FD}
\newcommand\full{{\operatorname{full}}}
\newcommand{\wC}{\widehat{\mathbb{C}}}
\newcommand{\Disk}{\mathbb{D}}
\newcommand{\bfilled}{{\boldsymbol{\mathfrak K} }}
\newcommand{\filled}{{\mathfrak K }}
\newcommand{\Julia}{{\mathfrak J }}
\newcommand{\Post}{{\mathfrak P }}
\newcommand{\Hub}{{\mathfrak T}}
\newcommand{\per}{{\operatorname{per}}}
\newcommand{\ExpN}{{\operatorname{EN}}}
\newcommand{\AD}{{\operatorname{AD}}}
\newcommand{\WAD}{{\operatorname{WAD}}}
\newcommand{\bush}{{\mathfrak B }}
\newcommand{\bbush}{{\boldsymbol {\mathfrak B} }}
\newcommand{\bUpsilon}{{\boldsymbol {\Upsilon} }}
\newcommand{\balpha}{{\ba}}
\newcommand{\bPi}{{\boldsymbol \Pi}}
\newcommand{\can}{{\operatorname{can}}}
\renewcommand{\sp}{{\ \ }}
\newcommand{\hide}[1]{}
\renewcommand{\SS}{{\mathcal S}}
\newcommand{\Fam}{{\mathcal F}}
\newcommand{\FamG}{{\mathcal G}}
\newcommand{\FamS}{{\mathcal S}}
\newcommand{\Width}{{\mathcal W}}
\newcommand\new{{\operatorname{new}}}
\newcommand{\rrto}{\rightrightarrows}
\renewcommand{\AA}{{\mathcal A}}
\newcommand{\bbt}{{\mathbf t}}
\newcommand{\rn}[1]{\text{\romannumeral#1}}%
\newtheorem{mthm}{Theorem}
\renewcommand{\max}{{\operatorname{max}}}
\begin{document}

\newtheorem{thm}{Theorem}[section]
\newtheorem{cor}[thm]{Corollary}
\newtheorem{lem}[thm]{Lemma}
\newtheorem{prop}[thm]{Proposition}
\newtheorem{obs}[thm]{Observation}
\newtheorem{rem}[thm]{Remark}
\newtheorem{notation}[thm]{Notation}
\newtheorem{exc}[thm]{Exercise}
\newtheorem{exe}[thm]{Exercise}
\newtheorem{problem}[thm]{Problem}

\newtheorem{defn}[thm]{Definition}

\theoremstyle{plain}
\newtheorem*{thmA}{Main Theorem (physical version)}
\newtheorem*{thmB}{Main Theorem (dynamical version)}
\newtheorem*{thmC}{Theorem (Global Lee-Yang-Fisher Current)}
\newtheorem*{thmRIG}{Local Rigidity Theorem}
\newtheorem*{LY Theorem}{Lee-Yang Theorem}
\newtheorem*{LY Theorem2}{General Lee-Yang Theorem}
\newtheorem*{LY TheoremBC}{Lee-Yang Theorem with Boundary Conditions}
\newtheorem*{LY Theorem2BC}{General Lee-Yang Theorem with Boundary Conditions}
\newtheorem*{conj}{Conjecture}

\numberwithin{equation}{section}
%\numberwithin{figure}{section}

\newcommand{\figref}[1]{Figure~\ref{#1}}
\newcommand{\Rmig}{R} %the Migdal-Kadinoff renormalization in U,V,W coords.
\newcommand{\Cmig}{C} %the invariant ``cylinder'' for \Rmig
\newcommand{\Cmigbl}{C_0}
\newcommand{\TOPmig}{\mathrm{T}} %the invariant ``cylinder'' for \Rmig
\newcommand{\BOTTOMmig}{\mathrm{B}} %the invariant ``cylinder'' for \Rmig
%fixed points:
\newcommand{\FIXmig}{b} %the fixed points on the invariant ``cylinder'' for \Rmig
\newcommand{\CFIXmig}{e} %the complex attacting fixed points for \Rmig
%indeterminant points:
\newcommand{\INDmig}{a} %the indeterminant points for \Rmig
\newcommand{\Smig}{S} %the principal locus for \Rmig, i.e. the line U+2V+W=0

\newcommand{\thin} {{\mathrm{thin}}}
\newcommand{\thick} {{ \mathrm{thick}}}
\newcommand{\loc}{{\mathrm {loc}}}
\newcommand{\hor}{{\mathrm {hor}}}
\newcommand{\ver}{{\mathrm {ver}}}
\newcommand{\ess}{{\mathrm{ess}}}
\newcommand{\ness}{{\mathrm{ne}}}
\newcommand{\pro}{{\mathrm{pr}}}

\newcommand{\locflux} {\WW^\perp_\loc}

\newcommand{\re}{{\mathrm{(r)}}}
\newcommand{\essre}{{\mathrm{ess(r)}}}
\newcommand{\nere}{{\mathrm{ne(r)}}}
\newcommand{\verre}{{\mathrm{ver(r)}}}

\newcommand{\Cyl}{{\mathrm{Cyl}}}
\newcommand{\hsubset}{{ \,\underset{h}{\subset}\, }    }
\newcommand{\hsupset}{{ \, \underset{h}{\supset} \, }     }

\newcommand{\lp}{{\,\mathrm {dominates}\,}}

\newcommand{\Rphys}{\mathcal R} %the Physical renormalization in Z,T,S coords.
\newcommand{\Cphys}{\mathcal C} %the invariant cylinder for \Rphys
\newcommand{\Solid}{\mathcal{SC}}
\newcommand{\Solidmig}{SC}
\newcommand{\Secphys}{\Pi}
\newcommand{\Secmig}{P}
\newcommand{\TOPphys}{\mathcal T} %Top the invariant cylinder for \Rphys
\newcommand{\Cphystl}{{\mathcal C}_1}
\newcommand{\Cphysbl}{{\mathcal C}_0}
\newcommand{\Cphyslow}{{\mathcal C}_*}
\newcommand{\BOTTOMphys}{\mathcal B} %Top the invariant cylinder for \Rphys
%fixed points:
\newcommand{\FIXphys}{\beta} %the fixed points on the invariant cylinder for \Rphys
\newcommand{\CFIXphys}{\eta} %the complex attacting fixed points for \Rphys
%indeterminant points:
\newcommand{\INDphys}{\alpha} %the indeterminant points for \Rmig
\newcommand{\Sphys}{\mathcal S} %the principal locus for \Rphys, i.e. the curve line z^2+2zt+1=0
\newcommand{\Icurve}{\mathcal G} %image of the indeterminacy pts
\newcommand{\Imig}{G} 
\newcommand{\PI}{{\mathcal A}}
\newcommand{\crittemps}{{\mathscr C}} %The set of all critical tempuratures
\newcommand{\epoints}{{\mathscr E}} %The set of all endpoints of high-tempurature hairs

\newcommand{\Line}{L}
\newcommand{\Lzero}{L_0}
\newcommand{\Lone}{L_1}
\newcommand{\Ltwo}{L_2}
\newcommand{\Lthree}{L_3}
\newcommand{\Lfour}{L_4}

\newcommand{\LLzero}{\LL_0}
\newcommand{\LLone}{\LL_1}

\newcommand{\Div}{{\rm Div}}
\newcommand{\Tongue}{\Upsilon}

\newcommand{\Par}{{{\mathcal P}}}
\newcommand{\inv}{{\iota}}

\newcommand{\leg}{{\mathrm {leg}}}
\newcommand{\il}{{\mathrm {il}}}

\newcommand{\Mconv}{{\rm M}}
\newcommand{\Hconv}{{\rm H}}
\newcommand{\tconv}{{\rm t}}

\newcommand{\Weight}{W}

\newcommand{\KSQRT}{\widehat \KK}

\newcommand{\ex}{{\mathrm{exc}}}
\newcommand{\out}{{\mathrm{out}}}

\newcommand{\vertbondpp}{\, \underset{\oplus}{\overset{\oplus}{|}} \, }
\newcommand{\vertbondpm}{\, \underset{\ominus}{\overset{\oplus}{|}} \,}
\newcommand{\vertbondmp}{\, \underset{\oplus}{\overset{\ominus}{|}} \, }
\newcommand{\vertbondmm}{\, \underset{\ominus}{\overset{\ominus}{|}} \, }

\font\nt=cmr7

\def\note#1
{\marginpar
{\nt $\leftarrow$
\par
\hfuzz=20pt \hbadness=9000 \hyphenpenalty=-100 \exhyphenpenalty=-100
\pretolerance=-1 \tolerance=9999 \doublehyphendemerits=-100000
\finalhyphendemerits=-100000 \baselineskip=6pt
#1}\hfuzz=1pt}

\newcommand{\bignote}[1]{\begin{quote} \sf #1 \end{quote}}

% \long\def\bignote#1{}

% \centerline{NEW MACROS (2019) } 

\newcommand{\lobr}{{\mathrm{lo-br}}}
\newcommand{\sh}{{\mathrm{sh}}}
\newcommand{\sei}{{\mathrm{si}}}
\newcommand{\pe}{{\mathrm{pe}}}

\newcommand{\ol}{\overline}
\newcommand{\ul}{\underline}

\newcommand{\pp}{{\mathfrak{p}}}
\newcommand{\qq}{{\mathfrak{q}}}
\newcommand{\bb}{{\mathfrak{b}}}

\newcommand{\Kfilled}{{\mathcal {K} }}
\newcommand{\Sec} {{{S}}}
\newcommand{\Zec} {{{Z}}}
% \newcommand{\Zay}{{\mathcal{?}}}
% \newcommand{\Hub}{\TT}

% \centerline{OLD MACROS continued} 

%Commands specific to the renormalization operators:

%% %% NEW MACROS (originated in Dec 2020)

\newcommand{\Jul}{{\mathrm{J}}}
\newcommand{\Mandel}{{\mathcal{M}}}
\newcommand{\KK}{{\mathcal T}}
\newcommand{\Bouq}{{B}}

\newcommand{\base}{{\circ}}
\newcommand{\com}{{\mathrm{com}}}
\newcommand{\perb}{{q}}
\newcommand{\lo}{{\mathrm{lo}}}

%%%%%%%%%%%%%%%%%%%%%%%

\newcommand{\QED}{\rlap{$\sqcup$}$\sqcap$\smallskip}

\def\sss{\subsubsection}

\newcommand{\correspond}{\Psi}
\newcommand{\conjugacy}{\psi}

\newcommand{\rank}{\rm rank}
\newcommand{\di}{\partial}
\newcommand{\dibar}{\bar\partial}
\newcommand{\hookra}{\hookrightarrow}
\newcommand{\ra}{\rightarrow}
\newcommand{\hra}{\hookrightarrow}
\newcommand{\imply}{\Rightarrow}
\def\lra{\longrightarrow}
\newcommand{\wc}{\underset{w}{\to}}
\newcommand{\tu}{\textup}

\def\ssk{\smallskip}
\def\msk{\medskip}
\def\bsk{\bigskip}
\def\noi{\noindent}
\def\nin{\noindent}
\def\lqq{\lq\lq}
\def\sm{\setminus}
\def\bolshe{\succ}
\def\ssm{\smallsetminus}
\def\tr{{\text{tr}}}
\def\Crit{{\mathrm{Crit}}}

\newcommand{\ctg}{\operatorname{ctg}}
\newcommand{\diam}{\operatorname{diam}}
\newcommand{\dist}{\operatorname{dist}}
\newcommand{\Hdist}{\operatorname{H-dist}}
\newcommand{\cl}{\operatorname{cl}}
\newcommand{\inter}{\operatorname{int}}
\renewcommand{\mod}{\operatorname{mod}}
\newcommand{\card}{\operatorname{card}}
\newcommand{\tl}{\tilde}
\newcommand{\ind}{ \operatorname{ind} }
\newcommand{\Dist}{\operatorname{Dist}}
\newcommand{\Graph}{\operatorname{Graph}}
\newcommand{\len}{\operatorname{\l}}
\newcommand{\vol}{\operatorname{vol}}

\renewcommand{\Re}{\operatorname{Re}}
\renewcommand{\Im}{\operatorname{Im}}

\newcommand{\orb}{\operatorname{orb}}
\newcommand{\HD}{\operatorname{HD}}
\newcommand{\supp}{\operatorname{supp}}
\newcommand{\id}{\operatorname{id}}
\newcommand{\length}{\operatorname{length}}
\newcommand{\dens}{\operatorname{dens}}
\newcommand{\meas}{\operatorname{meas}}
\newcommand{\area}{\operatorname{area}}
\renewcommand{\Im}{\operatorname{Im}}

\renewcommand{\d}{{\diamond}}

\newcommand{\lef}{{\mathrm{left}}}
\newcommand{\righ}{{\mathrm{right}}}

\newcommand{\Dil}{\operatorname{Dil}}
\newcommand{\Ker}{\operatorname{Ker}}
\newcommand{\tg}{\operatorname{tg}}
\newcommand{\codim}{\operatorname{codim}}
\newcommand{\isom}{\approx}
\newcommand{\comp}{\circ}
\newcommand{\esssup}{\operatorname{ess-sup}}
\newcommand{\Rat}{{\mathrm{Rat}}}
\newcommand{\hot}{{\mathrm{h.o.t.}}}
\newcommand{\Conf}{{\mathrm{Conf}}}

\newcommand{\SLa}{\underset{\La}{\Subset}}

\newcommand{\const}{\mathrm{const}}
\def\loc{{\mathrm{loc}}}
\def\fib{{\mathrm{fib}}}
\def \br{{\mathrm{br}}}

\newcommand{\eps}{{\epsilon}}
\newcommand{\epsi}{{\epsilon}}
\newcommand{\veps}{{\varepsilon}}

\newcommand{\Ga}{{\Gamma}}
\newcommand{\De}{{\Delta}}
\newcommand{\de}{{\delta}}
\newcommand{\la}{{\lambda}}
\newcommand{\La}{{\Lambda}}
\newcommand{\si}{{\sigma}}
\newcommand{\Si}{{\Sigma}}
\newcommand{\Om}{{\Omega}}
\newcommand{\om}{{\omega}}
\newcommand{\Ups}{{\Upsilon}}

\newcommand{\al}{{\alpha}}
\newcommand{\ba}{{\mbox{\boldmath$\alpha$} }}
\newcommand{\be}{{\beta}}
\newcommand{\bbe}{{\mbox{\boldmath$\beta$} }}
\newcommand{\bk}{{\boldsymbol{\kappa}}}
\newcommand{\bg}{{\boldsymbol{\gamma}}}

\newcommand{\bare}{{\bar\eps}}

\newcommand{\Ray}{{\mathcal R}}
\newcommand{\Eq}{{\mathcal E}}
\newcommand{\PR}{PR}

\newcommand{\AAA}{{\mathcal A}}
\newcommand{\BB}{{\mathcal B}}
\newcommand{\CC}{{\mathcal C}}
\newcommand{\DD}{{\mathcal D}}
\newcommand{\EE}{{\mathcal E}}
\newcommand{\EEE}{{\mathcal O}}
\newcommand{\II}{{\mathcal I}}
\newcommand{\FF}{{\mathcal F}}
\newcommand{\GG}{{\mathcal G}}
\newcommand{\JJ}{{\mathcal J}}
\newcommand{\HH}{{\mathcal H}}
\newcommand{\LL}{{\mathcal L}}
\newcommand{\MM}{{\mathcal M}}
\newcommand{\NN}{{\mathcal N}}
\newcommand{\OO}{{\mathcal O}}
\newcommand{\PP}{{\mathcal P}}
\newcommand{\QQ}{{\mathcal Q}}
\newcommand{\QM}{{\mathcal QM}}
\newcommand{\QP}{{\mathcal QP}}
\newcommand{\QL}{{\mathcal Q}}

\newcommand{\RR}{{\mathcal R}}
\newcommand{\SSS}{{\mathcal S}}
\newcommand{\TT}{{\mathcal T}}
\newcommand{\TTT}{{\mathcal P}}
\newcommand{\UU}{{\mathcal U}}
\newcommand{\VV}{{\mathcal V}}
\newcommand{\WW}{{\mathcal W}}
\newcommand{\XX}{{\mathcal X}}
\newcommand{\YY}{{\mathcal Y}}
\newcommand{\ZZ}{{\mathcal Z}}

\newcommand{\AS}{{\mathcal{AS}}}
\newcommand{\SAS}{{\mathcal{SAS}}}

\newcommand{\A}{{\Bbb A}}
\newcommand{\BBB}{{\Bbb B}}
\newcommand{\C}{{\Bbb C}}
\newcommand{\bC}{{\bar{\Bbb C}}}
\newcommand{\D}{{\Bbb D}}
\newcommand{\Hyp}{{\Bbb H}}
\newcommand{\J}{{\Bbb J}}
\newcommand{\Ll}{{\Bbb L}}
\renewcommand{\L}{{\Bbb L}}
\newcommand{\M}{{\Bbb M}}
\newcommand{\N}{{\Bbb N}}
\newcommand{\Q}{{\Bbb Q}}
\newcommand{\R}{{\Bbb R}}
\newcommand{\T}{{\Bbb T}}
\newcommand{\V}{{\Bbb V}}
\newcommand{\U}{{\Bbb U}}
\newcommand{\W}{{\Bbb W}}
\newcommand{\X}{{\Bbb X}}
\newcommand{\Z}{{\Bbb Z}}

\newcommand{\tT}{{\mathrm{T}}}
\newcommand{\tD}{{D}}
\newcommand{\hyp}{{\mathrm{hyp}}}
\newcommand{\cusp}{{\mathrm{cusp}}}

\newcommand{\fix}{{b}}
\newcommand{\cxfix}{{\xi}}
\newcommand{\LINV}{L_{\rm inv}}
\newcommand{\LLINV}{{\mathcal L}_{\rm inv}}
\newcommand{\f}{{\bf f}}
\newcommand{\g}{{\bf g}}
\newcommand{\h}{{\bf h}}
\renewcommand{\i}{{\bar i}}
\renewcommand{\j}{{\bar j}}

\newcommand{\geoD}{{\boldsymbol{\gamma}}}

\newcommand{\Bf}{{\mathbf{f}}}
\newcommand{\Bg}{{\mathbf{g}}}
\newcommand{\Bh}{{\mathbf{h}}}
\newcommand{\Bi}{{\mathbf{i}}}
\def\BJ{{\mathbf{J}}}
\def\Bl{{\mathbf{l}}}
\def\Bm{{\mathbf{m}}}
\def\Bn{{\mathbf{n}}}

\def\Bj{{\mathbf{j}}}
\def\BJ{{\mathbf{J}}}
\def\Bphi{{\mathbf{\Phi}}}
\def\Bpsi{{\mathbf{\Psi}}}
\newcommand\Bom{\boldsymbol{\om}}

\newcommand{\BD}{{\boldsymbol{D}}}
\newcommand{\BE}{{\boldsymbol{E}}}
\newcommand{\BF}{{\boldsymbol{F}}}
\newcommand{\BG}{{\mathbf{G}}}
\def\BH{{\mathbf{H}}}
\newcommand{\BI}{{\boldsymbol{I}}}
\newcommand{\BK}{\mathbf{K}}
\newcommand{\BL}{\mathbf{L}}
\def\B0{{\mathbf{0}}}
\newcommand{\BP}{{\boldsymbol{P}}}
\newcommand{\BS}{{\boldsymbol{S}}}
\def\BT{{\mathbf{T}}}
\newcommand{\BW}{{\mathbf{W}}}
\newcommand{\BV}{{\mathbf{V}}}
\newcommand{\BOM}{{\boldsymbol{\Om}}}

\newcommand{\BPi}{{\boldsymbol{\Pi}}}
\def\BUps{{\boldsymbol{\Upsilon}}}
\def\BLa{{\boldsymbol{\La}}}
\def\BGa{{\boldsymbol\Gamma}}
\def\BDe{{\boldsymbol\Delta}}
\def\BUps{{\boldsymbol\Upsilon}}
\def\BThe{{\boldsymbol\Theta}}
\def\BOm{{\boldsymbol \Om}}
\def\BPsi{{\boldsymbol\Psi}}

\def\Baleph{{\boldsymbol\aleph}}

\newcommand{\Bw}{{\mathbf{w}}}
\newcommand{\Bal}{{\boldsymbol{\alpha}}}
\newcommand{\Bde}{{\boldsymbol{\delta}}}
\newcommand{\Bga}{{\boldsymbol{\gamma}}}
\newcommand{\Bsi}{{\boldsymbol{\sigma}}}
\newcommand{\Bla}{{\boldsymbol{\lambda}}}
\def\Be{\mathbf{e}}
\def\Dia{{\Diamond}}

\def\SB{{\boldsymbol{\BB}}}

\def\ext{{\mathrm{ex}}}
\def\mouth{\operatorname{mouth}}
\def\tail{\operatorname{tail}}

\newcommand{\Comb}{{\it Comb}}
\newcommand{\Top}{{\operatorname{Top}}}
\newcommand{\Bottom}{{\operatorname{Bot}}}
\newcommand{\QC}{\mathcal QC}
\newcommand{\Def}{\mathcal Def}
\newcommand{\Teich}{\mathcal Teich}
\newcommand{\PPL}{{\mathcal P}{\mathcal L}}
\newcommand{\Jac}{\operatorname{Jac}}
\newcommand{\Homeo}{\operatorname{Homeo}}
\newcommand{\AC}{\operatorname{AC}}
\newcommand{\Dom}{\operatorname{Dom}}
\newcommand{\ord}{\operatorname{ord}}

\newcommand{\Hol}{{\rm Hol}}

\newcommand{\Aff}{\operatorname{Aff}}
\newcommand{\Euc}{\operatorname{Euc}}
\newcommand{\MobC}{\operatorname{M\ddot{o}b}({\Bbb C}) }
\newcommand{\PSL}{ {\mathcal{PSL}} }
\newcommand{\SL}{ {\mathcal{SL}} }
\newcommand{\CP}{ {\Bbb{CP}}   }

\newcommand{\hf}{{\hat f}}
\newcommand{\hz}{{\hat z}}
\newcommand{\hM}{{\hat M}}

\renewcommand{\lq}{``}
\renewcommand{\rq}{''}

\newcommand{\Ch}{\textrm{Ch}}
% \newcommand{\Ch}{{\textrm{\cyr CH} }}

%&&&&&&&&&&    Content   &&&&&&&&

\catcode`\@=12

\def\Empty{}
\newcommand\oplabel[1]{
  \def\OpArg{#1} \ifx \OpArg\Empty {} \else
  	\label{#1}
  \fi}
		
%%%%%%%%%%%%%%%%%%%%%%%%%%%%%%%%%%%%%%%%%%%%%%%%%%%%%%%%%%%%%%%%%%%%%
% Insert a postscript figure using psfig.
% Usage:	\realfig{label}{filename}{caption}
%
% uses psfig macros: must have \input{psfig} in the preamble to use
% it. 
%%%%%%%%%%%%%%%%%%%%%%%%%%%%%%%%%%%%%%%%%%%%%%%%%%%%%%%%%%%%%%%%%%%%%

\long\def\realfig#1#2#3#4{
\begin{figure}[htbp]
%%%\centerline{\psfig{figure=#3,height=#2}}
\centerline{\psfig{figure=#2,width=#4}}
\caption[#1]{#3}
\oplabel{#1}
\end{figure}}

%&&&&&&&&&&&&       List of figures              &&&&&&&&&
%
%&&&&&&&&&&&&&&&&&&&&&&&&&&&&&&&&&&&&&&&&&&&&&&&&&&&&&

\newcommand{\comm}[1]{}
\newcommand{\comment}[1]{}

\title[MLC at Feigenbaum points]{MLC at Feigenbaum points}

\author {Dzmitry Dudko and Mikhail Lyubich }

\bigskip\bigskip

%\thanks{
%   This work was supported in part by Sloan Research Fellowship
% and NSF grants DMS-8920768 and DMS-9022140.}
%\date{\today}

\begin{abstract} 
  We prove {\em a priori} bounds for Feigenbaum quadratic polynomials,
  i.e., infinitely renormalizable polynomials $f_c: z\mapsto z^2+c$  of bounded type.
  It implies local connectivity of the corresponding Julia sets
  $J(f_c)$ and  MLC (local connectivity of the Mandelbrot set $\Mandel$) at the
  corresponding parameters~$c$. It also yields the scaling
  Universality, dynamical and parameter, 
  for the corresponding combinatorics.  The MLC Conjecture was open for the most
  classical period-doubling Feigenbaum parameter  as well as for the complex tripling renormalizations.
  Universality for the latter was conjectured by Goldberg-Khanin-Sinai
  in the early 1980s.
\end{abstract}

\setcounter{tocdepth}{1}
 
\maketitle
\tableofcontents

\section{Introduction}
\subsection{Brief history} The MLC Conjecture on the local connectivity of the Mandelbrot set
$\Mandel$,
put forward by Douady and Hubbard in the mid 1980s \cite{DH:Orsay},  is the central problem
of contemporary Holomorphic Dynamics. It would imply a precise
topological model for $\Mandel$ and
the Fatou Conjecture on the density of hyperbolic maps in $\Mandel$,
and it is intimately related to the Mostow-Thurston Rigidity phenomenon
in 3D Hyperbolic Geometry.

Around 1990 Yoccoz proved MLC at any parameter $c\in \Mandel$ that is
not infinitely renormalizable (in the quadratic-like sense)
thus linking  the problem tightly to the Quadratic-like
Renormalization Theory
(see \cite{H,M}).
First advances in this direction appeared in \cite{puzzle}, where MLC
was established for infinitely renormalizable parameters of  {\em high type}
and the general problem was reduced, under some circumstances,  to the problem of
{\em a priori} bounds.

Quadratic-like renormalization appears in two flavors, {\em primitive}
and {\em satellite}. The above results were concerned with the
primitive case. Next breakthrough in this direction appeared in the
work of Jeremy Kahn \cite{K} who established {\em a priori} bounds,
and hence MLC, for all infinitely renormalizable parameters of  {\em bounded
primitive} type (using the Covering Lemma \cite{covering lemma}).
It followed up with the work \cite{decorations,molecules}
handling the {\em definitely primitive case}. 

It took more than  10 years to prove MLC at  some infinitely renormalizable parameters of
{\em bounded satellite} type \cite {DL} (based upon the Pacman
Renormalization Theory developed in \cite{DLS}).  However, the most
prominent parameter, the {\em period-doubling Feigenbaum point} corresponding to the
cascade of doubling renormalizations (see Figure~\ref{Fig:FeigPar}) was not covered by this result.
In this paper we are filling in this gap by proving MLC at the Feigenbaum
point, and in fact, at {\em all infinitely renormalizable parameters of bounded type} (for which we will still preserve the same name). 

\begin{rem} The universality of the period-doubling cascade was discovered in the mid 1970s by Feigenbaum~\cite{F1,F2} (the parameter part, see Figure~\ref{Fig:FeigPar}) and independently by Coullet and Tresser~\cite{TC,CT} (the dynamical part). It is intimately related to the renormalization phenomenon in the Quantum Field Theory and Statistical Mechanics; the discovery opened up a new universality paradigm in Dynamical Systems.
\end{rem}

\begin{figure}
\begin{tikzpicture}%background rectangle/.style={fill=green}]

\filldraw[green] (-3,-4) -- (6.95,-4) --(6.95,4.4) --(-3,4.4);
\begin{scope}[shift={(1,2)}]
  \node[scale=0.95] at(0,0){ \includegraphics[width=5cm]{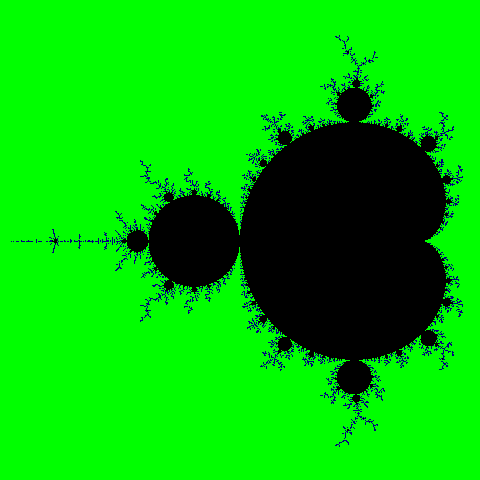}};
  
\draw[scale=0.95][dashed,red, line width=0.3mm] (-1.2,0) circle (0.4cm);
  \coordinate  (a1) at (-1.2,-0.352);
  
\end{scope}

\begin{scope}[shift={(-3,-4)},scale=0.88,every node/.style={scale=0.88}, scale=2.25]
 \node at (1,1) {\includegraphics[width=4.5cm]{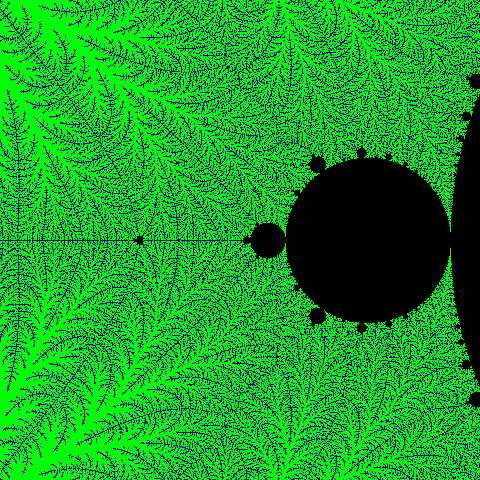}};
 % \draw[red, line width=0.5mm] (0, 0) --(0, 2) --(2, 2) --(2, 0)--(0,0);
  
  \draw[red, dashed, line width=0.3mm] (a1)--(1,2); 

  \draw[red, dashed, line width=0.3mm,shift={(0.05,0)}] (0.8, 0.8) --(0.8, 1.2) --(1.2, 1.2) --(1.2,0.8)--(0.8,0.8);

\coordinate  (w1) at (1.25,1.15);

\end{scope}

\begin{scope}[shift={(3,-4)},scale=0.88,every node/.style={scale=0.88},scale=2.25]
  \draw[red, dashed,line width=0.3mm] (w1)--(0,1.5); 
 \node at (1,1) {\includegraphics[width=4.5cm]{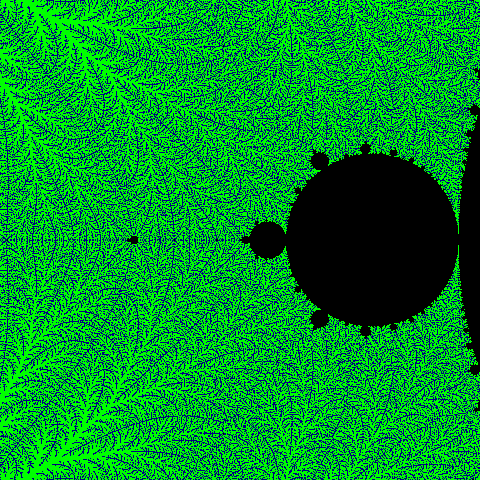}};
 % \draw[red, line width=0.5mm] (0, 0) --(0, 2) --(2, 2) --(2, 0)--(0,0);
\end{scope}

 \end{tikzpicture}
 \caption{Self-similarity of the Mandelbrot set at the period-doubling Feigenbaum parameter: $\MM$ scales almost linearly under $w\mapsto 4.6692\dots w+o(|w|^{1+\varepsilon})$.}
 \label{Fig:FeigPar}
\end{figure}

\subsection{Statement of the  main result and its consequences} A {\em Feigenbaum map} is an infinitely renormalizable  quadratic-like map $f: U\ra V$ with bounded combinatorics,
i.e., all renormalization periods of $f$ are bounded by some $\bar p$
(see \S \ref{sss:infin renorm} for precise definitions).
One says that such an $f$ has {\em a priori} bounds if
its quadratic-like renormalizations $\RR^n f : U^n\ra V^n$ can be selected so that
\[\mod (V^n\sm U^n)\geq \mu>0.\] The bounds are called {\em beau} if $\mu$ depends only on the
combinatorial bound $\bar p$ as long as $n$ is big enough (depending on $\mod (V\sm U)$).

\begin{figure}
\begin{tikzpicture}
\filldraw[green] (-3,-0.4) -- (6.95,-0.4) --(6.95,7.96) --(-3,7.96);

\begin{scope}[shift={(1,2)}]
  \node[scale=0.95] at(0,0){ \includegraphics[width=5cm]{1.png}};

  \begin{scope}[shift={(2.45,1.45)}]
\draw[scale=0.95][dashed,red, line width=0.15mm] (-1.2,0) circle (0.2cm);
  \coordinate  (a1) at (-1.21,0.17);
  
\end{scope}  
  
\end{scope}

\begin{scope}[shift={(-3,4)},scale=0.88,every node/.style={scale=0.88}, scale=2.25]
 \node at (1,1) {\includegraphics[width=4.5cm]{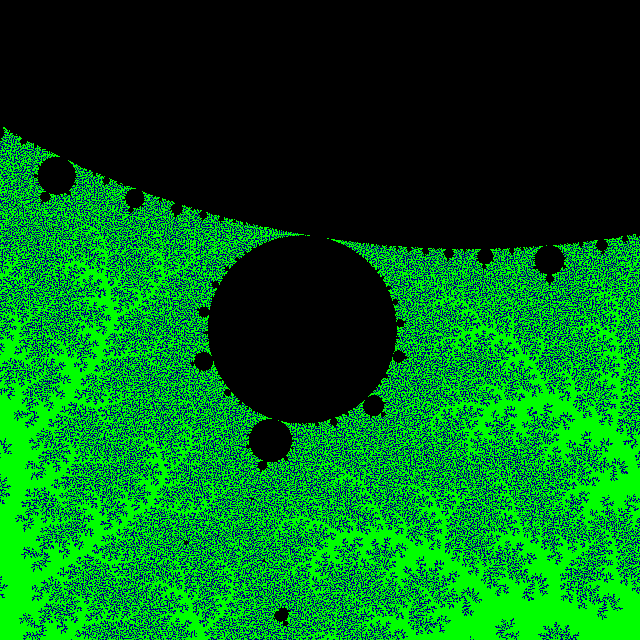}};
  %\draw[red, line width=0.5mm] (0, 0) --(0, 2) --(2, 2) --(2, 0)--(0,0);
  
  \draw[red,dashed, line width=0.3mm] (a1)--(2,0.3);

\begin{scope}[shift={(-0.30,-0.12)}]
  \draw[red, dashed, line width=0.3mm,shift={(-0.05,0.02)}] (0.8, 0.8) --(0.8, 1.12) --(1.12, 1.12) --(1.12,0.8)--(0.8,0.8);
\coordinate  (w1) at (1.08,1.08);
\end{scope}

\end{scope}

\begin{scope}[shift={(3,4)},scale=0.88,every node/.style={scale=0.88},scale=2.25]
  \draw[red, dashed,line width=0.3mm] (w1)--(0,1.5); 
 \node at (1,1) {\includegraphics[width=4.5cm]{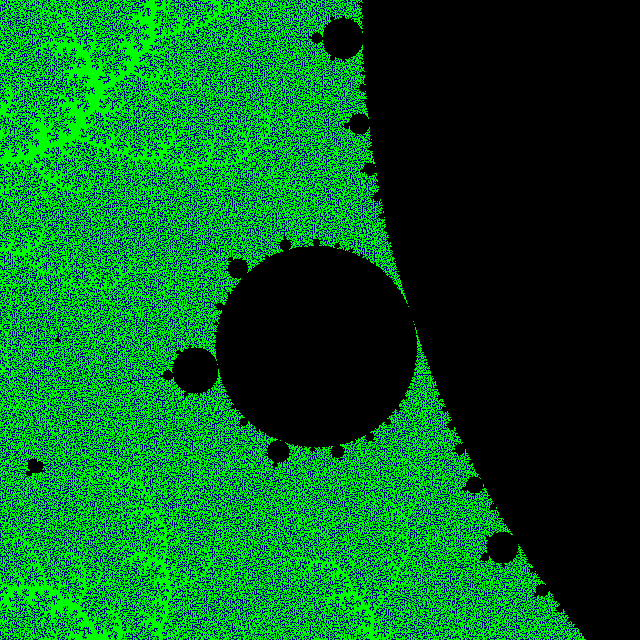}};
 % \draw[red, line width=0.5mm] (0, 0) --(0, 2) --(2, 2) --(2, 0)--(0,0);
\end{scope}

 \end{tikzpicture}
 \caption{Illustration to the Goldberg-Khanin-Sinai Conjecture: the Mandelbrot set scales almost linearly at the parameter of the period-tripling bifurcation. (The first zoom magnifies a deep renormalization level.)}
 \label{Fig:conj:GSK}
\end{figure}

\begin{mthm}
\label{mainthm:A}
Any Feigenbaum quadratic-like map has a priori beau bounds.
\end{mthm}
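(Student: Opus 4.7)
The plan is to combine Kahn's a priori bounds for primitive renormalizations with a pacman-renormalization analysis of satellite cascades, gluing the two regimes through a geometric framework of hubs $\Hub$ and bushes $\bush$. First I would stratify the renormalization sequence $\{\RR^n f\}$ into maximal primitive and satellite stretches. Since the combinatorial bound $\bar p$ is universal, each level admits only finitely many combinatorial types, so this stratification is well defined and has bounded complexity at every level, and the proof reduces to producing uniform moduli along each stretch together with a transfer mechanism between stretches.

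On a primitive stretch the existing Kahn--Lyubich theory, via the Covering Lemma, already gives beau bounds: starting from any $\mod(V^n\sm U^n)\geq \mu>0$, after a bounded number of primitive renormalizations the modulus stabilizes at a lower bound depending only on $\bar p$. On a satellite stretch of fixed period $p\leq \bar p$, I would pass to the pacman renormalization operator $\RR^{\mathrm{pac}}_p$ of Dudko--Lyubich--Schleicher, which acts on an appropriate space of parabolic-like pacmen. Its hyperbolicity at the Feigenbaum fixed point, with a single unstable direction transverse to the satellite bifurcation, forces the orbit of any bounded-type pacman to stay close to the attracting set of $\RR^{\mathrm{pac}}_p$, producing uniform moduli for the corresponding satellite levels and covering in particular the classical period-doubling and complex tripling Feigenbaum points.

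The main obstacle, and I expect the technical core of the paper, is the transition between the two regimes. When a primitive stretch is followed by a satellite stretch (or the other way around), one must transfer a beau modulus bound across the transition without losing a definite fraction of it. The proposed mechanism is to attach at each transition a valuable hub $\Hub$, together with its family of bushes $\bush$, built from the postcritical set and designed to be simultaneously compatible with both the quadratic-like and the pacman structures. Uniform extremal-length/area estimates on $\Hub$ and $\bush$ should then let one extract a quadratic-like renormalization of definite modulus out of a pacman of definite modulus, and conversely prepare a pacman of definite modulus from a quadratic-like map of definite modulus. Iterating this transfer across arbitrarily many alternating stretches, and combining it with the two regime-specific bounds above, will yield the beau bounds for every Feigenbaum quadratic-like map and hence Theorem~\ref{mainthm:A}.
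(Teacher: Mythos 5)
There is a genuine gap: the satellite-stretch half of your plan is circular. You propose to invoke the hyperbolicity of the pacman renormalization operator $\RR^{\mathrm{pac}}_p$ at the Feigenbaum fixed point to force renormalization orbits to stay near an attracting set and thereby extract uniform moduli. But hyperbolicity of the renormalization operator on bounded-type classes is precisely the Renormalization Conjecture (Theorem~C in this paper), which is itself \emph{deduced from} a priori bounds (via the results of~\cite{L:universality,S,McM2,AL}); it cannot be used as an independent input to prove them. Moreover the paper explicitly notes that the pacman-renormalization approach of~\cite{DL,DLS} did \emph{not} cover the period-doubling Feigenbaum point or the tripling case, so importing that machinery as-is would not fill the gap even if the circularity were resolved.

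The paper's actual route is entirely different and does not stratify the cascade into primitive and satellite stretches with a transfer mechanism. Instead it works with a single uniform object at every level: the bush $\bush_f=\Hub_f\cup\bfilled^{[1]}$, and a $\psi^\bullet$-ql renormalization defined by extending along all curves in $V\setminus\bbush^{[n]}$. The key estimate is a refined pull-off dichotomy (Theorem~\ref{thm:prim pull-off}): for each level $n$, either the vertical part of the degeneration is comparable to the total, in which case the Covering Lemma and Quasi-Additivity give $\Width_\bullet(f)\ge 2\Width_\bullet(f_n)$ and a ``Record Argument'' forbids unbounded growth; or else most of the horizontal degeneration sits in a $\delta$-almost periodic rectangle between neighboring bushes of a satellite flower. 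In the latter case, a spiraling/non-spiraling analysis (Theorem~\ref{thm:sat pull off}) combined with the Gr\"otzsch inequality and the Wave Lemma (Lemma~\ref{lem:Wave}) shows that either a shallower level or a deeper level must be far more degenerate, with a multiplicative gain $C_\delta\to\infty$ as $\delta\to 0$. The contradiction is then closed not by any hyperbolicity statement but by the Teichm\"uller contraction of Proposition~\ref{prop:Teichm contr} (a Sullivan-type fact, needing no a priori bounds), which says $\Width_\bullet(f_n)=O(\Delta^n)$: geometric growth at rate $\Delta_{\bar p}$ is incompatible with the forced growth at rate $C_\delta\gg\Delta_{\bar p}$. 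Finally Theorem~\ref{thm:psi bounds} converts the resulting $\psi^\bullet$-bounds into honest ql bounds. Your hubs-and-bushes intuition for the interface is pointing in the right direction (bushes really are the central object), but they serve here as the separating sets for the $\psi^\bullet$-renormalization and its WAD calculus, not as a bridge between a ql regime and a pacman regime.
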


Together with the Rigidity Theorem of \cite{puzzle}, it implies MLC at the corresponding parameters:
 
\begin{mthm}
\label{mainthm:B}
The Mandelbrot set is locally connected at any Feigenbaum parameter. 
\end{mthm}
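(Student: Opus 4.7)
The plan is to derive Theorem~B as an essentially immediate consequence of Theorem~A combined with the Rigidity Theorem of~\cite{puzzle}. Fix a Feigenbaum parameter $c\in\Mandel$. Theorem~A supplies a nested sequence of quadratic-like renormalizations $\RR^n f_c : U^n\ra V^n$ with $\mod(V^n\sm U^n)\geq \mu > 0$; this is precisely the \emph{a priori} bound hypothesis of the Rigidity Theorem of~\cite{puzzle}, which asserts that local connectivity of $\Mandel$ holds at every infinitely renormalizable parameter whose associated quadratic map carries \emph{a priori} bounds. Applying that theorem to~$f_c$ yields Theorem~B.

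Unpacking the citation, the Rigidity Theorem proceeds in two stages. In phase space, \emph{a priori} bounds together with the Kahn--Lyubich pull-back and quasiconformal surgery arguments promote combinatorial equivalence to combinatorial rigidity: any $f_{c'}$ sharing the renormalization tableau of $f_c$ is affinely conjugate to~$f_c$, so $c' = c$. In parameter space, the Douady--Hubbard para-puzzle (at primitive renormalization levels) together with the pacman para-puzzle of~\cite{DLS,DL} (at satellite levels, as appear in the classical period-doubling and period-tripling cascades) produce a nested basis of connected parameter puzzle neighborhoods of $c$ whose intersection equals the combinatorial class of~$c$. Phase-space rigidity forces this intersection to collapse to $\{c\}$, which is exactly local connectivity of $\Mandel$ at~$c$.

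The main obstacle in the route from Theorem~A to Theorem~B is therefore not present at this stage: once \emph{a priori} beau bounds are available, MLC at the corresponding parameter follows from existing rigidity and para-puzzle technology, with the satellite para-puzzle of~\cite{DL,DLS} as the only ingredient needed beyond~\cite{puzzle} in order to cover all bounded-type combinatorics. The genuine analytic content is concentrated entirely in Theorem~A, whose proof is the business of the rest of the paper.
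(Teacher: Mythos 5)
The core deduction matches the paper exactly: the paper derives Theorem~B in a single sentence, invoking Theorem~A together with the Rigidity Theorem of~\cite{puzzle}, and your first paragraph reproduces that. The problem is in your ``unpacking,'' which misrepresents what is inside the cited theorem. The Rigidity Theorem of~\cite{puzzle} predates the Kahn--Lyubich covering lemma, so there is no ``Kahn--Lyubich pull-back'' in its proof; it rests on a Thurston-type iterated quasiconformal pull-back combined with linear growth of moduli from the a priori bounds.

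More consequentially, you assert that the pacman para-puzzle of~\cite{DLS,DL} is ``the only ingredient needed beyond~\cite{puzzle} in order to cover all bounded-type combinatorics.'' The paper does not claim this and cites only~\cite{puzzle}: the Rigidity Theorem there already yields MLC at any infinitely renormalizable parameter with a priori bounds satisfying the secondary-limbs condition, and bounded type automatically satisfies that condition (both for primitive and satellite combinatorics). Leaning on~\cite{DL,DLS} is not merely superfluous; it risks looking circular, since~\cite{DL} is the satellite MLC result the introduction explicitly says does \emph{not} cover the Feigenbaum point, and the pacman renormalization framework of~\cite{DLS} is a separate theory not used in this paper's deduction of Theorem~B. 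If you strip the ``unpacking'' back to ``apply the Rigidity Theorem of~\cite{puzzle} with the secondary-limbs condition supplied by bounded type,'' the argument is correct and coincides with the paper's.
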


The {\em Renormalization Conjecture} asserts that for any combinatorial bound  $\bar p$ the renormalization transformation $\RR$
has a renormalization horseshoe $\AAA_{\bar p}$ of Feigenbaum maps of type  $\bar p$ on which it acts hyperbolically
with one-dimensional unstable foliation. Together with \cite{L:universality} (see also \cite{S,McM2,AL}) we obtain:

\begin{mthm}
\label{mainthm:C}
For any combinatorial bound $\bar p$,
 the Renormalization Conjecture is valid in the space of quadratic-like maps. 
\end{mthm}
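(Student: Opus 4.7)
The plan is to feed the a priori beau bounds from Theorem~\ref{mainthm:A} into the hyperbolicity machinery for renormalization developed in \cite{L:universality}, with alternative routes via \cite{S,McM2,AL}. Fix a combinatorial bound $\bar p$ and work in the space $\mathcal{Q}$ of normalized quadratic-like germs (modulo affine conjugacy) equipped with the Carath\'eodory topology. Theorem~\ref{mainthm:A} produces a modulus $\mu=\mu(\bar p)>0$ such that every Feigenbaum germ whose combinatorics are bounded by $\bar p$ admits renormalizations $\RR^n f: U^n\to V^n$ with $\mod(V^n\setminus U^n)\geq\mu$ for all $n$ sufficiently large; standard compactness of quadratic-like germs with a uniform lower modulus then places the entire forward $\RR$-orbit of any such germ in a precompact part of $\mathcal{Q}$.

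The horseshoe $\AAA_{\bar p}$ is defined as the set of germs in $\mathcal{Q}$ admitting a bi-infinite $\RR$-orbit with combinatorics bounded by $\bar p$. By the compactness just established this set is $\RR$-invariant and compact, and the natural symbolic coding provides a continuous map from the subshift of admissible sequences over the finite alphabet of combinatorial types of period at most $\bar p$. Injectivity of the coding follows from the Rigidity Theorem of \cite{puzzle} combined with Theorem~\ref{mainthm:B}: two germs with identical bi-infinite combinatorics must be hybrid equivalent, and local connectivity of the corresponding Julia sets promotes this hybrid equivalence to a genuine conjugacy.

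The heart of the matter is hyperbolicity of $\RR$ on $\AAA_{\bar p}$. I would invoke \cite{L:universality}: once beau a priori bounds are available, the hybrid equivalence classes cut out a codimension-one holomorphic lamination of $\mathcal{Q}$, and one shows (i)~$\RR$ contracts along hybrid leaves with a rate depending only on $\mu$ (this comes from exponential decay of Beltrami differentials pushed into deep renormalizations, controlled by the uniform lower modulus), and (ii)~transverse to the hybrid lamination $\RR$ is expanding with a one-dimensional unstable direction, tangent to the parameter slice of the ambient Mandelbrot-like family. The one-dimensionality of the unstable direction reflects the classical fact that quadratic-like families have a one-dimensional parameter transverse to hybrid conjugacy; expansion in that direction follows from McMullen's rigidity together with the small-orbits argument of \cite{L:universality}.

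The main obstacle is precisely Theorem~\ref{mainthm:A} itself: without uniform lower moduli one cannot even remain in a precompact part of $\mathcal{Q}$, and the hybrid-lamination technology has nothing to act on. Once beau bounds are available for every combinatorial bound $\bar p$, the remaining steps are by now standard; the only delicate point is to verify that the constants in the hyperbolicity estimates of \cite{L:universality} (and the alternative arguments of \cite{S,McM2,AL}) depend only on $\mu$ and $\bar p$, which is exactly why the \emph{beau} character of the bounds in Theorem~\ref{mainthm:A}, and not merely their existence, is what enables the uniform horseshoe picture across the full family of types bounded by $\bar p$.
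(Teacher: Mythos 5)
Your sketch follows exactly the route the paper indicates: Theorem~\ref{mainthm:A} supplies the a priori beau bounds, and the renormalization-hyperbolicity machinery of \cite{L:universality} (with alternatives in \cite{S,McM2,AL}) then yields the full Renormalization Conjecture, with the compactness/horseshoe/hybrid-lamination picture filled in as you describe. The paper treats Theorem~\ref{mainthm:C} as an immediate corollary and gives no further argument, so your elaboration is consistent with its proof; the only minor imprecision is that the promotion of hybrid equivalence to conformal conjugacy in the horseshoe rests on the exponential contraction along hybrid classes (McMullen's tower rigidity) rather than on local connectivity of Julia sets per se.
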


Let us consider the set $\II_{\bar p}$  of Feigenbaum parameters $c\in \Mandel$ of type $\bar p$.
Theorem~\ref{mainthm:B} implies that each $c\in \II_{\bar p}$ is the intersection of the nest $\MM^n(c)$ of little
$\MM$-copies corresponding to the  $n$-fold renormalizations of $f_c$.  
Let us say that Mandelbrot set is \emph{self-similar} over $\II_{\bar p}$ if
the straightening map $\chi:\MM^{n+1} (c) \ra \MM^n(\chi(c)) $  (see \S \ref{renorm sec})
is $C^{1+\de}$-conformal at any  $c\in \II_{\bar p}$
(with uniform constants).

The Renormalization Conjecture (Theorem~\ref{mainthm:C}) implies: 

\newtheorem*{Theorem D}{Theorem D}
\begin{mthm}
  For any $\bar p$, the Mandelbrot set is self-similar over $\II_{\bar p}$.
\end{mthm}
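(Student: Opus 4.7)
The plan is to deduce Theorem~D from Theorem~C together with the classical $C^{1+\de}$ regularity theory for invariant laminations of hyperbolic dynamical systems. Theorem~C furnishes a renormalization horseshoe $\AAA_{\bar p}$ with a one-dimensional unstable foliation and codimension-one stable foliation in the space of quadratic-like germs. Every $c \in \II_{\bar p}$ corresponds to a map $f_c$ lying on the stable lamination $W^s(\AAA_{\bar p})$, and its $\RR$-orbit converges exponentially to the horseshoe.

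The first step is to express the straightening $\chi:\MM^{n+1}(c)\to\MM^n(\chi(c))$ as a parameter-space shadow of one step of renormalization. The little copy $\MM^n(c)$ parameterizes the $n$-fold renormalizations of nearby quadratic polynomials; viewed inside the germ space it is a small disk transverse to $W^s(\AAA_{\bar p})$ near $\RR^n f_c$. Under this identification, $\chi$ becomes the unstable holonomy from the transversal near $\RR^{n+1} f_c$ to the one near $\RR^n f_c$. Validity of the identification requires the Lyubich-McMullen transversality of the quadratic family $\{f_c\}$ to every stable leaf $W^s(f)$, $f\in\AAA_{\bar p}$; under the beau bounds of Theorem~A this transversality holds uniformly over $\II_{\bar p}$.

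The second step is to transfer regularity from the horseshoe to parameter space. Hyperbolicity of $\RR$ on $\AAA_{\bar p}$ with one-dimensional unstable bundle implies, by standard invariant-manifold theory, that the unstable lamination is $C^{1+\de}$ and its holonomy between smooth transversals is $C^{1+\de}$; since the unstable direction is one-dimensional, this holonomy is automatically $C^{1+\de}$-conformal. Composing with the transversal charts identifying pieces of the quadratic family with local unstable leaves, $\chi$ inherits $C^{1+\de}$-conformality at $c$, with constants depending only on $\bar p$. When $c$ lies on the stable leaf of an $\RR$-periodic point of period $k$, the derivative $D\chi(c)$ equals the unstable multiplier of $\RR^k$ at that point, which recovers the Feigenbaum-type scaling constants such as $4.6692\dots$ of Figure~\ref{Fig:FeigPar}.

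The main obstacle is making the phase-parameter correspondence quantitative: one must show that $\chi$ differs from a genuine unstable holonomy by an error that is $o(|w|^{1+\de})$ as the copies $\MM^n(c)$ shrink. This is where the beau bounds of Theorem~A and the exponential stable contraction from Theorem~C are both essential: the former provides sharp geometric control of $\MM^n(c)$ (together with the rigidity machinery of \cite{puzzle}), while the latter ensures that the non-linear contributions coming from the drift $\RR^n f_c\to\AAA_{\bar p}$ decay at the required Hölder rate.
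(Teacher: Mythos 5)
The paper supplies no argument for Theorem~D beyond the one-line assertion that it follows from Theorem~C, deferring the actual mechanism to \cite{L:universality} (and, for the complex extension, to \cite{AL,McM2}). Your sketch is a reconstruction of that mechanism, so at the level of strategy there is no divergence from the paper.

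There is, however, a genuine gap in one step. You claim that because the unstable bundle is one (complex) dimensional, the $C^{1+\de}$-regular holonomy between one-dimensional transversals is ``automatically'' $C^{1+\de}$-conformal. This does not follow: $C^{1+\de}$ \emph{real} regularity of a map between complex one-dimensional transversals gives no control whatsoever on the complex-linearity of its derivative, which is what conformality requires. Conformality must be extracted from the holomorphic structure of $\RR$ itself: the relevant holonomy is approximated, with exponentially decaying error, by iterated pushforwards under $\RR$ (each holomorphic), and it is this asymptotic approximation — the content of the parameter-universality machinery in \cite{L:universality} and of McMullen's deep-limit theory \cite{McM1,McM2} — that produces $C^{1+\de}$-conformality, not the one-dimensionality per se. Relatedly, the holonomy entering $\chi$ is that of the \emph{stable} foliation (the straightening projects onto the quadratic family along hybrid classes), even though the resulting scaling factor is the unstable multiplier of $\RR$ on the horseshoe. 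Your final paragraph correctly isolates the remaining quantitative issue (that the deviation from a linear model is $o(|w|^{1+\de})$ as the copies shrink) but then leaves it unaddressed; this estimate is precisely what the cited works supply, and it cannot simply be asserted from ``beau bounds plus exponential stable contraction'' without invoking or re-deriving their argument.
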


For the {\em satellite triplings}, the last two  theorems confirm the
Goldberg-Sinai-Khanin Conjecture \cite{GSK} from the 1980s, see Figure~\ref{Fig:conj:GSK}.

\subsection{Rough outline of the proof} The historical developments of ideas behind the proof are summarized in~\cite{L21}. 
 For a ql map $f\colon U\to V$, its width is
\[\Width(f) = \Width(V\setminus \filled_f)=\frac{1}{\mod (V\setminus \filled_f)} ,\]
 where $\filled_f$ is the filled Julia set of $f$. Our goal is to bound $\Width(f)$ from above. We will first review the main ideas for the bounded primitive case, then we will discuss how to complete the argument in the bounded satellite case.

\begin{figure}[t!]
\[\begin{tikzpicture}[]

\draw[fill=red,opacity=0, fill opacity=0.2]
(0.2,-0.2)--(0.2,0.2) -- (5.8,0.2)--(5.8,-0.2);

\node[red,] at (3,0.5) {$\FamG_1$};

\draw[fill=blue,opacity=0, fill opacity=0.2]
(6.2,-0.2)--(6.2,0.2) -- (8.8,0.2)--(8.8,-0.2);

\node[blue] at (7.5,0.5) {$\FamG_2$};

\begin{scope}[rotate =0]
\draw[line width=0.8mm] (-0.5,0)--(0.5,0); 
\draw[line width=0.8mm] (-0.2,-0.2)--(-0.2,0.2);
\draw[line width=0.8mm] (0.2,-0.2)--(0.2,0.2);
  \end{scope}

\begin{scope}[shift={(6,0)}]
\draw[line width=0.8mm] (-0.5,0)--(0.5,0); 
\draw[line width=0.8mm] (-0.2,-0.2)--(-0.2,0.2);
\draw[line width=0.8mm] (0.2,-0.2)--(0.2,0.2);
  \end{scope}
  
  \begin{scope}[shift={(9,0)}]
\draw[line width=0.8mm] (-0.5,0)--(0.5,0); 
\draw[line width=0.8mm] (-0.2,-0.2)--(-0.2,0.2);
\draw[line width=0.8mm] (0.2,-0.2)--(0.2,0.2);
  \end{scope}

\begin{scope}[shift={(0,-3)}]

\draw[fill=blue,opacity=0, fill opacity=0.2]
(0.2,-0.2)--(0.2,0.2) -- (2.8,0.2)--(2.8,-0.2);

\draw[fill=red,opacity=0, fill opacity=0.2]
(6.2,-0.2)--(6.2,0.2) -- (8.8,0.2)--(8.8,-0.2);

\draw[fill=red,opacity=0, fill opacity=0.2]
(3.2,-0.2)--(3.2,0.2) -- (5.8,0.2)--(5.8,-0.2);

\draw[line width=0.3mm] (4.1,0.5) edge[bend left=10,->] (3,2.5); 

\node[left,red] at (4.8,0.5) {$\FamG'_1$};

\draw[line width=0.3mm] (7.5,0.5) edge[bend left=15,->] (3.5,2.5); 

\node[right,red] at (7.7,0.5) {$\FamG''_1$};

\draw[line width=0.3mm] (1.5,0.5) edge[,->] (7.5,2.5); 
\node[blue,left] at (1.3,0.5) {$\FamG'_2$};

\begin{scope}[rotate =0]
\draw[line width=0.8mm] (-0.5,0)--(0.5,0); 
\draw[line width=0.8mm] (-0.2,-0.2)--(-0.2,0.2);
\draw[line width=0.8mm] (0.2,-0.2)--(0.2,0.2);
  \end{scope}

\begin{scope}[shift={(3,0)},gray]
\draw[line width=0.8mm] (-0.5,0)--(0.5,0); 
\draw[line width=0.8mm] (-0.2,-0.2)--(-0.2,0.2);
\draw[line width=0.8mm] (0.2,-0.2)--(0.2,0.2);
  \end{scope}

\begin{scope}[shift={(6,0)}]
\draw[line width=0.8mm] (-0.5,0)--(0.5,0); 
\draw[line width=0.8mm] (-0.2,-0.2)--(-0.2,0.2);
\draw[line width=0.8mm] (0.2,-0.2)--(0.2,0.2);
  \end{scope}
  
  \begin{scope}[shift={(9,0)}]
\draw[line width=0.8mm] (-0.5,0)--(0.5,0); 
\draw[line width=0.8mm] (-0.2,-0.2)--(-0.2,0.2);
\draw[line width=0.8mm] (0.2,-0.2)--(0.2,0.2);
  \end{scope}

\end{scope}
\end{tikzpicture}\]

\caption{Primitive pull-off in the airplane combinatorics. If $\Width_\loc^\ver(\filled_j) \ll \Width_\loc(\filled_j)$, then most curves of the horizontal rectangles $\FamG_1,\FamG_2$ overflow their preimages $\FamG'_1,\FamG'_2,\FamG''_1$ as illustrated. Such a configuration is impossible by the Gr\"otzsch inequality;~i.e., \eqref{eq:outline:3} holds. Peripheral preimages are omitted.}
\label{Fg:Pull off:Air Comb}
\end{figure}
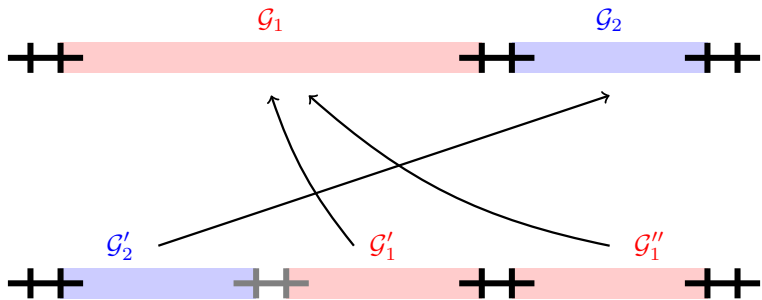
\subsubsection{Pull-off in the primitive case \cite{K,covering lemma}}
\label{sss:intro:PrimPullOff}
Consider the $n$th renormalization $f_n=\RR^n f$ of $f=f_0$, and let $\filled_0, \filled_1,\dots ,\filled_{p-1}$ be the periodic cycle of little filled Julia sets of $f_n$ in the dynamical plane of $f_0\colon U\to V$. Assuming that the renormalization is primitive, all $\filled_i$ are pairwise disjoint.

Consider the thin-thick decomposition of $V'\coloneqq V\setminus \bigcup_i \filled_i\,$: there are finitely many pairwise disjoint rectangles $\FamG_i$ between components of $\partial V'$ such that, up to $O_p(1)$ of width, the $\FamG_i$ contain all non-peripheral curves in $V'$ with endpoints in $\partial V'$, see Figure~\ref{Fg:TTD} for an illustration of a thin-thick decomposition. A rectangle $\FamG_i$ is called 
\begin{itemize}
\item \emph{vertical} if it connects the outer boundary $\partial^\out V'\coloneqq \partial V$ of $V'$ to one of its inner boundary components;
\item \emph{horizontal} if it connects two inner (i.e., non-outer) boundary components of $V'$.
\end{itemize}

Below we assume that $\Width(f_n)\gg_p 1$. We have:
\begin{equation}
\label{eq:outline:1}
\Width(f)\ge \sum_{\text{ vertical }\FamG_i} \Width(\FamG_i) 
\end{equation} 
and, for every $\filled_j$:  
\begin{equation}
\label{eq:outline:2}
\Width(f_n) \asymp  \Width_\loc(\filled_j) \  \coloneqq
\sum_{\substack{ \FamG_i \text{ adjacent}\\
        \text{to }\filled_j }}\Width(\FamG_i) +O_p(1), 
\end{equation}
i.e.~the sum in~\eqref{eq:outline:2} is taken over all rectangles
$\FamG_i$ adjacent to $\filled_j$.
(A ``$\psi$-ql renormalization''  is used to
achieve~\eqref{eq:outline:2}, see \S \ref{psi-renorm},~\ref{sss:WADs}, and~\eqref{eq:Width is Loc WAD}.) 
 Set also
\begin{equation}
\Width_\loc^\ver(\filled_j)\coloneqq \sum_{\substack{ \text{vertical }\FamG_i \\ \text{adjacent to }\filled_j   }}\Width(\FamG_i).
\end{equation}

A fundamental fact is that the horizontal $\FamG_i$ are eventually
(after several restrictions of the domain)
aligned with the Hubbard tree $T_f$ of $f$ as shown on~Figure~\ref{Fg:Pull off:Air Comb}.
Applying the Gr\"otzsch inequality to the horizontal rectangles and their pullbacks
(see the caption of Figure~\ref{Fg:Pull off:Air Comb}),
we experience a definite loss of the horizontal weight in favor of the vertical one.
The created definite vertical weight can then be pushed forward by means of the
Covering Lemma \cite{covering lemma}. Since all the local weights are comparable\footnote{This observation goes back to McMullen
  \cite[Theorem 9.3]{McM1}.}, 
we obtain:
\begin{equation}
\label{eq:outline:3}
\Width_\loc^\ver(\filled_j) \asymp \Width_\loc(\filled_j) \sp\sp\sp\text{ for every }\filled_j,
\end{equation}
where ``$\asymp$'' is independent of $p$.

We stress that a key combinatorial ingredient used for~\eqref{eq:outline:3} is the absence of 
periodic horizontal rectangles; i.e. a rectangle that has an iterated lift homotopic to itself rel small Julia sets, see~\S\ref{ss:intro:SatCase},~\S\ref{sss:InvArcDiagr}, \S\ref{sss:PeriodRect}.

Combining ~\eqref{eq:outline:1}, \eqref{eq:outline:3}, \eqref{eq:outline:2},  and assuming $p\gg 1$, we obtain:
\begin{equation}
\label{eq:intro:comp}
{
\begin{array}{c}
\Width(f)\ge \sum_{\text{ vertical }\FamG_i} \Width(\FamG_i)\ = \ \sum_{j=1}^p  \Width_\loc^\ver(\filled_j)\ \asymp \\ \\  \sum_{j=1}^p  \Width_\loc(\filled_j)\ \asymp p\  \Width(f_n)\ \gg\  2 \Width(f_n);
\end{array}
}\end{equation}
i.e:
\begin{equation}
\label{eq:W f vs W f_n}
\Width(f) \ge 2 \Width(f_n)
\end{equation}
implying  a priori bounds in the primitive case.
(For instance, by the {\em Record Argument}: by selecting ``record levels'', on which the degeneration   exceeds all the preceding ones, we immediately arrive at a contradiction.)

\begin{figure}[t!]
\[\begin{tikzpicture}[scale=0.8]

\draw[fill=red,opacity=0, fill opacity=0.2]
(0.2,-0.2)--(0.2,0.2) -- (2.8,0.2)--(2.8,-0.2);

\node[red,] at (1.5,0.5) {$\RR_0$};

\draw[fill=blue,opacity=0, fill opacity=0.2]
(3.2,-0.2)--(3.2,0.2) -- (10.8,0.2)--(10.8,-0.2);

\node[blue] at (7,0.5) {$\FamG$}; 

\draw[fill=red,opacity=0, fill opacity=0.2]
(11.2,-0.2)--(11.2,0.2) -- (13.8,0.2)--(13.8,-0.2);

\node[red,] at (12.5,0.5) {$\RR_1$};

\begin{scope}[rotate =0]
\draw[line width=0.8mm] (-0.5,0)--(0.5,0); 
\draw[line width=0.8mm] (-0.2,-0.2)--(-0.2,0.2);
\draw[line width=0.8mm] (0.2,-0.2)--(0.2,0.2);

\node[above] at (-0.2,0.2) {$\bush_0$};
  \end{scope}

\begin{scope}[shift={(3,0)}]
\draw[line width=0.8mm] (-0.5,0)--(0.5,0); 
\draw[line width=0.8mm] (-0.2,-0.2)--(-0.2,0.2);
\draw[line width=0.8mm] (0.2,-0.2)--(0.2,0.2);

\node[above] at (-0.,0.2) {$\bush_1$};
  \end{scope}

\begin{scope}[shift={(11,0)}]
\draw[line width=0.8mm] (-0.5,0)--(0.5,0); 
\draw[line width=0.8mm] (-0.2,-0.2)--(-0.2,0.2);
\draw[line width=0.8mm] (0.2,-0.2)--(0.2,0.2);
\node[above] at (-0.,0.2) {$\bush_2$};
  \end{scope}
  
  \begin{scope}[shift={(14,0)}]
\draw[line width=0.8mm] (-0.5,0)--(0.5,0); 
\draw[line width=0.8mm] (-0.2,-0.2)--(-0.2,0.2);
\draw[line width=0.8mm] (0.2,-0.2)--(0.2,0.2);
\node[above] at (0.2,0.2) {$\bush_3$};
  \end{scope}

\begin{scope}[shift={(0,-3)}]

\draw[fill=red,opacity=0, fill opacity=0.2]
(0.2,-0.2)--(0.2,0.2) -- (2.4,0.2)--(2.4,-0.2);

\node[red,] at (1.5,0.5) {$\RR'_0$}; 

\draw (2,0.5) edge[line width=0.2mm,->,bend right]  node[right]{ $f^2$}(2, 2.5);

\begin{scope}[shift={(2.5,0)}]
\draw[fill=red,opacity=0, fill opacity=0.2]
(0.7,-0.2)--(0.7,0.2) -- (2.3,0.2)--(2.3,-0.2);

\node[red,] at (1.5,0.5) {$\RR''_0$};

\end{scope}

\draw[fill=blue,opacity=0, fill opacity=0.2]
(5.2,-0.2)--(5.2,0.2) -- (8.8,0.2)--(8.8,-0.2);

\node[blue] at (7,0.5) {$\FamG'$}; 

\draw (7.7,0.5) edge[line width=0.2mm,->,bend right]  node[right]{ $f^2$}(7.7, 2.5);

\draw[fill=red,opacity=0, fill opacity=0.2]
(11.6,-0.2)--(11.6,0.2) -- (13.8,0.2)--(13.8,-0.2);

\node[red,] at (12.5,0.5) {$\RR'_1$}; 

\draw (13,0.5) edge[line width=0.2mm,->,bend right]  node[right]{ $f^2$}(13, 2.5);

\begin{scope}[shift={(-2.5,0)}]
\draw[fill=red,opacity=0, fill opacity=0.2]
(11.7,-0.2)--(11.7,0.2) -- (13.3,0.2)--(13.3,-0.2);

\node[red,] at (12.5,0.5) {$\RR''_1$}; 
\end{scope}

\begin{scope}[rotate =0]
\draw[line width=0.8mm] (-0.5,0)--(0.5,0); 
\draw[line width=0.8mm] (-0.2,-0.2)--(-0.2,0.2);
\draw[line width=0.8mm] (0.2,-0.2)--(0.2,0.2);
%\draw[line width=0.9mm,gray] (0.5,0) -- (0.8,0); 
%\draw[line width=0.9mm,gray] (0.6,-0.2) -- (0.6,0.2); 
  \end{scope}

\begin{scope}[shift={(3,0)}]
\draw[line width=0.9mm,gray] (-0.6,-0.2) -- (-0.6,0.2); 
\draw[line width=0.9mm,gray] (-0.9,0) -- (-0.5,0); 
\draw[line width=0.8mm] (-0.5,0)--(0.5,0); 
\draw[line width=0.8mm] (-0.2,-0.2)--(-0.2,0.2);
\draw[line width=0.8mm] (0.2,-0.2)--(0.2,0.2);
  \end{scope}

\begin{scope}[gray,shift={(5,0)}]
\draw[line width=0.8mm] (-0.5,0)--(0.5,0); 
\draw[line width=0.8mm] (-0.2,-0.2)--(-0.2,0.2);
\draw[line width=0.8mm] (0.2,-0.2)--(0.2,0.2);
  \end{scope}

\begin{scope}[gray,shift={(9,0)}]
\draw[line width=0.8mm] (-0.5,0)--(0.5,0); 
\draw[line width=0.8mm] (-0.2,-0.2)--(-0.2,0.2);
\draw[line width=0.8mm] (0.2,-0.2)--(0.2,0.2);
  \end{scope}

\begin{scope}[shift={(11,0)}]
\draw[line width=0.8mm] (-0.5,0)--(0.5,0); 
\draw[line width=0.8mm] (-0.2,-0.2)--(-0.2,0.2);
\draw[line width=0.8mm] (0.2,-0.2)--(0.2,0.2);
\draw[line width=0.9mm,gray] (0.5,0) -- (0.8,0); 
\draw[line width=0.9mm,gray] (0.6,-0.2) -- (0.6,0.2); 
  \end{scope}
  
  \begin{scope}[shift={(14,0)}]
\draw[line width=0.8mm] (-0.5,0)--(0.5,0); 

\draw[line width=0.8mm] (-0.2,-0.2)--(-0.2,0.2);
\draw[line width=0.8mm] (0.2,-0.2)--(0.2,0.2);

%\draw[line width=0.9mm,gray] (-0.6,-0.2) -- (-0.6,0.2); 
%\draw[line width=0.9mm,gray] (-0.9,0) -- (-0.5,0); 
  \end{scope}

\end{scope}
\end{tikzpicture}\]

\caption{Pull-off in the Feigenbaum combinatorics for non-periodic
  rectangles. Either most of the horizontal degeneration is within periodic rectangles $\RR_0,\RR_1$
  such that a $(1-\delta)$-portion of each $\RR_i$ overflows its periodic lifts $\RR'_i$. Or we have $\Width^\ver_\loc(\bush_j)\succeq_\delta \Width_\loc(\bush_j).$}
\label{Fg:Pull off:Feigen Comb}
\end{figure}
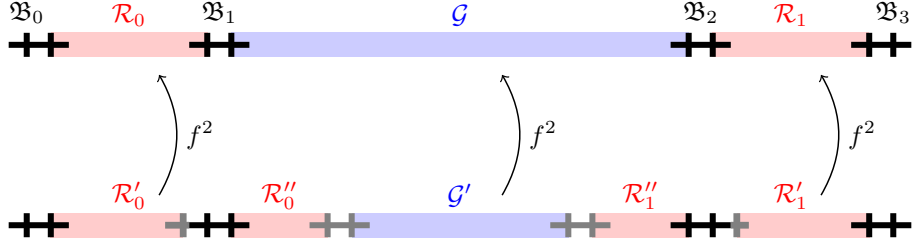

\subsubsection{Satellite case}\label{ss:intro:SatCase} Let us now assume that $f$ is an infinitely renormalizable map with the period-doubling combinatorics; i.e., $f$ is hybrid equivalent to a map in the period-doubling Feigenbaum combinatorial class. The general bounded case is similar.

Let $\bush\coloneqq \filled^1_0\cup \filled^1_1$ be the bouquet consisting of two level $1$ little Julia sets of $f$. Instead of $\Width(f)$, we will measure
\[\Width_\bullet(f)\coloneqq \Width(V\setminus \bush).\]

In the dynamical plane of $f=f_0$, consider the periodic cycle of little bouquets  $\bush_0,\bush_1,\dots, \bush_{p-1}$ (where $p=2^n$) associated with $f_n=\RR^n f$. We enumerate these bouquets linearly from left-to-right. Consider the thick-thin decomposition of $V'\coloneqq V\setminus \bigcup_i \bush_i$. The following properties are established similarly as in the primitive case considered above:
\begin{itemize}
\item Estimates \eqref{eq:outline:1} and~\eqref{eq:outline:2}, where little Julia sets $\filled_i$ are replaced with bouquets $\bush_i$;
\item alignment of the  horizontal rectangles
% in the thin-thick decomposition of $V'$
with the Hubbard tree of $f$ (after several restrictions of the domain).
\end{itemize}

However, unlike in the primitive case~\S\ref{sss:intro:PrimPullOff},
$f$ may have periodic horizontal rectangles $\RR_k$ aligned with the Hubbard tree between $\bush_{2k}$ and $\bush_{2k+1}$ as shown on Figure~\ref{Fg:Pull off:Feigen Comb}. This leads us to the following alternative: either most of the total degeneration is within the $\RR_k$ or a substantial part of the total degeneration is vertical (i.e.~\eqref{eq:outline:3} holds). More precisely, choose a small $\delta>0$. We have: 
\begin{enumerate}[label=\text{(\Roman*)},font=\normalfont,leftmargin=*]
\item \label{C1:intro}either there is a periodic rectangle $\RR_k$ connecting $\bush_{2k},\bush_{2k+1}$ such that a $(1-\delta)$ part of $\RR_k$ overflows its iterated lift $\RR'_k$ under $f^{2^{n-1}}$; 
\item \label{C2:intro} or else we have:
\[
\Width_\loc^\ver(\bush_j) \asymp_\delta \Width_\loc(\bush_j) \sp\sp\sp\text{ for every }\bush_j
\]
\end{enumerate}
If~\ref{C2:intro} holds, then assuming $p\gg_\delta 1$ and $\Width_\bullet(f_n)\gg_{p,\delta} 1$ and repeating the argument of~\eqref{eq:intro:comp}, we obtain 
\begin{enumerate}[label=\text{(\Roman*$'$)},start=2, font=\normalfont,leftmargin=*]
\item \label{C2':intro}\hspace{3cm} $\Width_\bullet(f)>2 \Width_\bullet(f_n)$;
\end{enumerate}
the Record Argument leads to a contradiction in case~\ref{C2':intro}. 

The dichotomy ``Case~\ref{C1:intro} vs Case~\ref{C2':intro}'' is stated as Theorem~\ref{thm:prim pull-off}. This is a refinement of~\cite{K}.

\emph{Consider now Case~\ref{C1:intro}.} As Figure~\ref{Fg:Sat Pull
  off:Feig Comb} illustrates, bouquets $\bush_0,\bush_1$ grow under
lifting, thus the ``combinatorial distance'' between $\widetilde
\bush_0,\widetilde \bush_1$ shrinks. This allows us to detect
laminations $\FamS_i$ (the ``differences'' between $\RR=\RR_k$ and its
lift $\RR'$) that are much wider than $\Width(\RR)\succeq
\Width_\bullet(f_n)$; i.e.~$\Width(\FamS_i)\ge C_\delta \Width(\RR)$
with $C_\delta\to \infty$ as $\delta\to 0$. There are two
possibilities: either a substantial part of $\FamS_i$ hits preperiodic
bouquets associated with $f_{n+1}$ or a substantial part of $\FamS_i$
creates a ``wave'' (see \S \ref{waves sec})  above such a preperiodic
bouquet. In the latter case, the Wave Lemma implies that
$\Width_\bullet(f_{n-1}) \gg \Width_\bullet(f_n)$
and the Record Argument is applicable.

In the former case, we obtain $\Width_\bullet (f_{n+1}) \succeq \Width(\FamS_i) \succeq C_\delta \Width_\bullet(f_n)$; i.e.:
\[\Width_\bullet (f_{n+1}) \ge C'_\delta \Width_\bullet(f_n)\sp\sp\text{ with }\sp C'_\delta\to \infty\text\sp { as }\sp \delta\to 0.\]
 For a sufficiently small $\delta$, this eventually contradicts the Teichm\"uller contraction within hybrid classes~\cite{S:Berkeley} (stated as Proposition~\ref{prop:Teichm contr}):  
\[ \Width_\bullet (f_n) = O\left( \Delta^n\right) \sp\sp\sp\text{ for some } \Delta >1.\]

Finally, we will convert in Theorem~\ref{thm:psi bounds} bounds for $\Width_\bullet(f_m)$ into bounds for $\Width(f_m)$.

\begin{rem} In the paper, we will be measuring the degeneration around ``bushes'' \S\ref{ss:bushes}, which are Hubbard continua enhanced with the periodic cycle of level one little Julia sets. An alternative would be to keep the Julia sets for primitive renormalization and use the satellite flowers in the satellite case. We believe that it is also possible to work with Hubbard continua; however, the Wave Lemma~\S\ref{waves sec} and the transition toward classical QL-bounds will be more subtle.
\end{rem}

\subsection*{Acknowledgments}
The first author was partially supported by the NSF grant DMS $2055532$ and the ERC grant ``HOLOGRAM''. The second author has been partly supported by the NSF, the Hagler and Clay Fellowships, the Institute for Theoretical Studies at ETH (Zurich), and MSRI (Berkeley).

We would like to thank Jeremy Kahn for many stimulating discussions. We are grateful to the referee for a comprehensive reading of the manuscript. A number of our pictures are made with W. Jung's program \emph{Mandel}.

Results of this paper were first announced in 2021 at the \href{https://sites.google.com/g.ucla.edu/quasiworld/}{Quasiworld Seminar} and at the CIRM conference \href{https://www.i2m.univ-amu.fr/events/advancing-bridges-in-complex-dynamics/}{``Advancing Bridges in Complex Dynamics''}, see the talk by the second author~\cite{L21}.

\begin{figure}[t!]
\[\begin{tikzpicture}[xscale=2,yscale=1.5]

\draw[fill=red,opacity=0, fill opacity=0.2]
(0.2,-0.17)--(0.2,0.17) -- (5.8,0.17)--(5.8,-0.17);

\node[red,scale=1.2] at (3,0.5) {$\RR$};

\begin{scope}[rotate =0]
\draw[line width=0.8mm] (-0.5,0)--(0.5,0); 
\draw[line width=0.8mm] (-0.2,-0.2)--(-0.2,0.2);
\draw[line width=0.8mm] (0.2,-0.2)--(0.2,0.2);
\node[above,scale=1.2] at(0,0.2){$\bush_0$};
  \end{scope}

\begin{scope}[shift={(6,0)}]
\draw[line width=0.8mm] (-0.5,0)--(0.5,0); 
\draw[line width=0.8mm] (-0.2,-0.2)--(-0.2,0.2);
\draw[line width=0.8mm] (0.2,-0.2)--(0.2,0.2);
\node[above,scale=1.2] at(0,0.2){$\bush_1$};
  \end{scope}

\begin{scope}[shift={(0,-2)}]

\draw[fill=red,opacity=0, fill opacity=0.2]
(1.1,-0.17)--(1.1,0.17) -- (4.9,0.17)--(4.9,-0.17);

\node[red,scale=1.2] at (3,0.5) {$\RR'$}; 

\draw[fill=orange,opacity=0, fill opacity=0.3]
(1.1,-0.17)--(1.1,0.17) -- (0.6,0.17)--(0.6,-0.17);

\node[orange,scale=1.2] at (0.85,0.5) {$\FamS_1$};

\draw[fill=orange,opacity=0, fill opacity=0.3]
(5.4,-0.17)--(5.4,0.17) -- (4.9,0.17)--(4.9,-0.17);

\node[orange,scale=1.2] at (5.15,0.5) {$\FamS_2$};

\begin{scope}[rotate =0]
\draw[line width=0.8mm] (-0.5,0)--(0.5,0); 
\draw[line width=0.8mm] (-0.2,-0.2)--(-0.2,0.2);
\draw[line width=0.8mm] (0.2,-0.2)--(0.2,0.2);
\draw[line width=0.9mm,gray] (0.5,0) -- (1.4,0); 
\draw[line width=0.9mm,gray] (0.6,-0.2) -- (0.6,0.2); 
\draw[line width=0.9mm,gray] (1.1,-0.2) -- (1.1,0.2); 

\node[above,scale=1.2] at(0,0.2){$\widetilde \bush_0$};
 \end{scope}

\begin{scope}[shift={(6,0)}]
\draw[line width=0.9mm,gray] (-1.1,-0.2) -- (-1.1,0.2); 
\draw[line width=0.9mm,gray] (-0.6,-0.2) -- (-0.6,0.2); 
\draw[line width=0.9mm,gray] (-1.4,0) -- (-0.5,0); 
\draw[line width=0.8mm] (-0.5,0)--(0.5,0); 
\draw[line width=0.8mm] (-0.2,-0.2)--(-0.2,0.2);
\draw[line width=0.8mm] (0.2,-0.2)--(0.2,0.2);
\node[above,scale=1.2] at(0,0.2){$\widetilde \bush_1$};
  \end{scope}

\end{scope}
\end{tikzpicture}\]

\caption{Pull-off in the Feigenbaum combinatorics for periodic rectangles. Assume that a $(1-\delta)$ part of a periodic rectangle $\RR$ overflows its iterated lift $\RR'$. Since $\RR$ also overflows $\FamS_1,\FamS_2$ (that are disjoint from $\RR'$), we obtain from the Gr\"otzsch inequality that $\Width(\FamS_i)> C_\delta\Width(\RR)$ with $C_\delta\to \infty$ as $\delta\to 0$.}
\label{Fg:Sat Pull off:Feig Comb}
\end{figure}
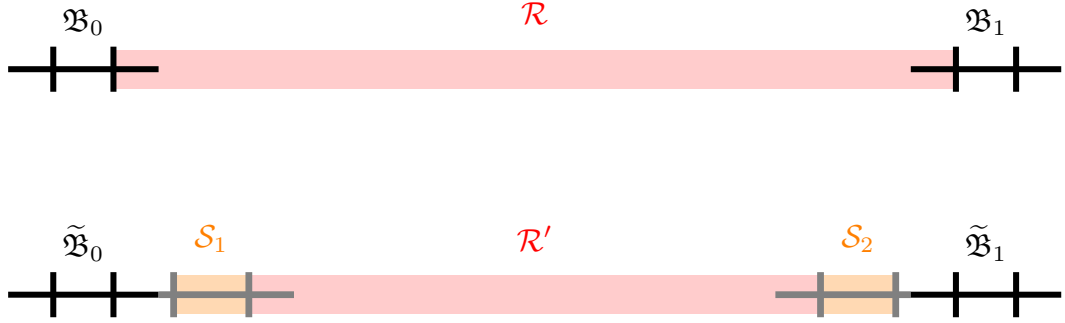

 \section{QL renormalization, bushes, and invariant arc diagrams} \label{ql maps}
The reader can find most of the needed background material in the book \cite{L:book}. Let us briefly outline the context of this section with emphasis on the notation.

\subsection{Outline} Basics aspects of quadratic-like maps are summarized in~\S\ref{ss:QL maps}. We will usually denote by
\begin{itemize}
\item $f\colon X\to Y$ a ql map,
\item $\filled_f$ its (filled) Julia set, 
\item $\filled^{[n]}_i\subset \filled_f$ its level $n$ little Julia sets,
\item $\bfilled^{[n]}\coloneqq \bigcup_i \filled^{[n]}_i$ the union of periodic level $n$ little Julia sets, see~\eqref{eq:bfilled},
\item $\Hub_f\subset \filled$ the Hubbard continuum of $f$, see~\S\ref{sss:CombHT},
\item $\bush_f\coloneqq \Hub_f \cup \bfilled^{[1]}$ the \emph{bush} of $f$, see \eqref{eq:dfn:bush},
\item $\bush_f^{(m)}\coloneqq f^{-m}(\bush_f)$ the little bush of height $m$,
\item $\bush^{[n]}_i \equiv \bush^{[n],(0)}_i\subset \bush^{[n],(m)}_i\subset \filled^{[n]}_i$ the associated little bushes,
\item $\bbush^{[n]}\coloneqq \bigcup_i \bush^{[n]}_i\subset \bfilled^{[n]}$ the union of periodic level $n$ little bushes,
\item $\bbush^{[n],(m)}\coloneqq f^{-m} \left(\bbush^{[n]}\right)$ the union of level $n$ and height $m$ little periodic and preperiodic bushes,
\item $\bbush^{[n],(m)}_\per\coloneqq\bbush^{[n],(m)} \cap \bfilled^{[n]}$ the union of periodic level $n$ and height $m$ little bushes.
\end{itemize}
If $\bush_f =\bfilled^{[1]}$, i.e., $\Hub_f \subset \bfilled^{[1]}$, then we will also refer to $\bush_f $ as the \emph{satellite flower} of $f$. Preperiodic little Julia sets and bushes are defined accordingly, \S\ref{sss:LittleBushes}. We will usually assume that
\begin{enumerate}[label=\text{(\Alph*)},start=15, font=\normalfont,leftmargin=*]
\item \label{CrtPoint} $\filled^{[n]}_0$ contains the critical point of $f$.
\end{enumerate}
Periodic little Julia sets will be usually labeled dynamically:
\[f\left(\filled^{[n]}_i\right)=\filled^{[n]}_{i+1},\sp\sp\sp i,\ i+1\in \Z/{p_n}\simeq\{0,1,\dots, p_n-1\}.\]
However, in the satellite combinatorics, we will often label the $\filled^{[n]}_i$ with respect to their cyclic order in the satellite flower; see, for instance, Lemma~\ref{lem:per arcs}. Preperiodic little Julia sets $\filled^{[n]}_j$ will be labeled with some subindex.  The labeling of little bushes is naturally inherited from the labeling of little Julia sets: $ \bush^{[n],(m)}_i\subset \filled^{[n]}_i$. We will always indicate when the cyclic order is used. And we will also mention when a preperiodic object appears.

 The Teichm\"uller contraction of ql renormalization will be established in~\S\ref{ss:Teichm contra}.

In~Lemma~\ref{lem:Alignment}, we will show that invariant horizontal arc diagrams are aligned with the Hubbard dendrite. Figures~\ref{Fig:exm Inv Arc Diagr} and~\ref{Fig:PeriodicArcs} demonstrate new subtleties arising in the satellite case. In particular, there are genuine periodic arcs, described in Lemma~\ref{lem:per arcs}; they encode almost periodic rectangles (see~\S\ref{sss:PeriodRect} and Figure~\ref{Fg:Pull off:Feigen Comb}) that will have a separate treatment in Section~\ref{s:Pulloff for per rectangles}.

\subsection{Quadratic-like maps}\label{ss:QL maps}
A {\it quadratic-like map}  $f: X\ra Y$,
which will also be abbreviated as a {\it ql map}, 
  is a holomorphic double branched covering between two Jordan disks
$X \Subset Y\subset \C$.  
It has a single critical point that we usually put at the origin $0$. The 
annulus $A= Y\sm \ol X$ is called the {\it fundamental annulus} of $f\colon X\to Y$. 
We let $\mod f : = \mod A$. 
The {\it filled Julia set} $\filled_f$
is the set of non-escaping points:
$$
  \filled \equiv  \filled_f \equiv \filled (f)\coloneqq \{ z:\,  f^n z\in X, \ n=0,1,2,\dots \}.
$$
Its boundary is called the {\it Julia set} $\Julia \equiv \Julia(f)$. The (filled) Julia set is either connected or Cantor,
depending on whether the critical point is non-escaping (i.e., $0\in \filled (f)$) or  otherwise.
In this paper, filled Julia sets will play a bigger role than the Julia sets, and we will often skip 
the adjective ``filled'' when it is clear (e.g., from the notation) what we mean.

 We say that two quadratic-like maps with connected Julia set represent
the same {\it germ } if they have the same filled Julia set and
coincide in some neighborhood of it.
For a ql germ $f$, we define 
\begin{equation}
\label{eq:mod(f)} \mod f \coloneqq \sup \mod (Y'\setminus X'), 
\end{equation}
where the supremum is taken over all $f\colon X'\to Y'$ representing the ql germ of $f$. 

Two quadratic-like maps $f: X\ra Y$ and $\tl f: \tl X\ra \tl Y$ are called {\it hybrid conjugate}
% {\it hybrid equivalent}
if they are conjugate by a quasiconformal map $h: (Y,X) \ra (\tl Y, \tl X)$ such that
$\dibar h= 0$ a.e. on $\filled$.  

A simplest example of a quadratic-like map is provided by a quadratic polynomial $f_c: z\mapsto z^2+c$
restricted to a disk $Y=\Disk_R$ of sufficiently big radius. The  Douady and Hubbard {\it Straightening Theorem} asserts that 
any quadratic-like map $f$ is hybrid conjugate % equivalent
to some restricted quadratic polynomial $f_c$. 
Moreover, if $J$ is connected then the parameter $c\in \MM$ is unique.

As for quadratic polynomials, two fixed points of a quadratic-like map with connected Julia set
have a different dynamical meaning. One of them, called $\beta$, is 
 the landing point of a proper arc $\gamma\subset X \sm K(f) $  such that $f(\gamma)\supset \gamma$.
It is either repelling or parabolic with multiplier one. The other fixed point, called  $\alpha$, is either 
non-repelling or a {\it  cut-point}  of the Julia set.

% The {\em postcritical set} $\Post\equiv \Post_f$ of $f$ is the closure of  $\orb 0$.   

\subsubsection{Combinatorial models}
A quadratic polynomial $f$ is called {\em periodically  hyperbolic}  (resp., {\em repelling}) if it does not have neutral
(resp., non-repelling) cycles. Note that in the periodically repelling case, $\inter \filled=\emptyset$, so $\Julia=\filled$. 
The filled Julia set of a periodically hyperbolic map admits a locally connected {\em combinatorial model}
$\filled^\com= \filled^\com (f)$ obtained by taking the quotient of the unit disk by the associated
rational lamination (see \cite[\S 32.1.3]{L:book}). The combinatorial model is endowed with the induced dynamics $f_\com: \filled^\com\ra \filled^\com$ which is semi-conjugate to $f$ by a natural  projection $\pi: \filled\ra \filled^\com$ whose {\em fibers} are ``combinatorial classes'' -- sets of points inseparable by preperiodic points.

 \subsubsection{Hubbard continua}\label{sss:CombHT}
In the superattracting case (when the critical point is periodic) the {\em Hubbard tree} $\Hub_f$
is defined as the smallest tree in $\filled$ containing the postcritical set such that $\Hub_f$ intersects the components
of $\inter \filled$ along internal rays. It is invariant under $f$ and is marked with the orbit of $0$. If $\filled$ is periodically repelling and locally connected, we can define the
{\em Hubbard dendrite}  $\Hub_f$ as the smallest connected closed forward-invariant subset of $\filled$ containing the postcritical set.

For a general periodically repelling map,
the (combinatorial) Hubbard dendrite $\Hub_f^\com\subset \filled^\com$ is defined for its combinatorial model $f\colon \filled^\com \selfmap$.
Lifting $\Hub_f^\com$ via $\pi\colon \filled\to \filled^\com$, we obtain the {\em Hubbard continuum} $\Hub_f$.

For a periodically repelling $f$ and $x,y \in \Julia_f=\filled_f$, we define a \emph{geodesic} continuum 
\begin{equation}
\label{eq:geod contin}
[x,y]\coloneqq \pi^{-1}[\pi(x),\pi(y)],
\end{equation} 
where $[\pi(x),\pi(y)]$ is a unique arc in $\Julia^\com$ connecting $\pi(x),\pi(y)$.

\subsubsection{QL-renormalization}\label{renorm sec} 

A quadratic-like map  
$f: X \ra Y$ is called {\it DH renormalizable} (after Douady and Hubbard) if there is a quadratic-like
restriction 
\begin{equation}
\label{eq:DH pre renorm}
        f_1=f_{1,0}= \RR f= f^{p_1}: X_{1,0} \ra Y_{1,0}
\end{equation}
 with connected Julia set $\filled_{0}^{[1]}$ such that
the little Julia sets 
\begin{equation}
\label{eq:small J cycle}
\filled^{[1]}_i\coloneqq f^i\left(\filled_{0}^{[1]}\right),  \hspace{1cm} k=0,\dots, p_1-1, 
\end{equation} are either pairwise disjoint or else touch at their $\beta$-fixed points. In the former case the renormalization is called {\it primitive}, while in the latter it is called {\it satellite}.

Note that  there are many  ql maps $f_1\colon X_{1,0}' \ra Y'_{1,0} $ that satisfy the above requirements
(if $f$ is renormalizable). However, all of them represent the same {\em ql germ}.

The map $f_1 $ in~\eqref{eq:DH pre renorm} is called a {\it pre-renormalization} of $f$.
If it is considered up to a linear rescaling, it is called  the {\it  renormalization} of $f$. (In what follows, we will often skip the prefix ``-pre'' as long as it does not lead to a confusion.)   

For every $\filled^{[1]}_i$, there are $Y_{1,i} \Supset X_{1,i}\Supset \filled^{[1]}_i$ such that 
\begin{equation}
\label{eq:f_1 i} f_1=f_{1,i}= \RR f= f^{p_1}: X_{1,i} \ra Y_{1,i}
\end{equation}
is a ql map with non-escaping set $\filled^{[1]}_i=\filled_{f_{1,i}}$. All maps~\eqref{eq:f_1 i} represent conformally conjugate germs. We assume that $\filled^{[1]}_0$ contains the critical point of $f$; see~\ref{CrtPoint}.
%We will usually assume that $f_{1,0}$

\subsubsection{Little copies of $\MM$}\label{sss:LittleCopies} The sets $\filled^{[1]}_i$ in~\eqref{eq:small J cycle} are referred to as the {\it little (filled) Julia sets}.
Their positions in the big Julia set $\filled_f$ determine the renormalization {\it combinatorics}.
By the Douady-Hubbard Straightening Theorem \cite{DH:pol-like},
the set of parameters $c$ for which the quadratic polynomial $f_c\colon z\mapsto z^2+c$ is renormalizable
with a given combinatorics forms a {\it little Mandelbrot copy}  $\MM_1\subset \Mandel$
(see \cite[Theorem 43.1]{L:book}).
% (see Figure \ref{little copy fig}).
%We can encode the renormalization combinatorics by the corresponding
%copy $\MM'$ itself.
%Equivalently, it  can be  encoded  by the center $c_\base$ of $\MM'$
%or the corresponding  Hubbard tree $\Hub_\base$ obtained by connecting
%the points of the superattracting cylce by arcs in the Julia set. 
%
The renormalization combinatorics can be formally encoded by the  Hubbard tree $\Hub_\base$
of the superattracting center  $c_\base$ of $\MM_1$.

Each little copy $\MM_1$ can be canonically mapped onto the whole
Mandelbrot set $\MM$ by the {\em straightening homeomorphism}
$\chi_1: \MM_1\ra \MM$.

\comm{*****
In fact, the family of renormalizations
$R(P_c)$, $c\in \MM'$, with a given combinatorics can be included in
a quadratic-like family $\Bbb F= (f^p : X_c\ra Y_c)$ over
some domain $\La\supset \MM'$ so that $\MM'= \MM_{\Bbb F}$.
A natural base point $c_\circ \in \MM'$ in this family is the superattracting
parameter with period $p$. 
It is called the {\it center} of $\MM'$.
Any superattracting parameter
in $\MM$  with period $p>1$ is the center of some Mandelbrot  copy
$\MM'$ like this. 
Moreover, in case of primitive combinatorics the quadratic-like family
$\Bbb F$ is proper and unfolded. 
(See \cite{DH,D-ICM,L-book} for a discussion of all these facts.)
*************}

%\begin{figure}    
%\centerline{\includegraphics[width=.8\hsize]{pictures/copy.pdf}}
%\caption{A primitive little $M$-copy. 
%\label{little copy fig}}
%\end{figure}

A little Mandelbrot copy $\MM_1$ is called {\it primitive} or {\it satellite} depending on  the type of the corresponding renormalization. They can be easily distinguished as any satellite copy is attached to
some hyperbolic component of $\inter \MM$ and does not have the cusp at its root point. 

\subsubsection{Infinitely renormalizable maps}\label{sss:infin renorm} The notions  of an infinitely DH renormalizable map $f$ with renormalization periods $p_n$, and its renormalizations $f_n= \RR^n f$,
are defined naturally. 

By default, we assume that $p_n$ is the \emph{smallest renormalization period after $p_{n-1}$}. We will denote by $\filled^{[n]}_i, i\in \{0,1,\dots, p_n-1\}$ the level $n$ little Julia sets of $f$ enumerated dynamically so that $\filled^{[n]}_0$ contains the critical point of $f$. We will write
\begin{equation}
\label{eq:bfilled}
\bfilled^{[n]}\coloneqq \bigcup_i \filled^{[n]}_i
\end{equation}

 The ratios $q_n:= p_n/p_{n-1}$ are called {\em relative periods}.
One says that such a map has  {\it bounded combinatorics of type
  $\bar p$} 
if the relative periods are bounded by $\bar p$.
In this case, the  map $f$  is called {\em Feigenbaum of type $\bar p$}.
We say that a Feigenbaum map is {\it
  primitive/satellite}  if all its renormalizations are such.  
A Feigenbaum map has   {\it a priori} bounds if
% the renormalizations
\begin{equation}
\label{eq: ql a priori bounds}
\mod \RR^n f\geq \eps>0
\end{equation}

We say that the family $\FF_{\bar p}$ of Feigenbaum maps of type $\bar p$
have {\it beau bounds}
if there exists $\mu>0$ depending only on $\bar p$  such that for any $\nu>0$ there exists 
$n_0= n_0(\bar p, \nu)$ such that  for any  $f\in \FF_{\bar p}$  with $\mod f\geq \nu$ we have
$$
    \mod \RR^n f \geq \mu\quad  \text{for all }\ n\geq n_0. 
$$    
%It was proved by Kahn \cite{K} that {\it primitive Feigenbaum  maps
%  have beau bounds}, with $\mu$ depending only on the combinatorial
%bound.
%%% In fact,  $\mu$  can be made uniform over  some class of combinatorics \cite{KL}.%

\subsection{Bushes}\label{ss:bushes}
 Consider a DH renormalizable quadratic-like map $f\colon X\to Y$. The \emph{bush} of $f$ is 
\begin{equation}
\label{eq:dfn:bush}
\bush_f\equiv \bush(f)\coloneqq \Hub_f \cup  \bfilled^{[1]},
\end{equation}
where $\Hub_f$ is the Hubbard continuum and $\bfilled^{[1]}$ is the periodic cycle of level one little Julia sets~\eqref{eq:bfilled}. For $m \ge 0$, we define the \emph{bush of height $m$} to be 
\begin{equation}
\label{eq:dfn:bush:preim}
\bush^{(m)}=\bush^{(m)}_f\coloneqq f^{-m}(\bush_f).
\end{equation}

\subsubsection{Little bushes}\label{sss:LittleBushes}
Suppose that $f$ is at least $n+1$ times DH renormalizable and let $f_n=\RR^n f$ be its $n$th renormalization of $f$. Then $f_n$ has a well defined bush $\bush(f_n)$. Consider the periodic cycle of little level $n$ filled Julia sets $\filled_i^{[n]}$ associated with $f_n$ in the dynamical plane of $f$. Let $f_{n,i}$ be the $n$th prerenormalization around $\filled_i^{[n]}$, compare~\eqref{eq:f_1 i}. Then $\filled_i^{[n]}$ contains the little bush $\bush_i^{[n]}\equiv\bush(f_{n,i})\simeq \bush(f_n)$ as well as $\bush_i^{[n],(m)}\equiv\bush^{(m)}(f_{n,i})\simeq \bush^{(m)}(f_n)$. Note that $ \bush^{[n],(m)}_i$ is the unique periodic lift of $\bush_{i+m}^{[n]}$ under $f^{m}$.

We write
\[\bbush^{[n]}\coloneqq \bigcup_i \bush_i^{[n]}
,\sp  \bbush^{[n],(m)}\coloneqq f^{-m} \left(\bbush^{[n]}\right),\sp\bbush^{[n],(m)}_\per\coloneqq\bbush^{[n],(m)} \cap \bfilled^{[n]}.
\]
Observe that $\bbush^{[n],(m)}_\per$ is the union of periodic little bushes of height $m$. A \emph{preperiodic bush} $\bush^{[n],(m)}_a$ is a non-periodic connected component of $\bbush^{[n],(m)} $. The \emph{preperiod} of $\bush^{[n],(m)}_a$ is the smallest $s\le m$ such that $f^s \left( \bush^{[n],(m)}_a \right)$ is periodic.

 Every periodic or preperiodic little bush $\bush_{a}^{[n],(m)}$ is within a unique little periodic or preperiodic filled Julia set $\filled^{[n]}_a$ associated with $f_n$. The $\bush_{a}^{[n],(m)}$ exhaust $\filled^{[n]}_a$:
 \[\filled^{[n]}_a = \overline{\bigcup_{m} \bush^{[n],(m)}_a}\ .\] 
By construction, the $\bush^{[n],(m)}_a$ are pairwise disjoint but the $\filled^{[n]}_a$ may touch each other in the satellite case. 
\subsubsection{Superattracting model}
\label{sss:psib:super attr model}
Consider a ql map $f\colon X\to Y$ and assume it is $n+1$ DH renormalizable. Then $f$ is hybrid equivalent to $z^2+c$, where $c$ is in a level $n+1$ little copy $\MM^{[n+1]}_i\subset \MM$. A \emph{superattracting} model for $f$ of level $n+1$ is any ql map $f_\circ \colon X_\circ \to Y_\circ$ hybrid equivalent to the center of $\MM^{[n+1]}_i$.

It is well-known (follows, for instance, from the lamination theory) that the Hubbard continua $\Hub_f$ and $\Hub_\circ\equiv \Hub_{f_\circ}$ are \emph{combinatorially equivalent} up to $\bbush^{[n]}$: there is a bijection between 
\begin{itemize}
\item components of $\Hub_f\setminus f^{-m}\left(\bbush^{[n]}_f\right)$ and  components of $\Hub_\circ\setminus f^{-m}_\circ\left(\bbush^{[n]}_\circ\right)$ for all $m\ge0$; and
\item components of $\Hub_f\cap f^{-m}\left(\bbush^{[n]}_f\right)$ and of $\Hub_\circ\cap  f^{-m}_\circ\left(\bbush^{[n]}_\circ\right)$ for all $m\ge0$
\end{itemize}
that respects the adjacency, natural embedding, and dynamical relations between respective components.  In other words, the above components define equivalent Markov partitions for $f, f_\circ$.

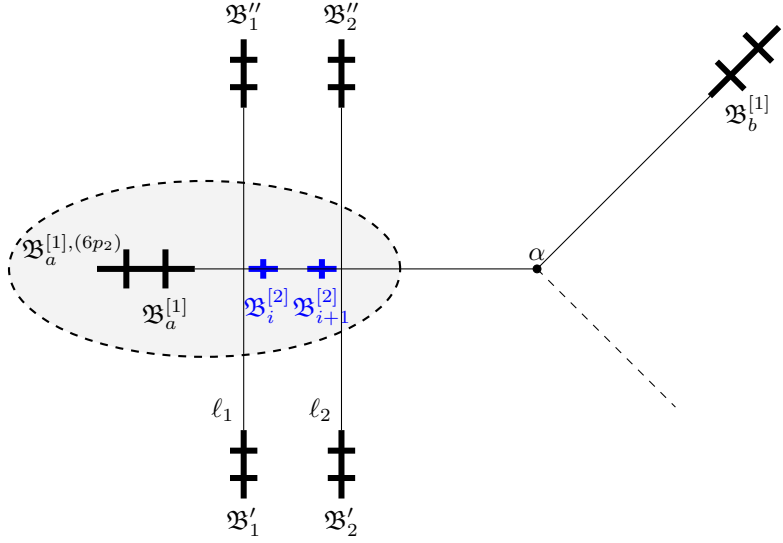
\begin{figure}[t!]
\[\begin{tikzpicture}[scale=1.3]

\begin{scope}[rotate =0]
\draw[line width=0.8mm] (-0.5,0)--(0.5,0); 
\draw[line width=0.8mm] (-0.2,-0.2)--(-0.2,0.2);
\draw[line width=0.8mm] (0.2,-0.2)--(0.2,0.2);
\coordinate (ba) at (0.5,0);
\node[below] at  (0.2,-0.2) {$\bush^{[1]}_{a}$};

 \draw[dashed,line width=0.8,fill, fill opacity=0.05] (0.6,0) ellipse (2cm and 0.9cm);
\node[] at (-0.75,0.2){$\bush^{[1], (6p_2)}_{a}$};

\begin{scope}[shift={(1,2)},rotate=90,scale=0.7]
\draw[line width=0.8mm] (-0.5,0)--(0.5,0); 
\draw[line width=0.8mm] (-0.2,-0.2)--(-0.2,0.2);
\draw[line width=0.8mm] (0.2,-0.2)--(0.2,0.2);
\coordinate (xup) at (-0.5,0);

\node[above] at (0.5,0) {$\bush''$};
\end{scope}
\begin{scope}[shift={(1,-2)},rotate=-90,scale=0.7]
\draw[line width=0.8mm] (-0.5,0)--(0.5,0); 
\draw[line width=0.8mm] (-0.2,-0.2)--(-0.2,0.2);
\draw[line width=0.8mm] (0.2,-0.2)--(0.2,0.2);
\coordinate (xdown) at (-0.5,0);

\node[left] at (-0.8,0) {$\ell_1$};
\node[below] at (0.5,0) {$\bush'$};
\end{scope}

\draw[] (xup)  -- (xdown); 

\begin{scope}[shift={(-0.05,0)}]
\draw [blue, line width=0.8mm] (1.1,0)--(1.4,0)
(1.25,-0.1)  -- (1.25,0.1) ;
\node[blue, above] at(1.38,0.) {$\bush_{i_1}^{[2]}$};
\end{scope}

%xxxxx

\begin{scope}[shift={(0.55,0)}]
\draw [blue, line width=0.8mm] (1.1,0)--(1.4,0)
(1.25,-0.1)  -- (1.25,0.1) ;
\node[blue, below] at(1.2,-0.1) {$\bush_{i_2}^{[2]}$};
\end{scope}

\begin{scope}[shift={(1,0)}]
\begin{scope}[shift={(1,2)},rotate=90,scale=0.7]
\draw[line width=0.8mm] (-0.5,0)--(0.5,0); 
\draw[line width=0.8mm] (-0.2,-0.2)--(-0.2,0.2);
\draw[line width=0.8mm] (0.2,-0.2)--(0.2,0.2);
\coordinate (xup) at (-0.5,0);
\node[above] at (0.5,0) {$\bush'''$};

\end{scope}
\begin{scope}[shift={(1,-2)},rotate=-90,scale=0.7]
\draw[line width=0.8mm] (-0.5,0)--(0.5,0); 
\draw[line width=0.8mm] (-0.2,-0.2)--(-0.2,0.2);
\draw[line width=0.8mm] (0.2,-0.2)--(0.2,0.2);
\coordinate (xdown) at (-0.5,0);
\node[below] at (0.5,0) {$\bush^{\rn{4}}$};

\node[left] at (-0.8,0) {$\ell_2$};

\end{scope}

\draw[] (xup)  -- (xdown); 

\end{scope}

\end{scope}

\begin{scope}[rotate around={45:(4,0)} ,shift={(7,0)},xscale=-1]
\draw[line width=0.8mm] (-0.5,0)--(0.5,0); 
\draw[line width=0.8mm] (-0.2,-0.2)--(-0.2,0.2);
\draw[line width=0.8mm] (0.2,-0.2)--(0.2,0.2);
\coordinate (bb) at (0.5,0);
\node[below] at  (0.15,-0.2) {$\bush^{[1]}_{b}$};

\end{scope}

\draw[] (ba)--(4,0)--(bb);

\begin{scope}[rotate around={-45:(4,0)}]
\coordinate (cc) at (6,0);
\end{scope}
\draw[dashed] (4,0)--(cc);

\filldraw  (4,0) circle (0.04 cm);
\node[above] at (4,0) {$\alpha$};

\end{tikzpicture}\]

\caption{Illustration to Lemma~\ref{lem:bush:prim comb}.}
\label{Fg:lem:bush:prim comb}
\end{figure}
%Since quadratic maps have a single critical point, the combinatorial equi
\subsubsection{Satellite combinatorics} Suppose that  $f$ is at least $3$ times renormalizable and the first renormalization of $f$ is satellite. Then all $\filled_i^{[1]}$ are organized in the \emph{satellite flower} around the $\alpha$ fixed point of $f$: \[\bush_f=\bfilled^{[1]}_f\hspace{1cm} \text{ because }\hspace{1cm} \bfilled^{[1]}_f\supset \Hub_f,\]
where $\Hub_f$ is the Hubbard continuum of $f$. For $s\ge 0$ we write $\Hub^{(s)}_f = f^{-s}(\Hub_f)$. As before, the $p_k$ are the periods of level $k$ Julia sets.

\begin{lem}[Satellite flower] 
\label{lem:bush:prim comb} For a satellite $f$ as above, let $\bush^{[1]}_{a}, \bush^{[1]}_{b}$ be two of its level $1$ periodic bushes. Then the continuum $\Hub^{(6p_2)}_f$ contains geodesic continua $\ell_1$ and $\ell_2$ (as in~\S\ref{sss:CombHT}) connecting preperiodic lifts $\bush',\bush''$  and $\bush''',\bush^\rn{4}$ of $\bush^{[1]}_b$ of preperiods $\le 6 p_2$ such that, see Figure~\ref{Fg:lem:bush:prim comb}:
\begin{itemize}
\item $\ell_1$ and $\ell_2$ separate $\bush^{[1]}_a$ from $\alpha$ (within $\filled_f$),
\item the geodesic continuum  $\ell\subset \Hub_f$ connecting $\ell_1, \ell_2$ intersects little preperiodic bushes $\bush_{i_1}^{[2]},\bush^{[2]}_{i_2}$ of level $2$ with preperiods less than $6p_1$;
\item  $\bush_{i_1}^{[2]},\bush^{[2]}_{i_2}$ are disjoint from $\ell_1\cup \ell_2$.
\end{itemize}
 \end{lem}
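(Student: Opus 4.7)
My plan is to reduce the lemma to an explicit combinatorial verification in the superattracting model. By the Markov-partition equivalence recalled in~\S\ref{sss:psib:super attr model}, the structure of $\Hub_f$ down to depth $6p_2$ in the preimage tower of $\bfilled^{[1]}$ is prescribed by the center of the level-$(n{+}1)$ little Mandelbrot copy containing $f$. Consequently, it suffices to locate the continua $\ell_1,\ell_2$ and level-$2$ bushes $\bush^{[2]}_i,\bush^{[2]}_{i+1}$ inside $\Hub_{f_\circ}$ for a superattracting model $f_\circ$ of level at least $3$, where the Hubbard continuum is a finite tree and the preperiodic structure can be read off directly from the postcritical orbit.

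I would then produce the preperiodic lifts $\bush'_1, \bush''_1, \bush'_2, \bush''_2$ of $\bush^{[1]}_b$ lying inside $\filled^{[1]}_a$ as follows. For $k \equiv b-a \pmod{p_1}$ with $k \le 6p_2$, the iterate $f^k$ maps $\filled^{[1]}_a$ onto $\filled^{[1]}_b$, so $f^{-k}(\bush^{[1]}_b) \cap \filled^{[1]}_a$ is non-empty; once $k$ is large enough that $f^k|_{\filled^{[1]}_a}$ has passed through the critical point several times, there are multiple geometrically distinct preperiodic lifts of $\bush^{[1]}_b$ inside $\filled^{[1]}_a$. Since the first-return map $f_{1,a}=f^{p_1}|_{\filled^{[1]}_a}$ is itself renormalizable with period $q_2 = p_2/p_1$, the preperiodic structure inside $\filled^{[1]}_a$ at $f$-depth $\le 6p_2$ (equivalently $f_{1,a}$-depth $\asymp 6q_2$) is rich. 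I would select $\bush'_1, \bush''_1$ as two such lifts on opposite sides of the Hubbard arc from $\alpha$ into $\bush^{[1]}_a$, placed just inside the level-$2$ substructure nearest $\bush^{[1]}_a$; and $\bush'_2, \bush''_2$ as two lifts placed just outside that substructure, on the $\alpha$-side. The associated geodesic continua $\ell_1,\ell_2$, in the sense of~\eqref{eq:geod contin}, connecting a point of $\bush'_j$ to a point of $\bush''_j$ for $j=1,2$, then form topological barriers separating $\bush^{[1]}_a$ from $\alpha$ inside $\filled_f$.

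Next, I would identify $\bush^{[2]}_i, \bush^{[2]}_{i+1}$ as the two preperiodic level-$2$ pieces of smallest preperiod lying on the geodesic $\ell \subset \Hub_f$ between $\ell_1$ and $\ell_2$. A direct count within the satellite-flower model of $f_{1,a}$ (whose level-$1$ structure coincides with the level-$2$ structure of $f$ inside $\filled^{[1]}_a$) gives preperiods less than $6p_1$ for these bushes, because they correspond to preperiodic level-$1$ structure of $f_{1,a}$ sitting near its boundary $\alpha$-class point (which equals $\alpha$ itself in the period-doubling case and a preperiodic $\alpha$-type point in general). Disjointness of $\bush^{[2]}_i\cup\bush^{[2]}_{i+1}$ from $\ell_1\cup\ell_2$ follows from the Markov-partition structure: the $\ell_j$-endpoints sit inside lifts of $\bush^{[1]}_b$, which occupy Markov cells disjoint from the chosen level-$2$ cells.

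The principal technical difficulty is the uniform bookkeeping of preperiods across all satellite combinatorics of bounded type. The constants $6p_1$ and $6p_2$ provide a comfortable margin, but verifying them requires a case analysis of how the satellite flower of $f$ at $\alpha$ interacts with the level-$2$ renormalization of $f_{1,a}$. The period-doubling case $p_1=2$ is essentially direct inspection of the Markov tableau; the general satellite case is analogous once one keeps track of the cyclic ordering of petals around $\alpha$ and the position of $\bush^{[1]}_a$ among them. Choosing preperiods $\le 6p_2$ (rather than a minimal bound) is what makes the case analysis uniform and ensures genuine topological separation rather than a merely homotopic one.
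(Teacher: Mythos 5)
Your overall strategy — pass to the superattracting model, produce lifts of $\bush^{[1]}_b$ from preimages that have gone through the critical point, and read off disjointness from the Markov partition — is the right philosophy and mirrors how the paper treats other statements in \S2. But there is a genuine gap at the crux of the lemma: you assert that the geodesic continua $\ell_1,\ell_2$ ``form topological barriers separating $\bush^{[1]}_a$ from $\alpha$'' without supplying a reason. Two lifts of $\bush^{[1]}_b$ lying somewhere inside $\filled^{[1]}_a$ do not, in general, straddle the arc from $\bush^{[1]}_a$ to $\alpha$, so the geodesic connecting them need not cross that arc at all. The mechanism that forces the straddling — and which the paper makes the centerpiece of the proof — is that each $\ell_j$ is chosen to pass through a \emph{critical point} of $f^{6p_2}$: the paper identifies the level-$2$ periodic bush $\bush^{[2]}_\gamma\subset\bush^{[1]}_a$ closest to $\alpha$, takes the chain of its lifts $\bush^{[2]}_t\subset\bush^{[1],(tp_1)}_a$ marching towards $\alpha$, and notes that $f^{6p_2}$ has critical points inside $\bush^{[2]}_1$ and $\bush^{[2]}_4$ (each lift reaches a level-$2$ bush through $0$ within $4p_1+p_2+1\le 6p_2$ steps). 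At such a critical point the tree $\Hub^{(6p_2)}_f$ branches, which is exactly what produces two preimage components $\bush'_j,\bush''_j$ of $\bush^{[1]}_b$ on opposite sides of the arc and makes the geodesic $\ell_j$ through them a separating continuum. The intermediate level-$2$ bushes are then simply $\bush^{[2]}_2,\bush^{[2]}_3$, with preperiods $2p_1,3p_1<6p_1$, and disjointness from $\ell_1\cup\ell_2$ is immediate from the chain structure.

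Put differently: you gesture at the critical point when you say ``once $k$ is large enough that $f^k|_{\filled^{[1]}_a}$ has passed through the critical point several times, there are multiple geometrically distinct lifts,'' but you never localize where those critical preimages sit relative to the $\alpha$--$\bush^{[1]}_a$ arc, and therefore never justify that two lifts can be chosen on opposite sides of it, nor that the resulting geodesics are disjoint from the designated $\bush^{[2]}_i,\bush^{[2]}_{i+1}$. The ``uniform bookkeeping'' you flag as the principal difficulty is in fact dispatched in one line once the $\bush^{[2]}_t$ chain is set up; what actually needs proving — and what you leave unproved — is the branching at a specified critical preimage, which is what the paper's explicit construction delivers.
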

 \noindent We remark that $\ell,\bush_{i_1}^{[2]},\bush^{[2]}_{i_2}$ are within $ \bush^{[1],(6 p_2)}_a$.
 \begin{proof}
Let $\bush_\gamma^{[2]}\subset \bush^{[1]}_a$ be the level $2$ periodic bush closest to $\alpha$; i.e. $(\Hub_f\cap \bush^{[1]}_a)\setminus \bush^{[2]}_\gamma$ and $\alpha$ are in different components of $\Hub_f\setminus \bush_\gamma^{[2]}$. Let the $\bush_{i(t)}^{[2]}\subset  \bush^{[1],(tp_1)}_a$ be the lifts of $\bush_\gamma^{[2]}$ under $f^{t p_1}$ towards $\alpha$; i.e. each  $\bush_{i(t)}^{[2]}$ separates $\alpha$ from  $\bush^{[1],(tp_1-p_1)}_a$. 

Since $f^{6p_2}$ has critical points in $\bush^{[2]}_{i(1)}$ and in $\bush_{i(4)}^{[2]}$, we can select $\ell_1$ and $\ell_2$ passing through these critical points such that $\ell_1,\ell_2$ connect preperiodic lifts $\bush',\bush''$  and $\bush''',\bush^\rn{4}$ of $\bush^{[1]}_b$. Then $\ell_1, \ell_2$ separate $\bush^{[1]}_a$ from $\alpha$, and $\ell_1\cup \ell_2$ separate $\bush^{[2]}_{i(2)},\bush^{[2]}_{i(3)}$ from $\alpha$ and $\bush^{[1]}_a$; i.e.~we can take $\bush^{[2]}_{i_1},\bush^{[2]}_{i_2}$ to be $\bush^{[2]}_{i(2)},\bush^{[2]}_{i(3)}$.
 \end{proof}

\subsection{Invariant arc diagrams}
\label{sss:InvArcDiagr} In this subsection, we will discuss invariant up to homotopy arc diagrams of ql maps. Arc diagrams endowed with weights will naturally appear from the thin-thick decompositions of the dynamical planes of $\psi^\bullet$-ql maps, see~\S\ref{ss:WAD}, \S\ref{ss:Width+WAD}. 

\subsubsection{Arc diagrams}\label{sss:AD}Consider a hyperbolic open Riemann surface $S$ of finite type without cusps. We endow $S$ with its ideal boundary $\partial^i S$. This naturally makes $V\coloneqq \overline S\equiv S\cup \partial^i S$ a compact surface.

 A \emph{path} (closed or open) $\ell$ in $S$ is an embedded (closed or open) interval $\ell\colon I \to S$. 

An open path $\gamma\colon (0,1)\to S$ is \emph{proper} if it extends to $\gamma\colon [0,1]\to V\equiv S\cup \partial^i S $ with $\gamma\{0,1\}\subset \partial^i S$. 
Two proper paths $\gamma_0,\gamma_1$ in $S$ are homotopic if there is a homotopy $\gamma_t$ among proper paths. Similarly, \emph{proper curves} and their homotopy are defined.

An \emph{arc} on $S$ is a class of properly homotopic paths, $\alpha=[\gamma]$. A curve $\gamma$ is \emph{trivial} if it can be represented in an arbitrary small neighborhood of $\partial^i S$. Two different arcs are \emph{non-crossing} if they can be represented by non-crossing paths. 

An \emph{arc diagram} (AD) is a family of non-trivial pairwise non-crossing arcs $A=\{\alpha_i\}$. A \emph{weighted arc diagram} (WAD) $\AA=\sum_{ \alpha_i \in A} w_i \alpha_i,\ w_i\in \R_{>0}$ is an arc diagram endowed with positive weights.

\subsubsection{Arc diagrams of ql maps}

Consider a ql map $f\colon X\to Y$ and assume that it is $n+1$ times DH renormalizable.

An arc diagram $A=\{\alpha_i\}$ on $X\setminus \bbush^{[n]}$ is \emph{horizontal} if every $\alpha_i$ connects components of $\bbush^{[n]}$. A horizontal arc diagram $A=\{\alpha_i=[\gamma_i]\}$ is called \emph{invariant} if every $\alpha_i$ can be represented up to a proper homotopy in $X\setminus \bbush^{[n]}$ in an arbitrary small neighborhood of $f^{-1}\left(\bbush^{[n]}\cup \Gamma \right)$, where $\Gamma=\bigcup_i \gamma_i$. In other words, every $[\gamma]\in A$ can be presented as a concatenation 
\begin{equation}
\label{eq:form for gamma:base}
\gamma = \ell_0\#\gamma_1\# \ell_1 \#\gamma_2\# \dots \#\gamma_s\#\ell_s,
\end{equation}
where
\begin{itemize}
\item $[\gamma_j]\in f^*(A)$ are proper arcs in $X\setminus \bbush^{[n],(1)}=f^{-1}\left(Y\setminus \bbush^{[n]}\right)$; and
\item every component of $\ell_i\setminus \bbush^{[n],(1)}$ is trivial in $X\setminus \bbush^{[n],(1)}$ (with respect to a proper homotopy, see~\S\ref{sss:AD}).
\end{itemize} (In other words, the $\ell_i$ are contractible into $\bbush^{[n],(1)}$.) Here $f^*(A)$ is the pullback of $A$. Figure~\ref{Fig:exm Inv Arc Diagr} gives an example of an invariant arc diagram.

\begin{figure}[t!]
\[\begin{tikzpicture}[xscale=2,yscale=1.5]

\begin{scope}[rotate =0,shift={(1.3,0)},scale=0.6]
\draw[line width=0.8mm] (-0.5,0)--(0.5,0); 
\draw[line width=0.8mm] (-0.2,-0.2)--(-0.2,0.2);
\draw[line width=0.8mm] (0.2,-0.2)--(0.2,0.2);

\coordinate (la1) at (-0.2,0.2);
\coordinate (la0) at (-0.2,-0.2);

\draw[line width=0.4mm] (4,0.7) edge[->,bend right]node[above]{$f$} (0.7,0.7);

\node[above,scale=1.2] at(0,0.2){$\bush^{[1]}_0$};

 \end{scope}
  
  \begin{scope}[rotate =120,shift={(1.3,0)}, scale=0.6]
\draw[line width=0.8mm] (-0.5,0)--(0.5,0); 
\draw[line width=0.8mm] (-0.2,-0.2)--(-0.2,0.2);
\draw[line width=0.8mm] (0.2,-0.2)--(0.2,0.2);

\coordinate (lb1) at (-0.2,0.2);
\coordinate (lb0) at (-0.2,-0.2);
  \end{scope}

  \begin{scope}[rotate =240,shift={(1.3,0)}, scale=0.6]
\draw[line width=0.8mm] (-0.5,0)--(0.5,0); 
\draw[line width=0.8mm] (-0.2,-0.2)--(-0.2,0.2);
\draw[line width=0.8mm] (0.2,-0.2)--(0.2,0.2);
%\node[above,scale=1.2] at(0,0.2){$\bush_0$};

\coordinate (lc1) at (-0.2,0.2);
\coordinate (lc0) at (-0.2,-0.2);
  \end{scope}

 \draw[line width=0.3mm, red] (la1) edge[bend left=10] node[above] {$\alpha$} (lb0);

 \draw[line width=0.3mm, blue] (lc0) edge[bend right=10] node[left] {$\beta$}  (lb1);

\begin{scope}[shift={(4.5,0)}]

\begin{scope}[rotate =0,shift={(1.3,0)},scale=-0.6]
\draw[line width=0.8mm] (-0.5,0)--(0.5,0); 
\draw[line width=0.8mm] (-0.2,-0.2)--(-0.2,0.2);
\draw[line width=0.8mm] (0.2,-0.2)--(0.2,0.2);

\draw[line width=0.9mm,gray] (0.5,0) -- (0.9,0); 
\draw[line width=0.9mm,gray] (0.65,-0.2) -- (0.65,0.2); 

\coordinate (ra1) at (0.65,-0.2) ;
\coordinate (ra0) at (0.65,0.2) ;

\node[above,scale=1.2] at(0,-0.2){$\bush^{[1],(1)}_0$};

  \end{scope}
  
  \begin{scope}[rotate =120,shift={(1.3,0)}, scale=0.6]
\draw[line width=0.8mm] (-0.5,0)--(0.5,0); 
\draw[line width=0.8mm] (-0.2,-0.2)--(-0.2,0.2);
\draw[line width=0.8mm] (0.2,-0.2)--(0.2,0.2);

\coordinate (rb1) at (-0.2,0.2);
\coordinate (rb0) at (-0.2,-0.2);
  \end{scope}

  \begin{scope}[rotate =240,shift={(1.3,0)}, scale=0.6]
\draw[line width=0.8mm] (-0.5,0)--(0.5,0); 
\draw[line width=0.8mm] (-0.2,-0.2)--(-0.2,0.2);
\draw[line width=0.8mm] (0.2,-0.2)--(0.2,0.2);

\coordinate (rc1) at (-0.2,0.2);
\coordinate (rc0) at (-0.2,-0.2);
  \end{scope}

\draw[line width=0.3mm, red] (rc1) edge[bend left=10] node[above] {$\tilde \alpha$} (ra0);

 \draw[line width=0.3mm, blue] (rb0) edge[bend right=10] node[above] {$\tilde \beta$}  (ra1);

\end{scope}
\end{tikzpicture}\]

\caption{The arc diagram $\{[\alpha], [\beta]\}$ is invariant. Here $\widetilde \alpha, \widetilde \beta$ are lifts of $\alpha,\beta$, the map $f$ is the Rabbit map tuned with the Feigenbaum map, peripheral lifts of $\alpha,\beta$ and preperiodic bushes are omitted on the right side. The arc diagram is invariant because $\alpha$ can be properly homotoped into $\widetilde \beta$ and $P\coloneqq \bush^{[1],(1)}_0\setminus \bush^{[1]}_0$ while $\beta$ can be properly homotoped into $\widetilde \beta, P, \widetilde \alpha$.}

\label{Fig:exm Inv Arc Diagr}
\end{figure}
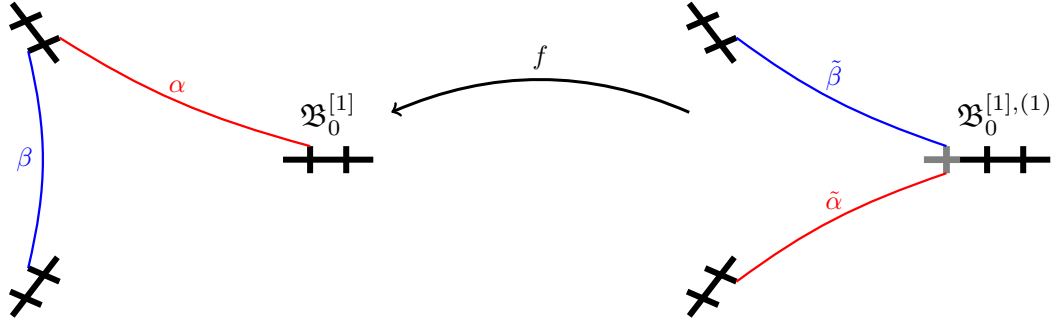

\begin{lem}[Alignment with $\Hub_f$, following~{\cite[\S4]{K}}]\label{lem:Alignment}
If $A=\{\alpha_i\}$ is an invariant horizontal AD on $X\setminus \bbush^{[n]}$, where $f\colon X\to Y$ is a ql map, then $A$ is \emph{aligned with the Hubbard continuum} $\Hub_f$: every $\alpha_i$ can be represented in an arbitrary small neighborhood of a geodesic continuum $T_i\subset \Hub_f\setminus \bbush^{[n]}$ connecting components of $\bbush^{[n]}$.
\end{lem}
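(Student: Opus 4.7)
The plan is to iterate the invariance relation~\eqref{eq:form for gamma:base} and identify the resulting combinatorial data with the Hubbard continuum via the superattracting model of~\S\ref{sss:psib:super attr model}.

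First, I would apply~\eqref{eq:form for gamma:base} repeatedly. Each piece $[\gamma_j]\in f^*(A)$ is a pullback of some $[\alpha_{i'}]\in A$, so applying invariance to $\alpha_{i'}$ and lifting by $f$ expresses $\gamma_j$ as a concatenation of arcs in $(f^2)^*(A)$ (proper in $X\setminus \bbush^{[n],(2)}$) interleaved with paths whose components off $\bbush^{[n],(2)}$ are trivial. After $m$ iterations, each $\alpha_i = [\gamma_i]$ is represented in $X\setminus \bbush^{[n]}$ by a concatenation
\[
\gamma_i \;=\; \ell_0^{(m)} \# \gamma^{(m)}_1 \# \ell_1^{(m)} \# \cdots \# \ell_s^{(m)},
\]
with $[\gamma^{(m)}_j]\in (f^m)^*(A)$ proper in $X\setminus \bbush^{[n],(m)}$ and each component of $\ell_j^{(m)}\setminus \bbush^{[n],(m)}$ trivial. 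Thus, modulo pieces that can be pushed into arbitrarily small neighborhoods of $\bbush^{[n],(m)}$, the homotopy class of $\gamma_i$ is controlled by a progressively finer Markov refinement of the dynamical plane.

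Second, I would transfer this combinatorial data to the superattracting model $f_\circ$. By~\S\ref{sss:psib:super attr model}, the pairs $(X, \bbush^{[n],(m)})$ and $(X_\circ, \bbush^{[n],(m)}_\circ)$ define equivalent Markov partitions. In the superattracting picture $\Hub_{f_\circ}$ is a finite topological tree, and every proper homotopy class of arcs in $X_\circ \setminus \bbush^{[n],(m)}_\circ$ connecting two bushes is realized by a unique simple edge-path in $\Hub_{f_\circ} \setminus \bbush^{[n],(m)}_\circ$. The combinatorics of the concatenation therefore selects a unique geodesic $T_i^\circ \subset \Hub_{f_\circ}$ whose endpoints lie in the two periodic bushes connected by $\alpha_i$. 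Pulling this back through the semiconjugacy $\pi\colon \filled_f \to \filled_f^\com$ of~\S\ref{sss:CombHT} and using~\eqref{eq:geod contin}, I obtain the required geodesic continuum $T_i \subset \Hub_f \setminus \bbush^{[n]}$; the first step then shows that $\gamma_i$ is properly homotopic in $X\setminus \bbush^{[n]}$ into an arbitrarily small neighborhood of $T_i$.

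The hard part will be the satellite case, where level-$n$ bushes may touch at $\alpha$-type cut points and assemble into flowers, producing extra homotopy classes of arcs that can wind around the center of a flower. I must rule out that an invariant arc admits such a non-trivial winding not represented by any geodesic continuum of $\Hub_f$. Here Lemma~\ref{lem:bush:prim comb} is decisive: it supplies preperiodic bushes $\bush'_1, \bush''_1, \bush'_2, \bush''_2$ together with geodesic continua $\ell_1, \ell_2 \subset \Hub_f$ cutting each flower into combinatorially bounded corridors separating the center from the rest of $\bush^{[1]}_a$. Any invariant arc crossing such a flower must then respect these separations, which forces its class to coincide with that of a genuine geodesic continuum in $\Hub_f$ and completes the alignment.
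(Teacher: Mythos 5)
Your proposal takes a genuinely different route from the paper, but it has a gap at the crucial step.

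The paper's proof is a \emph{complementary} argument: it picks a proper path $\ell$ from $\partial Y$ to one of the bushes inside $Y' = Y\setminus(\bbush^{[n]}\cup\Gamma)$, where $\Gamma$ is the support of the arc diagram; it then takes \lq\lq legal pullbacks'' of $\ell$ (lifts through $f$ extended across $Y\setminus X$). Invariance of $A$ guarantees these pullbacks can again be realized inside $Y'$, and by checking on the superattracting model one sees that the iterated pullbacks of $\ell$ realize \emph{all} periodic rays landing on $\bbush^{[n]}$. Since the arc diagram $\Gamma$ is disjoint from every periodic ray up to proper homotopy, each arc is trapped in a complementary region bounded by rays and $\Hub_f$, which is exactly alignment. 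The point is that the rays, not the tree, are what partition the complement and confine $\Gamma$.

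Your approach tries to read the alignment off directly from the iterated Markov decomposition~\eqref{eq:form for gamma:base}, transported to the superattracting model. The key claim in your second step — that \lq\lq every proper homotopy class of arcs in $X_\circ \setminus \bbush^{[n],(m)}_\circ$ connecting two bushes is realized by a unique simple edge-path in $\Hub_{f_\circ} \setminus \bbush^{[n],(m)}_\circ$'' — is false as stated. Already with three pairwise disjoint little Julia sets in a disk, an arc from the first to the second that winds once around the third represents a proper homotopy class not realized by any arc inside the tree. So the Markov data alone does \emph{not} pin the class to a tree geodesic; the whole content of the lemma is that invariance rules out such windings, and your argument has no mechanism that does this. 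Your third paragraph recognizes the winding problem but restricts it to the satellite case, whereas it is present already for primitive combinatorics. The tool you reach for, Lemma~\ref{lem:bush:prim comb}, provides separations inside the filled Julia set and is stated only for level-$1$ bushes; it says nothing about homotopy classes of proper paths in $X\setminus\bbush^{[n]}$ and does not provide the confinement you need. To close the gap you would still have to import something like the paper's ray argument (or some other global partition of $Y\setminus\Hub_f$) showing that no invariant arc can cross a periodic ray landing on the bushes.
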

\begin{proof}
Write $\alpha_i=[\gamma_i]$ and consider $Y'\coloneqq Y\setminus \left(\bbush^{[n]}\cup  \Gamma \right)$, where $\Gamma=\bigcup_i \gamma_i$. Then one of the little bushes $\bush^{[n]}_i$ is accessible from the outermost component of $\partial Y'$. Therefore, we can select a proper path $\ell \subset Y'$ connecting $\partial Y$ and $\bbush^{[n]}$. For every such a path $\ell$, its \emph{legal pullback} $\widetilde \ell$ is any of its lift connecting $\bbush^{[n]}$ and $\partial X$ and concatenated by a path in $Y\setminus X$ so that $\widetilde \ell\subset Y\setminus \bbush^{[n]}$ is a proper path connecting $\partial Y$ and $\bbush^{n}$. Since $A$ is invariant, $\widetilde \ell$ can be again represented in $Y'$ up to a proper homotopy in $Y\setminus \bbush^{[n]}$.

It is well-known that iterated pullbacks of $\ell$ can realize all periodic rays landing at $\bbush^{[n]}$ up to a proper homotopy in $Y\setminus \bbush^{[n]}$. (It is sufficient to verify the property for the superattracting model~\S\ref{sss:psib:super attr model}.)  Therefore, $A$ is aligned with $\Hub_f$.
\end{proof}

\subsubsection{Genuine periodic arcs} \label{sss:periodic arcs}Let $A=\{\alpha_i\}$ be a horizontal invariant AD on $Y\setminus \bbush^{[n]}$ of $f\colon X\to Y$. For every $m\ge 1$, we can present $[\gamma]\in A$ as a concatenation
\begin{equation}
\label{eq:form for gamma}
\gamma = \ell_0\#\gamma_1\# \ell_1 \#\gamma_2\# \dots \#\gamma_s\#\ell_s
\end{equation}
(compare with~\eqref{eq:form for gamma:base}), where 
\begin{itemize}
\item $[\gamma_j]\in\big( f^{m}\big)^*(A)$ are proper arcs in $f^{-m}(Y)\setminus \bbush^{[n],(m)}=f^{-m}\left(Y\setminus \bbush^{[n]}\right)$; and
\item every component of $\ell_i\setminus \bbush^{[n],(m)}$ is trivial in $X^m\setminus \bbush^{[n],(m)}$  (with respect to a proper homotopy, see~\S\ref{sss:AD}) .
\end{itemize}
We call~\eqref{eq:form for gamma} the \emph{decomposition of $\gamma$ rel $\bbush^{[n],(m)}$.}  We define the \emph{expansivity number} \[\ExpN_A(f^m, [\gamma])\coloneqq \min \{s \mid \ s \text{ is in \eqref{eq:form for gamma}}\},\]
where the minimum is taking over all paths possible $\gamma$ as above.

We call an arc $\alpha\in A$ a \emph{genuine periodic} if $\ExpN_A(f^m,\alpha)=1$ for all $m\ge 1$.

\begin{figure}[t!]
\[\begin{tikzpicture}[xscale=2,yscale=1.5]

\begin{scope}[rotate =0,shift={(1.3,0)},scale=0.6]
\draw[line width=0.8mm] (-0.5,0)--(0.5,0); 
\draw[line width=0.8mm] (-0.2,-0.2)--(-0.2,0.2);
\draw[line width=0.8mm] (0.2,-0.2)--(0.2,0.2);

\coordinate (la1) at (-0.2,0.2);
\coordinate (la0) at (-0.2,-0.2);

\draw[line width=0.4mm] (4,0.7) edge[->,bend right]node[above]{$f$} (-0.,1.8);

\node[above,scale=1.2] at(0,0.2){$\bush^{[1]}_0$};

 \end{scope}
  
  \begin{scope}[rotate =90,shift={(1.3,0)}, scale=0.6]
\draw[line width=0.8mm] (-0.5,0)--(0.5,0); 
\draw[line width=0.8mm] (-0.2,-0.2)--(-0.2,0.2);
\draw[line width=0.8mm] (0.2,-0.2)--(0.2,0.2);

\coordinate (lb1) at (-0.2,0.2);
\coordinate (lb0) at (-0.2,-0.2);

\coordinate (lbb) at (-0.5,0);
  \end{scope}

  \begin{scope}[rotate =180,shift={(1.3,0)}, scale=0.6]
\draw[line width=0.8mm] (-0.5,0)--(0.5,0); 
\draw[line width=0.8mm] (-0.2,-0.2)--(-0.2,0.2);
\draw[line width=0.8mm] (0.2,-0.2)--(0.2,0.2);
%\node[above,scale=1.2] at(0,0.2){$\bush_0$};

\coordinate (lc1) at (-0.2,0.2);
\coordinate (lc0) at (-0.2,-0.2);
  \end{scope}
  
    \begin{scope}[rotate =270,shift={(1.3,0)}, scale=0.6]
\draw[line width=0.8mm] (-0.5,0)--(0.5,0); 
\draw[line width=0.8mm] (-0.2,-0.2)--(-0.2,0.2);
\draw[line width=0.8mm] (0.2,-0.2)--(0.2,0.2);
%\node[above,scale=1.2] at(0,0.2){$\bush_0$};

\coordinate (ld1) at (-0.2,0.2);
\coordinate (ld0) at (-0.2,-0.2);
\coordinate (ldd) at (-0.5,0);

  \end{scope}

 \draw[line width=0.3mm, red] (la1) edge[bend left=10] node[above right] {$\alpha_1$} (lb0);
  \draw[line width=0.3mm, red] (lb1) edge[bend left=10] node[above left] {$\alpha_2$} (lc0);

  \draw[line width=0.3mm, red] (lc1) edge[bend left=10] node[below left] {$\alpha_3$} (ld0);
    \draw[line width=0.3mm, red] (ld1) edge[bend left=10] node[below right] {$\alpha_4$} (la0);

% \draw[line width=0.3mm, blue] (lc0) edge[bend right=10] node[left] {$\beta$}  (lb1);

 \draw[line width=0.3mm, blue] (lbb) edge[] node[left] {$\beta$}  (ldd);

\begin{scope}[shift={(4.5,-1)}]

\begin{scope}[rotate =0,shift={(1.3,0)},scale=-0.6]
\draw[line width=0.8mm] (-0.5,0)--(0.5,0); 
\draw[line width=0.8mm] (-0.2,-0.2)--(-0.2,0.2);
\draw[line width=0.8mm] (0.2,-0.2)--(0.2,0.2);

\draw[line width=0.9mm,gray] (0.5,0) -- (0.9,0); 
\draw[line width=0.9mm,gray] (0.65,-0.2) -- (0.65,0.2); 

\coordinate (ra1) at (0.65,-0.2) ;
\coordinate (ra0) at (0.65,0.2) ;

\coordinate (raa) at (0.9,0) ;

\node[above,scale=1.2] at(0,-0.2){$\bush^{[1],(1)}_0$};

  \end{scope}
  
   \begin{scope}[rotate =90,shift={(1.3,0)}, scale=0.6]
\draw[line width=0.8mm] (-0.5,0)--(0.5,0); 
\draw[line width=0.8mm] (-0.2,-0.2)--(-0.2,0.2);
\draw[line width=0.8mm] (0.2,-0.2)--(0.2,0.2);

\coordinate (rb1) at (-0.2,0.2);
\coordinate (rb0) at (-0.2,-0.2);

\coordinate (rbb) at (-0.5,0);
  \end{scope}

   \begin{scope}[rotate =180,shift={(1.3,0)}, scale=0.6]
\draw[line width=0.8mm] (-0.5,0)--(0.5,0); 
\draw[line width=0.8mm] (-0.2,-0.2)--(-0.2,0.2);
\draw[line width=0.8mm] (0.2,-0.2)--(0.2,0.2);

\coordinate (rc1) at (-0.2,0.2);
\coordinate (rc0) at (-0.2,-0.2);
\coordinate (rcc) at (-0.5,0);

  \end{scope}
  
   \begin{scope}[rotate =270,shift={(1.3,0)}, scale=0.6]
\draw[line width=0.8mm] (-0.5,0)--(0.5,0); 
\draw[line width=0.8mm] (-0.2,-0.2)--(-0.2,0.2);
\draw[line width=0.8mm] (0.2,-0.2)--(0.2,0.2);

\coordinate (rd1) at (-0.2,0.2);
\coordinate (rd0) at (-0.2,-0.2);

  \end{scope}

\draw[line width=0.3mm, blue] (raa) edge node[blue,above ] {$\tilde \beta$} (rcc);

 \draw[line width=0.3mm, red] (rb0) edge[bend right=10] node[above right] {$\tilde \alpha_2$}  (ra1);

 \draw[line width=0.3mm, red] (rc0) edge[bend right=10] node[above left] {$\tilde \alpha_3$}  (rb1);

 \draw[line width=0.3mm, red] (rd0) edge[bend right=10] node[below left] {$\tilde \alpha_4$}  (rc1);
 
  \draw[line width=0.3mm, red] (ra0) edge[bend right=10] node[below right] {$\tilde \alpha_1$}  (rd1);

\end{scope}
\end{tikzpicture}\]

\caption{An invariant arc diagram $A=\{\alpha_1,\alpha_2,\alpha_3, \alpha_4, \beta\}$ is depicted for the $1/4$ Rabbit map tuned with the Feigenbaum map. Here, $\ExpN_A(f,\alpha_i)=1$ while $\ExpN_A(f,\beta)=2$ because $\beta$ must travel through $\tilde \alpha_2$ and $\tilde \alpha_1$ ($\beta$ can not cross-intersect its lift $\widetilde \beta$). Note that if we replace in $A$ the arc $\beta$ with its orthogonal arc $\beta^\perp$ connecting $\bush^{[1]}_0$ and $\bush^{[1]}_2$, then the new arc diagram $A^\new$ will not be invariant: $\widetilde \beta^\perp=\beta$ will completely block $\beta^\perp$.}

\label{Fig:PeriodicArcs}
\end{figure}
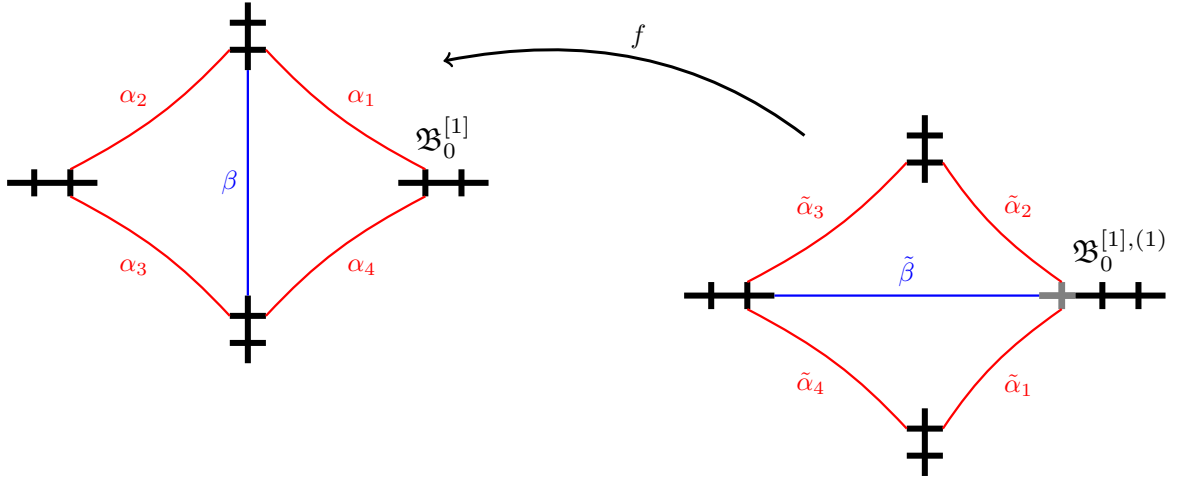

\begin{lem}[Expansivity of $  f\mid {\big[\Hub_f\setminus \bbush^{[n]}\big]}$] 
\label{lem:per arcs}Consider an invariant AD $A$ and an arc $\gamma\in A$ aligned with a proper geodesic continuum $T_\gamma$ of $\Hub_f\setminus \bbush^{[n]}$. Then
\begin{itemize}
\item $\gamma$ is genuine periodic if and only if $T_\gamma $ connects two neighboring bushes $\bush^{[n]}_a, \bush^{[n]}_{a+1}$ (with respect to the cyclic order) of a periodic satellite flower $ \bush^{[n-1]}_j\supset \bush^{[n]}_a, \bush^{[n]}_{a+1}$ of level $n-1$. In particular, the $n$-th renormalization of $f$ is satellite.
\item if $\gamma$ is not genuine periodic, then 
\begin{equation}
\label{eq:lem:per arcs} \ExpN_A(f^{p_n}, \gamma) \ge 2.
\end{equation}
\end{itemize}
\end{lem}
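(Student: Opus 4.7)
The plan is to prove both bullets; the second bullet gives the forward implication of the first by contrapositive, so the separate work for the first bullet is only its ``if'' direction.

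For the ``if'' direction, suppose $T_\gamma$ connects two neighbors $\bush^{[n]}_a, \bush^{[n]}_{a+1}$ of a satellite flower $\bush^{[n-1]}_j$ of level $n-1$. The existence of this flower means the first renormalization of $f_{n-1,j}$ is satellite, which is precisely the $n$th renormalization of $f$, proving the ``In particular'' clause. Let $\xi$ be the $\alpha$-fixed point of $f^{p_{n-1}}\colon\filled^{[n-1]}_j\selfmap$; it is repelling of $f$-period $p_{n-1}$, and $f^{p_n}$ fixes $\xi$ as well as setwise each level $n$ petal incident to it. By Lemma~\ref{lem:Alignment} I may represent $\gamma$ by a path contained in an arbitrarily small disk around $\xi$. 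For any $m\ge 1$ I take the univalent germ of $f^{-m}$ sending $f^m(\xi)$ back to $\xi$; it identifies the local flower combinatorics near $f^m(\xi)$ with that near $\xi$. By invariance of $A$, there is an arc $\delta\in A$ at $f^m(\xi)$ joining the images of the two petals; its lift under this branch of $f^{-m}$ is an arc $\gamma_1$ near $\xi$ between $\bush^{[n]}_a$ and $\bush^{[n]}_{a+1}$ that is properly homotopic to $\gamma$. Filling its two endpoints by paths living in the two petals (which are trivial rel $\bbush^{[n],(m)}$) yields the decomposition~\eqref{eq:form for gamma} with $s=1$. Hence $\ExpN_A(f^m,\gamma)=1$ for every $m\ge 1$.

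For the second bullet, I pass to the superattracting model of level $n+1$ (\S\ref{sss:psib:super attr model}) to work with an explicit Hubbard tree. The negation of the flower-adjacency hypothesis splits into three configurations: $T_\gamma$ connects level $n$ bushes lying (i) in different level $n-1$ bushes, (ii) in non-adjacent petals of one level $n-1$ satellite flower, or (iii) inside a level $n-1$ bush whose $n$th renormalization is primitive. In each case $T_\gamma$ contains in its interior a non-trivial open sub-arc of $\Hub_f\setminus\bbush^{[n]}$ avoiding every level $n-1$ flower joint. I would then invoke the Markov expansion property of $f^{p_n}$ on the Hubbard tree of $f_n$: the first generation $\bbush^{[n],(p_n)}\setminus\bbush^{[n]}$ already meets every such non-trivial open sub-arc. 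Consequently $T_\gamma$ crosses a preperiodic bush of height $\le p_n$, and any representative of $\gamma$ properly homotopic to it in $X\setminus\bbush^{[n]}$ must essentially cross the corresponding component of $\bbush^{[n],(p_n)}\setminus\bbush^{[n]}$, forcing $s\ge 2$ in any decomposition~\eqref{eq:form for gamma} rel $\bbush^{[n],(p_n)}$. This gives $\ExpN_A(f^{p_n},\gamma)\ge 2$.

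The main obstacle is justifying the Markov expansion claim: that $\bbush^{[n],(p_n)}\setminus\bbush^{[n]}$ meets every non-trivial open edge of $\Hub_f\setminus\bbush^{[n]}$ outside the flower joints. I would carry this out inside the superattracting model by analyzing the action of $f^{p_n}=f_n$ on the edges of the Hubbard tree of $f_n$: since $f_n$ is a quadratic-like map with connected Julia set, every edge either lies in $\bbush^{[n]}$ or is mapped by $f_n$ onto a concatenation crossing a little Julia set, producing the required preperiodic landings within one period $p_n$. The three combinatorial cases (i)--(iii) must each be checked separately, but the argument is uniform once the Markov property on edges is established.
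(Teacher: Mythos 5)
Your structural plan is sound, and the ``if'' direction of the first bullet (showing that a geodesic between neighboring petals of a satellite flower represents a genuine periodic arc, via the local picture at the $\alpha$-fixed point) matches the paper. Your treatment of the cases where $T_\gamma$ is \emph{not} contained in any satellite flower---your cases (i) and (iii)---also agrees with the paper, which simply remarks that \eqref{eq:lem:per arcs} ``can be easily proven for a superattracting model'' there, i.e.\ via the Markov-expansion observation you invoke.

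However, there is a genuine gap in case (ii), where $T_\gamma$ lies inside a satellite flower $\bush^{[n-1]}_j$ but connects two \emph{non-neighboring} petals $\bush^{[n]}_a,\ \bush^{[n]}_b$. There $T_\gamma$ runs through the common touching point $\xi$ (the $\alpha$-fixed point of $f_{n-1,j}$) and is contained in $\filled^{[n]}_a\cup\filled^{[n]}_b$. Since $\filled^{[n]}_a$ is $f^{p_n}$-invariant, $\bbush^{[n],(p_n)}\cap\filled^{[n]}_a=f_{n,a}^{-1}(\bush^{[n]}_a)$, which is a single \emph{periodic} component (connected, because $\bush^{[n]}_a$ contains the critical value of $f_{n,a}$); similarly for $b$. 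So $T_\gamma$ meets no preperiodic component of $\bbush^{[n],(p_n)}$ at all---the only components it touches are the two periodic ones at its ends---and the step ``consequently $T_\gamma$ crosses a preperiodic bush of height $\le p_n$'' fails here. Indeed a naive decomposition $\gamma=\ell_0\#\gamma_1\#\ell_1$, with $\ell_0,\ell_1$ inside the periodic bushes and $\gamma_1$ running from $\bush^{[n],(p_n)}_a$ to $\bush^{[n],(p_n)}_b$, has $s=1$; what must be ruled out is that $[\gamma_1]\in(f^{p_n})^*A$. The paper does this by an entirely different mechanism: a lift $T'$ of $T_\gamma$ under $f^{m}$, $m=kp_{n-1}<p_n$, \emph{cross-intersects} $T_\gamma$; since the arcs of $(f^{m})^*A$ are pairwise non-crossing and include one aligned with $T'$, no arc of $(f^{m})^*A$ can be aligned with $T_\gamma\setminus(\bush^{[n],(m)}_a\cup\bush^{[n],(m)}_b)$, which forces $\ExpN_A(f^{m},\gamma)\ge 2$. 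This cross-intersection idea---illustrated by the arc $\beta$ and its lift $\widetilde\beta$ in Figure~\ref{Fig:PeriodicArcs}---is the essential ingredient your proposal is missing for case (ii).
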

See Figure~\ref{Fig:PeriodicArcs} for illustration.
\begin{proof}Clearly, if $T_\gamma$ is in a satellite flower $\bush^{[n-1]}_j$ and connects two neighboring bushes $\bush^{[n]}_a,\ \bush^{[n]}_{a+1}$, then $\gamma$ is genuine periodic. It is also clear that~\eqref{eq:lem:per arcs} holds unless $T_\gamma$ is in a satellite flower $\bush^{[n-1]}_j$. (It can be easily proven for a superattracting model~\S\ref{sss:psib:super attr model}.)

Assume $T_\gamma$ is in a satellite flower $\bush^{[n-1]}_j$ but connects two non-neighboring bushes $\bush^{[n]}_a,\ \bush^{[n]}_b,\ |a-b|>1$. Then a certain lift $T'$ of $T_\gamma$ under $f^{m}$ with $m=kp_{n-1}<p_n$ cross-intersects $T_\gamma$.  It follows that $\big(f^{m}\big)^* A$ has an arc aligned with $T'$ but has no arc aligned with $T_\gamma\setminus \left( \bush^{[n],(m)}_a\cup \bush^{[n],(m)}_b\right)$. This implies that  $\ExpN_A(f^{m}, \gamma) \ge 2$.

\end{proof}

\subsubsection{Arcs inside and outside of satellite flowers}\label{sss:in out: sat flowers} Consider an invariant AD $A$ on $X\setminus \bbush^{[n]}$. Consider an arc $\alpha\in A$ aligned with a geodesic continuum $T_\alpha\subset \Hub_f\setminus \bbush^{[n]}$. We say that $\alpha$ is in $\bush^{[n-1]}_c$ if $T_\alpha\subset \bush^{[n-1]}_c$.

\begin{lem}
\label{lem:sss:in out:sat flower}
Consider an invariant arc diagram $A$ on $X\setminus \bbush^{[n]}$ and an arc $\alpha\in A$. Assume that $\alpha$ is not in any satellite flower $\bush^{[n-1]}_c$ of level $n-1$. Then there are two arcs \[\alpha_1,\alpha_2\in (f^{2p_n})^* (A)\hspace{0.6cm} \text{ with }\sp f^{2p_n}(\alpha_1)=f^{2p_n}(\alpha_2)=\alpha_\new \in A\]
such that $\alpha$ overflows $\alpha_1, \alpha_2$ in the following sense: any decomposition~\eqref{eq:form for gamma} of $\alpha=[\gamma]$ rel $\bbush^{[n],(2p_n)}$ contains two paths $\gamma_a, \gamma_b$ representing $\alpha_1=[\gamma_a]$ and $\alpha_2=[\gamma_b]$.
\end{lem}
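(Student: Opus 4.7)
By Lemma~\ref{lem:Alignment}, $\alpha$ is aligned with a geodesic continuum $T_\alpha\subset\Hub_f\setminus\bbush^{[n]}$ connecting two periodic level-$n$ bushes $\bush_a^{[n]},\bush_b^{[n]}$. Since $T_\alpha$ lies in no level-$(n-1)$ satellite flower, Lemma~\ref{lem:per arcs} forces $\alpha$ not to be genuine periodic, hence $\ExpN_A(f^{p_n},\alpha)\ge 2$. Consequently, every decomposition of $\alpha=[\gamma]$ rel $\bbush^{[n],(p_n)}$ contains at least $s\ge 2$ pieces $\beta_i=[\gamma_i]\in (f^{p_n})^*(A)$, with $f^{p_n}$-images $\alpha'_i:=[f^{p_n}(\gamma_i)]\in A$.

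The passage from a decomposition rel $\bbush^{[n],(p_n)}$ to one rel $\bbush^{[n],(2p_n)}$ is accomplished by lifting each decomposition of $\alpha'_i$ rel $\bbush^{[n],(p_n)}$ through $f^{p_n}|_{\gamma_i}$, refining each $\gamma_i$ into sub-arcs in $(f^{2p_n})^*(A)$. I plan to produce the required $\alpha_1,\alpha_2$ among these refined sub-arcs via the following sufficient condition: \emph{some pair $\alpha'_i,\alpha'_j$ with $i\ne j$ coincides in $A$}. Granting this, pick any arc $\delta$ in a decomposition of the common value $\alpha'_i=\alpha'_j=:\alpha_{\new}^{(1)}$ rel $\bbush^{[n],(p_n)}$; its lifts $\tilde\delta^{(i)}\subset\gamma_i$ and $\tilde\delta^{(j)}\subset\gamma_j$ through $f^{p_n}$ are two distinct arcs in $(f^{2p_n})^*(A)$ with common $f^{2p_n}$-image $f^{p_n}(\delta)=:\alpha_{\new}\in A$. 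Setting $\alpha_1:=\tilde\delta^{(i)}$ and $\alpha_2:=\tilde\delta^{(j)}$, the overflow property follows because any decomposition of $\alpha$ rel $\bbush^{[n],(2p_n)}$ refines a level-$p_n$ decomposition by homotopically unique lifting of each $\alpha'_i$-decomposition into $\gamma_i$, so $\alpha_1,\alpha_2$ are forced to appear.

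To establish the coincidence $\alpha'_i=\alpha'_j$, I would analyze the branched covering $f^{p_n}|_{T_\alpha}\colon T_\alpha\to f^{p_n}(T_\alpha)\subset\Hub_f$. Because $\bush_a^{[n]},\bush_b^{[n]}$ are fixed by $f^{p_n}$, the image is a (generally non-geodesic) path from $\bush_a^{[n]}$ back to $\bush_b^{[n]}$ in the tree, and its folds at critical points of $f^{p_n}$ lying on $T_\alpha$ (preimages of $0$ inside $\bbush^{[n],(p_n-1)}\cap T_\alpha$) force two pieces of the level-$p_n$ decomposition on opposite sides of such a fold to project onto the same arc in $A$. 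If no such $p_n$-level coincidence can be arranged, one instead works directly at level $2p_n$: the ql restriction of $f^{2p_n}$ to $\bush_a^{[n]}$ has degree $4$, which produces an extra pair of lifts of $\alpha$ attached to $\bush_a^{[n]}$ inside $T_\alpha$ by the very hypothesis that $\alpha$ is not in a satellite flower.

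The technical heart, and main obstacle, is the fold/coincidence step of the previous paragraph: one must carefully track how the branched cover $f^{p_n}|_{T_\alpha}$ interacts with the preperiodic bushes in $\bbush^{[n],(p_n)}\cap T_\alpha$ and how these folds are parsed into repeated arcs of the finite set $A$. The hypothesis that $\alpha$ is not in a level-$(n-1)$ satellite flower is essential precisely because it excludes the rigid genuine-periodic configuration isolated in Lemma~\ref{lem:per arcs}, which is exactly the configuration that would prevent any such folding.
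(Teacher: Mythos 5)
Your reduction to ``produce a pair of distinct pieces in the decomposition that map to the same arc'' is in the right spirit, and the observation that the satellite-flower hypothesis rules out the genuine-periodic configuration is correct. But the argument stops exactly where the real work is: you say you ``would analyze'' how the branched cover $f^{p_n}|_{T_\alpha}$ folds, that critical points ``force'' a coincidence, and that if this fails at level $p_n$ ``one instead works directly at level $2p_n$'' — none of which is an argument. You never establish that a critical point of some $f^m$, $m\le 2p_n$, actually lies on $T_\alpha$ so as to create the fold, and the degree-$4$ remark about the ql restriction of $f^{2p_n}$ to $\bush_a^{[n]}$ does not, by itself, produce two distinct pieces of a decomposition of $\alpha$ mapping to a common arc.

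The paper's proof supplies precisely the ingredient you are missing. Because $T_\alpha$ is not contained in any level-$(n-1)$ satellite flower, some \emph{strictly preperiodic} component $\bush^{[n],(p_n)}_d$ of $\bbush^{[n],(p_n)}$ must meet $T_\alpha$. (This is where the hypothesis is really used: inside a satellite flower the geodesic between adjacent bushes contains only periodic lifts, whereas outside one the geodesic is forced to cross a strictly preperiodic bush.) The two components $T_1,T_2$ of $T_\alpha\setminus\bbush^{[n],(2p_n)}$ adjacent to $\bush^{[n],(2p_n)}_d$ are the ``two sides of the fold.'' A strictly preperiodic bush is carried to the critical value within $m\le 2p_n$ steps, so $f^m$ identifies $T_1$ and $T_2$ with a common injective image; the arcs $\alpha_1,\alpha_2\in(f^{2p_n})^*(A)$ aligned with $T_1,T_2$ then share the image $\alpha_{\new}\in A$. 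You need to locate this preperiodic bush on $T_\alpha$ and run the orbit to the critical value; without that, the coincidence $\alpha'_i=\alpha'_j$ you postulate is unjustified. Note also that the paper does \emph{not} route the proof through $\ExpN_A(f^{p_n},\alpha)\ge 2$; that inequality tells you there are at least two pieces, but not that two of them coincide in $A$, which is the whole point here.
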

\begin{proof}
Since $T_\alpha$ is not in any satellite flower $\bush^{[n-1]}_c$, there is a strictly preperiodic component $\bush^{[n],(p_n)}_d$ of $\bbush^{[n],(p_n)}$ intersecting $T_\alpha$. There are two components $T_1,T_2\subset T_\alpha\setminus \bbush^{[n],(2p_n)}$ adjacent to $\bush^{[n],(2p_n)}_d$. Since there is an $m\le 2p_n$ such that
$f^m\big(\bush^{[n],(2p_n)}_d\big)$ contains the critical value, $T_1,T_2$ have a common injective image of generation $m\le 2p_n$. There must be two arcs $\alpha_1,\alpha_2\in  (f^{2p_n})^* (A)$ aligned with $T_1,T_2$. 
\end{proof}

Assume that the $n$th renormalization is satellite and consider a satellite flower $\bush^{[n-1]}_c$. Let $A$ be an invariant AD as above. Let $A_c=A(\bush^{[n-1]}_c)\subset A$ be the AD consisting of all arcs from $A$ that are in $\bush^{[n-1]}_c$. Similarly, $A^{(p_n)}_c\subset (f^{p_n})^*(A)$ be the AD consisting of all arcs from $ (f^{p_n})^*(A)$ that are in $\bush^{[n-1]}_c$. Then 
\begin{equation}
\label{eq:invarirance:A_c}
(f^{p_n})^*\colon A_c^{p_n}\overset{1:1}{\longrightarrow}  A_c
\end{equation}
is a bijection and every arc $\alpha\in A_c$ is homotopic to its unique lift $\tilde \alpha\in A_c^{p_n}, (f^{p_n})^*(\tilde \alpha)=\alpha$ in the following sense: $T_{\alpha}\setminus \bbush^{[n],(p_n)}=T_{\tilde \alpha}$.
\subsubsection{Inhomogeneous configuration} \label{sss:mixed conf} The discussion here will be only used in~Section~\ref{s:conclusions}. Consider a ql map $f\colon X\to Y$ with a cycle of bushes $\bbush^{[1]}$. Write $\bUpsilon\coloneqq \bbush^{[1]}\cup \filled^{[1]}_0$. Note that $f$ does not permute components of $\bUpsilon$.

An AD $A=\{\alpha_i\}$ on $X\setminus \bUpsilon$ is \emph{horizontal} if every $\alpha_i$ connects components of $\bUpsilon$. A horizontal AD $A=\{\alpha_i=[\gamma_i]\}$ is \emph{$f^{p_1}$-invariant} if every $\alpha_i$ can be represented up to a proper homotopy in $X\setminus \bUpsilon$ in an arbitrary small neighborhood of $f^{-p_1}\left(\bUpsilon\cup \Gamma \right)$, where $\Gamma=\bigcup_i \gamma_i$.  By replacing $f$ with its superattracting model (\S\ref{sss:psib:super attr model}, compare also with Lemma~\ref{lem:Alignment}), we obtain:

\begin{lem}[Alignment with $\Hub_f$]\label{lem:Alignment:Ups} If $A=\{\alpha_i\}$ is an $f^{p_1}$-invariant AD on $Y\setminus \bUpsilon$ as above, then $A$ is \emph{aligned with the Hubbard continuum} $\Hub_f$: every $\alpha_i$ can be represented in an arbitrary small neighborhood of a geodesic continuum $T_i$ of $\Hub_f\setminus \bUpsilon$ connecting components of $\bUpsilon.$\qed
\end{lem}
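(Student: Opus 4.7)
The plan is to replay the proof of Lemma~\ref{lem:Alignment} almost verbatim, with $f$ systematically replaced by $f^{p_1}$, and with an initial reduction to the superattracting model so that $\Hub_f$ and the decoration $\bUpsilon$ become essentially combinatorial objects. Concretely, I would first invoke the combinatorial equivalence of~\S\ref{sss:psib:super attr model}: it identifies components of $\bbush^{[1]}_f$, of $\filled^{[1]}_{f,0}$, and of all their iterated preimages with the corresponding components for a superattracting model $f_\circ$, preserving adjacency and the way $\Hub$ traverses them. Alignment of an AD with a geodesic continuum of $\Hub\setminus \bUpsilon$ is a combinatorial invariant, so it is enough to establish the lemma for $f_\circ$.

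Second, I would set up legal pullbacks. Write $\alpha_i=[\gamma_i]$, $\Gamma=\bigcup_i \gamma_i$, and $Y'=Y\setminus(\bUpsilon\cup \Gamma)$. Some component of $\bUpsilon$ is accessible from $\partial Y$, so I may choose a proper path $\ell\subset Y'$ from $\partial Y$ to it. Crucially, $\bUpsilon$ is $f^{p_1}$-invariant (each component is fixed as a set by $f^{p_1}$), so one can define a \emph{legal $f^{p_1}$-pullback} of $\ell$ exactly as in Lemma~\ref{lem:Alignment}: any $f^{p_1}$-lift of $\ell$ running from $\bUpsilon$ to $\partial f^{-p_1}(Y)$, concatenated with a path in $Y\setminus f^{-p_1}(Y)$ to produce a proper arc in $Y\setminus \bUpsilon$ from $\partial Y$ to $\bUpsilon$. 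The hypothesis of $f^{p_1}$-invariance of $A$ says exactly that such a pullback can again be properly homotoped into $Y'$.

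Finally, iterating, the legal $f^{p_1}$-pullbacks of $\ell$ realize, up to proper homotopy in $Y\setminus \bUpsilon$, every $f^{p_1}$-periodic external ray landing on $\bUpsilon$; since combinatorially $\Hub_f$ is the union of geodesic arcs between such landing points, each $\alpha_i$ is properly homotopic into an arbitrary small neighborhood of a geodesic continuum $T_i\subset \Hub_f\setminus \bUpsilon$ connecting components of $\bUpsilon$, as desired. The main obstacle is precisely this last step: since $\filled^{[1]}_0$ is not permuted in sync with the rest of $\bbush^{[1]}$, only $f^{p_1}$-pullbacks (not $f$-pullbacks) are available, and one must check that these still generate \emph{all} relevant rays. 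The point is that $f$-preperiodic rays landing on $\partial \filled^{[1]}_0$ become $f^{p_1}$-periodic, so no ray is lost. On the superattracting model $f_\circ$ this is transparent: the dynamics is postcritically finite, and the $f^{p_1}$-periodic landing rays at $\bUpsilon_{f_\circ}$ can be enumerated explicitly via their kneading sequences.
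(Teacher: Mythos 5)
Your proposal matches the paper's intended proof, which is given only as the one-line pointer preceding the lemma: pass to the superattracting model of \S\ref{sss:psib:super attr model} and rerun the legal-pullback argument of Lemma~\ref{lem:Alignment} with $f^{p_1}$ in place of $f$; your write-up fills in precisely those steps. One small imprecision at the end: a ray with positive preperiod is never periodic under any iterate, so it is not that ``$f$-preperiodic rays landing on $\partial\filled^{[1]}_0$ become $f^{p_1}$-periodic.'' The correct observation is that every $f$-\emph{periodic} ray landing on $\bUpsilon$ already has $f$-period a multiple of $p_1$ (each component of $\bUpsilon$ has $f$-period $p_1$, as $\bUpsilon$ is only fixed setwise by $f^{p_1}$), hence is $f^{p_1}$-periodic; consequently the iterated $f^{p_1}$-pullbacks of $\ell$ realize all the rays needed, and nothing is lost relative to the $f$-pullback argument of Lemma~\ref{lem:Alignment}.
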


For $m\ge 1$ and $\alpha\in A$, the \emph{expansivity number} $\ExpN_A(f^{mp_1}, \alpha)$ is defined in the same way as in \S~\ref{sss:periodic arcs}; i.e., it is the smallest number of arcs in $f^{-p_1m}\left( A\right)$ overflown by $\alpha$.  An arc $\alpha\in A$ is \emph{genuine periodic} if $\ExpN_A(f^{p_1m},\alpha)=1$ for all $m\ge 1$. By the same argument as in Lemma~\ref{lem:per arcs}, we have:

\begin{lem}[Expansivity of {$  f\mid [\Hub_f\setminus \bUpsilon]$}] 
\label{lem:per arcs:ups}
Consider an $f^{p_1}$-invariant AD $A$ on $Y\setminus \bUpsilon$ and an arc $\gamma\in A$ aligned with a proper geodesic continuum $T_\gamma$ of $\Hub_f\setminus \bUpsilon$. If the first renormalization of $f$ is primitive, then
\begin{equation}
\label{eq:lem:per arcs:ups} \ExpN_A(f^{p_1}, \gamma) \ge 2.
\end{equation}\qed
\end{lem}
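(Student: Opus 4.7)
The plan is to imitate the proof of Lemma~\ref{lem:per arcs} almost verbatim, with $\bbush^{[n]}$ replaced by $\bUpsilon = \bbush^{[1]}\cup \filled^{[1]}_0$. In that lemma, the only configuration in which the expansivity bound~\eqref{eq:lem:per arcs} could fail was when $T_\gamma$ sat inside a level-$(n-1)$ satellite flower. In our setup, the analogous obstruction would be a level-$0$ satellite flower, i.e.\ the level-$1$ filled Julia sets $\filled^{[1]}_i$ organized around the $\alpha$-fixed point of $f$. Primitivity of the first renormalization rules this out: the $\filled^{[1]}_i$ are pairwise disjoint, hence there is no level-$0$ flower at all, and the only surviving case of the Lemma~\ref{lem:per arcs} dichotomy is the one giving $\ExpN_A(f^{p_1},\gamma)\ge 2$.

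Concretely, I would first pass to the superattracting model $f_\circ$ of the level-$2$ little copy of $\Mandel$ containing $f$ via \S\ref{sss:psib:super attr model}. The triple $(\Hub_f,\bUpsilon,A)$, the isotopy class of $\gamma$, and the action of $f^{p_1}$ all transfer faithfully to the analogous data for $f_\circ$, so it suffices to establish the bound inside the model, where $\Hub_{f_\circ}$ is an honest finite tree and the dynamics admit an explicit combinatorial description. Lemma~\ref{lem:Alignment:Ups} then provides a proper geodesic continuum $T_\gamma\subset \Hub_{f_\circ}\setminus \bUpsilon$ with which $\gamma$ is aligned.

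Because primitivity forces $T_\gamma$ to leave every level-$1$ piece as it travels between two components of $\bUpsilon$, the combinatorial expansion of $f^{p_1}$ on the part of $\Hub_{f_\circ}$ outside $\bfilled^{[1]}$ produces a preperiodic preimage of some component of $\bUpsilon$ under a suitable iterate $f^{kp_1}$ which strictly cross-intersects $T_\gamma$; equivalently, $f^{-p_1}(\bUpsilon)\supsetneq \bUpsilon$ in a way that strictly separates the two endpoints of $T_\gamma$. This is the precise analogue of the ``non-flower'' case in the proof of Lemma~\ref{lem:per arcs}. Consequently, any decomposition~\eqref{eq:form for gamma} of a representative of $\gamma$ rel $f^{-p_1}(\bUpsilon)$ must contain at least two arcs from $(f^{p_1})^*(A)$, yielding $\ExpN_A(f^{p_1},\gamma)\ge 2$.

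The only step that requires genuine care is the cross-intersection/expansion claim in the superattracting model, since it is here that primitivity of the first renormalization is actually used; once that is in place, the rest of the argument is a word-for-word transcription of the proof of Lemma~\ref{lem:per arcs}, which is why the paper is justified in writing ``by the same argument''.
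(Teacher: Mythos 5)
Your proposal is correct and is essentially an expanded account of what the paper does: the paper's entire proof is the phrase ``By the same argument as in Lemma~\ref{lem:per arcs}'', and that argument is precisely the ``non-flower'' branch of Lemma~\ref{lem:per arcs} (verified on the superattracting model of \S\ref{sss:psib:super attr model}, with Lemma~\ref{lem:Alignment:Ups} providing the alignment with $\Hub_f$), which becomes the only branch once primitivity of the first renormalization excludes the level-$0$ satellite flower. Your phrase ``strictly cross-intersects $T_\gamma$'' is slightly off as geometry---what actually forces $\ExpN_A(f^{p_1},\gamma)\ge 2$ is a strictly preperiodic component of $f^{-p_1}(\bUpsilon)$ sitting in the interior of $T_\gamma$ and separating its endpoints, not a transverse crossing by a lift of $T_\gamma$---but your follow-up clause (``$f^{-p_1}(\bUpsilon)\supsetneq\bUpsilon$ in a way that strictly separates the two endpoints of $T_\gamma$'') states the correct mechanism, so this is a matter of wording rather than a gap.
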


\subsection{Teichm\"uller contraction}
\label{ss:Teichm contra} The Teichm\"uller contraction comes from the observation that the restriction of a qc conjugacy to deeper renormalization levels can only decrease the dilatation. Therefore, the renormalization orbits (for bounded-type Feigenbaum families) can escape to infinity with at most linear rate. This fact can be traced back to Sullivan~\cite{S:Berkeley}, where structures of the Teichm\"uller spaces (reminiscent to the Thurston's machinery) were brought into the Renormalization Theory.  The proposition below is stated for quadratic maps; the Straightening Theorem easily extends it quadratic-like maps.

%We will deduce the Teichm\"uller contraction from the Straightening Theorem combined with the compactness of the Feigenbaum family of type $\overline p$. The idea can be traced back to Sullivan~\cite{S:Berkeley}, where structures of the Teichm\"uller spaces (reminiscent to the Thurston's machinery) were brought into the Renormalization Theory.  The proposition below is stated for quadratic maps; the Straightening Theorem easily extends it quadratic-like maps.

\begin{prop}
\label{prop:Teichm contr}
For every combinatorial bound $\bar p$, there is a constant $\Delta=\Delta_{\overline p}>1$ such that the following holds. Let $f_c(z)= z^2+c$ be an infinitely renormalizable quadratic polynomial of bounded type $\overline p$, see~\S\ref{sss:infin renorm}. Then $g_n=\RR^n f_c$ has ql prerenormalization $g_n=f_c^{p_n}\colon X_n\to Y_n$, $X_n\Subset Y_n$ such that
\begin{enumerate}[label=\text{(\Roman*)},font=\normalfont,leftmargin=*]
\item $\Width(Y_n\setminus X_n)=O( \Delta^n)$; and\label{prop:Teichm contr:Cond 1}
\item  $Y_n\setminus \filled_{g_n}$ is disjoint from $\bbush^{[n]}_{f_c}$.\label{prop:Teichm contr:Cond 2}
\end{enumerate}
\end{prop}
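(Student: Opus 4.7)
The plan is to combine the finiteness of combinatorial types (forced by the bound $\bar p$ on relative periods) with Sullivan's Teichm\"uller contraction for quadratic-like renormalization~\cite{S:Berkeley}.

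First, I would set up a finite family of reference germs. Since $q_k := p_k/p_{k-1} \le \bar p$, at every level the one-step straightening of $g_n$ lands in one of finitely many Mandelbrot copies $\MM' \subset \MM$ of period $\le \bar p$; let $\mathcal{C}_{\bar p}$ denote this finite set. For each $\MM' \in \mathcal{C}_{\bar p}$, fix the superattracting center $c_{\MM'}$ and choose once and for all a canonical ql restriction $h_{\MM'} = f_{c_{\MM'}}^{p(\MM')} \colon X_{\MM'} \to Y_{\MM'}$ of the first renormalization of $f_{c_{\MM'}}$, with $\mod(Y_{\MM'} \setminus X_{\MM'}) \ge m_\ast > 0$ uniform over $\mathcal{C}_{\bar p}$. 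These reference domains $Y_{\MM'}$ can be chosen so that $Y_{\MM'} \setminus \filled_{h_{\MM'}}$ avoids the other level-one bushes of $f_{c_{\MM'}}$, which is easy in the superattracting dynamical plane where all little bushes are attracting cycles with definite separation.

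Next, I would apply Sullivan's contraction iteratively. By the Straightening Theorem, $g_n$ is hybrid equivalent to a quadratic polynomial $\tilde{f}_n$ via a qc conjugacy of dilatation $K_n$. Sullivan's observation is that restricting such a conjugacy to deeper renormalization levels can only decrease its dilatation. Comparing $g_n$ to the reference $h_{\MM^{[n+1]}}$ (both lying, after one further renormalization and straightening, in the same hybrid class) and absorbing the ``change-of-base-point'' cost between consecutive reference germs in the finite set $\mathcal{C}_{\bar p}$ yields the recursive estimate $\log K_{n+1} \le \log K_n + D$ with $D = D(\bar p)$ uniform, hence $\log K_n = O(n)$.

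Finally, I would convert the qc bound into the modulus bound. A $K_n$-qc conjugacy distorts moduli by at most a factor of $K_n$. Transporting the reference domains $Y_{\MM^{[n+1]}} \supset X_{\MM^{[n+1]}}$ via this conjugacy into the dynamical plane of $f_c$ produces $Y_n \supset X_n \supset \filled_{g_n}$ with
\[
\mod(Y_n \setminus X_n) \;\ge\; m_\ast/K_n \;\gtrsim\; \Delta^{-n}, \qquad \Delta := e^{D},
\]
which gives~\ref{prop:Teichm contr:Cond 1}. Condition~\ref{prop:Teichm contr:Cond 2} is inherited from the reference side: the disjointness of $Y_{\MM^{[n+1]}} \setminus \filled_{h_{\MM^{[n+1]}}}$ from the other level-one bushes of $f_{c_{\MM^{[n+1]}}}$ transports, under the qc conjugacy, to the disjointness of $Y_n \setminus \filled_{g_n}$ from $\bbush^{[n]}_{f_c}$. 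The main obstacle will be to make the change-of-base-point step genuinely uniform over transitions in $\mathcal{C}_{\bar p}$; this is where the finiteness of $\mathcal{C}_{\bar p}$ plays the role of compactness, and it is also the point where the exponent $\Delta$ picks up its dependence on $\bar p$.
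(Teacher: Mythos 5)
Your overall strategy --- iterate a one-step hybrid conjugacy whose dilatation is controlled uniformly over the finitely many maximal Mandelbrot copies of period $\le \bar p$, and compose --- matches the paper's, and you correctly identify the change-of-base-point step as the crux. But ``finiteness of $\mathcal{C}_{\bar p}$ plays the role of compactness'' passes over a real difficulty: the one-step dilatation is \emph{not} bounded on a single satellite copy $\MM_i$ --- it blows up as $c$ approaches the cusp, where the prerenormalization domain pinches. The paper's fix is to excise a small neighborhood of each satellite cusp, obtaining compact sets $\MM_i^\circ$ on which the one-step dilatation is bounded by some $\Delta$, and to observe that the bounded-type hypothesis automatically keeps every $c_k=R^k(c)$ in $\bigcup_i\MM_i^\circ$: straightening maps the cusp of $\MM_i$ to the root of the main cardioid, and all the period-$\le\bar p$ copies stay away from there, so $R^{-1}\big(\bigcup_j\MM_j\big)\subset\MM_i^\circ$. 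Without this excision the compactness argument does not close.

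The argument you sketch for~\ref{prop:Teichm contr:Cond 2} also has a gap. Transporting a level-one disjointness statement through the straightening of $g_n$ only controls what is inside the domain $Y_n$ of that conjugacy, while most of the $p_n$ bushes in $\bbush^{[n]}_{f_c}$ lie far outside $\filled_{g_n}$; for those, ``transport'' says nothing, and the very region one wants to transport into is what needs to be controlled. What makes the paper's proof close is an inductive nesting you do not use: the one-step domains are chosen with $Y_c\Supset Y_{1,c}$, which after composition gives $Y_n\subset Y_{n-1}\subset\dots\subset Y_1$; combined with $\bbush^{[n]}_{f_c}\subset\bbush^{[n-1]}_{f_c}$ (each level-$n$ periodic bush lies inside a level-$(n-1)$ periodic bush), disjointness outside $\filled_0^{[n-1]}$ follows from the level-$(n-1)$ statement, and only the part inside $\filled_0^{[n-1]}$ needs the one-step disjointness pushed through the conjugacy. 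Finally, routing the comparison through the superattracting centers $c_{\MM'}$ introduces a mismatch: your $h_{\MM'}$ straightens to $z^2$, while $g_n$ straightens to $f_{c_n}$, generically not $z^2$, so the two are not hybrid equivalent as stated. The paper sidesteps this by comparing the one-step prerenormalization of $f_c$ directly to the restricted polynomial $f_{R(c)}$, with $c$ varying over $\MM_i^\circ$.
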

\begin{proof} 
Let $\MM_i, i \in I$ be the (finite) set of all maximal $\MM$-copies in $\MM$ of period $\le \bar p$, and let 
\[R\colon \bigcup _{i\in I}\MM_i\to \MM\]
be the ql straightening map. Let us remove from every satellite $\MM_i$ a  small open neighborhood of its cusp such that the remaining $\MM^\circ_i$ contains all the preimages of the $\MM_j$ under $R\colon \MM_i\to \MM$. For a primitive copy $\MM_i$, set $\MM_i^\circ\coloneqq \MM_i$. By compactness, there is a $\Delta>1$ such that every $f_c$ with $c\in \bigcup _i \MM_i^\circ$ has a ql restriction $g_1=f_c^{p_1}\colon X_{1,c}\to Y_{1,c}$ around its maximal little Julia set containing $0$ such that $g_1$ satisfies \ref{prop:Teichm contr:Cond 2} and such that $g_1$ is hybrid conjugate via $h_{c}\colon Y_{1,1}\to Y_{R(c)}$ with dilatation $\le \Delta$ to a ql restriction $f_{R(c)}\colon X_{R(c)}\to Y_{R(c)}$. Moreover, we can select the $Y_{1,c}$ and $Y_c$ so that $Y_c\Supset Y_{1,c}$.

Wright $c_n \coloneqq R^n(c)$. Then \[\bar h\coloneqq \big(h_{c_{n-1}}\circ \dots \circ h_{c_1}\circ h_{c}\big)^{-1}\] is a hybrid conjugacy from $f_{c_n}\colon X_{c_n}\to Y_{c_n}$ to a ql restriction $g_n=f^{p_1\dots p_n}_c\colon X_{n,c}\to Y_{n,c}$ satisfying \ref{prop:Teichm contr:Cond 2}. Since the dilatation of $\overline h$ is $\le \Delta^n$, Property \ref{prop:Teichm contr:Cond 1} follows.
\end{proof}

\begin{rem}
\label{rem:prop:Teichm contr}
Property~\ref{prop:Teichm contr:Cond 2} implies that the prerenormalization $g_n\colon X_n\to Y_n$ is unbranched: $\Post(f_c)\cap Y_n\subset \filled(g_n)$, compare with~\cite{McM2}. Moreover, by induction, $Y_n$ is disjoint from 
\[\Upsilon\coloneqq \bigcup_{m\le n} \left(\bbush^{[m]}_f\setminus \bush^{[m]}_{f,0} \right).\]
This implies that the quadratic-like prerenormalizations $g_n\colon X_n\to Y_n$ can be univalently lifted to the dynamical plane of any $\psi^\bullet$ renormalization \[\RR^{n_1\bullet}\circ\RR^{n_2\bullet}\dots\dots \circ \RR^{n_s\bullet} (f_c)\]of 
$f_c$, see~\S\ref{sss:psi b renorm}.
\end{rem}

\section{$\psi^\bullet$-ql renormalization and near-degenerate regime}
 In this section, we will introduce $\psi^\bullet$-ql renormalization and discuss tools to detect its degeneration; see~\cite{A,K, covering lemma, L:book, DL2} for details.

 Given a compact subset $K\Subset S$, we denote by $\Fam(S,K)$ the family of non-trivial proper curves in $S\setminus K$ connecting $\partial K$ to $\partial S$. We write
 \begin{equation}
\label{eq:width:dfn}
 \Width(S,K) =\Width \big(\Fam(S,K)\big). 
 \end{equation}

\subsection{Outline} Consider a ql map $f\colon X\to Y$ and let $\bbush^{[n]}$ be its level $n$ cycle of little bushes.

Around every periodic $\filled^{[n]}_i$ there is an associated ql prerenormalization 
\begin{equation}
\label{eq:f_n,i}
f_{n,i}=f^{p_n}\colon X_{n,i}\to Y_{n,i}.
\end{equation}
 In~\S\ref{psi-renorm}, we will define the $\psi^\bullet$-ql renormalization 
\begin{equation}
\label{eq:F_n,i}
F_{n,i} = (f_{n,i},\ \iota_{n,i})\colon U_{n,i} \rightrightarrows V_{n,i}
\end{equation}
of $f\colon X\to Y$ by \emph{extending \eqref{eq:f_n,i} along all curves} in $Y\setminus \bbush^{[n]}$, see~\S\ref{sss:psi b renorm}. This is similar to the $\psi$-ql renormalization~\cite{K}, where the extension is performed along all curves in $Y\setminus  \bfilled^{[n]}$ under the assumption that the $\filled^{[n]}_i$ are pairwise disjoint  (the primitive case); see~\S\ref{sss:psi renorm: motiv} and Remark~\ref{rem:general constr}.

The correspondence $F_{n,i}$ in~\eqref{eq:F_n,i} is called a $\psi^\bullet$-ql map. It consists of a degree $2$ branched covering $f_{n,i}$ and an immersion $\iota_{n,i}$, see definitions in~\S\ref{sss:qlb:Defn}. We define the Julia set and bush of $F_{n,i}$ to be that of the ql map $f_{n,i}$~\eqref{eq:f_n,i}:
\[ \filled_{F_{n,i}}\coloneqq \filled_{f_{n,i}}\sp\sp\text{ and }\sp\sp \bush_{F_{n,i}}\coloneqq \bush_{f_{n,i}} .\]
Similarly, $\filled_F$ and  $\bush_F$ are defined for any $\psi^\bullet$-ql map, see~\eqref{eq:dfn:non-esc b}.

A $\psi^\bullet$-ql renormalization can be naturally defined for a $\psi^\bullet$-ql map; i.e., $\psi^\bullet$-ql renormalization can be iterated. The Sup-Chain Rule for renormalization domains is stated in~\S\ref{sss:sup chain rule}.

Just like a ql map $f\colon X\to Y$ can be restricted to $f\colon X^{k+1}\to X^k$, where $X^{k+1}=f^{-k}(X)$, a $\psi^\bullet$-ql map $F=(f,\iota )\colon U\rightrightarrows V$
can be restricted to \[F=(f,\iota )\colon U^{k+1}\rightrightarrows U^{k}\] by considering the fiber product~\S\ref{sss:FibProd}. We have a natural $2^{k}:1$ branched covering $f^k\colon U^k\to V$ and an immersion $\iota^k\colon U^k\to V$.

 The \emph{width} of a $\psi^\bullet$-ql map $F\colon U\rightrightarrows V$ is 
\[\Width_\bullet(F)\coloneqq \Width(V\setminus \bush_F).\] 
We denote by $\AAA^{[n]}\equiv \AAA^{[n]}_F$ the weighted arc diagram (WAD) of $V\setminus \bbush^{[n]}$: it is a formal sum of weighted arcs representing rectangles in the thick-thin decomposition of $V\setminus \bbush^{[n]}$; see \S\ref{ss:WAD} and Figure~\ref{ss:WAD}. The \emph{horizontal part}  $\AAA^{[n]}_\hor$ of $\AAA^{[n]}$ consists of weighted arcs connecting components of $\bbush^{[n]}$.  The \emph{vertical part} $\AAA^{[n]}_\ver$ of $\AAA^{[n]}$ consists of weighted arcs connecting $\partial V$ and components of $\bbush^{[n]}$. The \emph{local WAD} $\AAA^{[n]}_i$ consists of arcs in $\AA^{[n]}$ adjacent to $\bush^{[n]}_i$, where the weight of self-arcs to $\bush^{[n]}_i$ is doubled. More generally, $\AA^{[n],(m),k}$ is the WAD of $U^k\setminus \bbush^{[n],(m)}$, and 
the WADs $\AA^{[n],(m),k}_\hor, \AA^{[n],(m),k}_i,\dots$ are defined accordingly, see~\S\ref{sss:WADs}.  We have 
\[\Width(F_{n,i})=\Width\left(\AA^{[n]}_i\right)+O_{p_n}(1).\]

By \emph{identifying up to homotopy} $U^k\setminus \bbush^{[n],(m)}$ and $U^{k+1}\setminus \bbush^{[n],(m)}$, we obtain $\AA^{[n],(m),k+1}_\hor\le\AA^{[n],(m),k}_\hor$, see~\S\ref{sss:monot:AA^k}. Since the complexity of $A^{[n],k}_\hor\equiv\AD\left[\AA^{[n],k}_{\hor}\right]$ decreases, the $A^{[n],k}_{\hor}$ are essentially invariant for $k\ge 3 p_n$. Then either most of $\AA^{[n]}_\hor$ is in $\AA^{[n],k}_\hor$ or a substantial part of $\AA^{[n]}_\hor$ is in $\AA^{[n],k}_\ver$ -- this is a key dichotomy in Section~\ref{s:Pull off for non-periodic rect}; see also the dichotomy~``\ref{C1:intro} vs~\ref{C2:intro}'' in~\S\ref{ss:intro:SatCase}.

 \subsection{Rectangles} \label{ss:rectangles} A \emph{Euclidean rectangle} is a rectangle $E_x\coloneqq[0,x]\times [0,1] \subset \C$, where:
\begin{itemize}
\item $(0,0), (x,0), (x,1), (0,1)$ are four vertices of $E_x$,
\item $\partial^h E_x=[0,x]\times\{0,1\}$ is the horizontal boundary of $E_x$,
\item $\partial^{h,0} E_x\coloneqq [0,x]\times\{0\}$ is the \emph{base} of $E_x$,
\item $\partial^{h,1} E_x\coloneqq [0,x]\times\{1\}$ is the \emph{roof} of $E_x$,
\item $\partial^v E_x=\{0,x\}\times [0,1]$ is the \emph{vertical} boundary of $E_x$,
%\item $\partial^{v,\ell} E_x\coloneqq \{0\}\times [0,1], \sp  \partial^{v,\rho} E_x\coloneqq \{x\}\times [0,1]$ is the \emph{left} and \emph{right vertical} boundaries of $E_x$; 
\item $\Fam(E_x)\coloneqq \{\{t\}\times[0,1]\mid t\in [0,x]\}$ is the \emph{vertical foliation} of $E_x$,
\item $\Fam^\full(E_x)\coloneqq \{\gamma\colon [0,1]\to E_x \mid \gamma(0)\in \partial ^{h,0}E_x,\ \gamma(1)\in \partial ^{h,1}E_x \}$ is the \emph{full family of curves} in $E_x$; 
\item $\Width(E_x)=\Width(\Fam(E_x))=\Width\big(\Fam^\full(E_x)\big)=x$ is the \emph{width} of $E_x$,
\item $\mod (E_x)=1/\Width(E_x)=1/x$ the extremal length of $E_x$.
\end{itemize}
%The width of the vertical foliation $\Fam(E_x)$ is equal to the width of all the curves in $E_x$ connecting $\partial^{h,0} E_x$ and $\partial^{h,1} E_x$.

By a \emph{(topological) rectangle} in a Riemann surface we mean a closed Jordan disk $\RR$ together with a conformal map $h\colon \RR\to E_x$ into the standard rectangle $E_x$.  The vertical foliation $\Fam(\RR)$, the full family $\Fam^\full(\RR)$, the base $\partial^{h,0}\RR$, the roof $\partial^{h,1}\RR$, the vertices of $\RR$, and other objects are defined by pulling back the corresponding objects of $E_x$. Equivalently, a rectangle $\RR$ is a closed Jordan disk together with four marked vertices on $\partial \RR$ and a chosen base between two vertices.

A \emph{genuine subrectangle} of $E_x$ is any rectangle of the form $E'=[x_1,x_2]\times [0,1]$, where $0\le x_1<x_2\le x$; it is identified with the standard Euclidean rectangle $[0,x_2-x_1]\times [0,1]$ via $z\mapsto z- x_1$. A genuine subrectangle of a topological rectangle is defined accordingly. 

A \emph{subrectangle} of a rectangle $\RR$ is a topological rectangle $\RR_2\subset \RR$ such that $\partial^{h,0}\RR_2\subset \RR$ and $\partial^{h,1}\RR_2\subset \RR$. By monotonicity: $\Width(\RR_2)\le \Width(\RR)$.

Assume that $\Width(E_x)>2$. The \emph{left and right $1$-buffers} of $E_x$ are defined \[ B^\ell_1\coloneqq [0,1]\times [0,1]\sp\text{ and }\sp B^\rho_1\coloneqq [x-1,x]\times [0,1]\] respectively. We say that the rectangle 
\[E^\new_x\coloneqq [1,x-1]\times [0,1]=E_x\setminus \big( B^\ell_1 \cup B^\rho_1\big)\] is obtained from $E_x$ by \emph{removing $1$-buffers}. If $\Width(E_x)\le 2$, then we set $E_x^\new\coloneqq\emptyset$. Similarly, buffers of any width are defined.

\subsubsection{Monotonicity and Gr\"otzsch inequality} \label{sss:Monot+Gr}We say a family of curves $\SS$ \emph{overflows} a family $\FamG$ if every curve $\gamma\in\SS$ contains a subcurve $\gamma'\in \FamG$. We also say that 
\begin{itemize}
\item a family of curves $\Fam$ \emph{overflows} a rectangle $\RR$ if $\Fam$ overflows $\Fam^\full(\RR)$;
\item a rectangle $\RR_1$ overflows another rectangle $\RR_2$ if $\Fam(\RR_1)$ overflows $\Fam^\full(\RR_2)$.
\end{itemize}

If $\Fam$ overflows a family or a rectangle $\FamG$, then $\FamG$ is wider than $\Fam$:
\begin{equation}
\label{eq:Width Monot}
\Width(\Fam) \le \Width(\FamG).
\end{equation}
If $\Fam$ overflows both $\FamG_1,\FamG_2$, and $\FamG_1,\FamG_2$ are disjointly supported, then the \emph{Gr\"otzsch} inequality states:
\begin{equation}
\label{eq:Grot}
\Width(\Fam) \le \Width(\FamG_1)\oplus \Width(\FamG_2),
\end{equation}
where $x\oplus y =(x^{-1}+y^{-1})^{-1}$ is the harmonic sum.

\subsection{Weighted arc diagrams} \label{ss:WAD} Let us recall the notion of the Weighted Arc Diagram (WAD) describing wide rectangles in the thick-thin decomposition of a Riemann surface with boundary, see Figure~\ref{Fg:TTD} for illustration and brief summary. We also recall that WAD were abstractly defined in~\S\ref{sss:AD}.

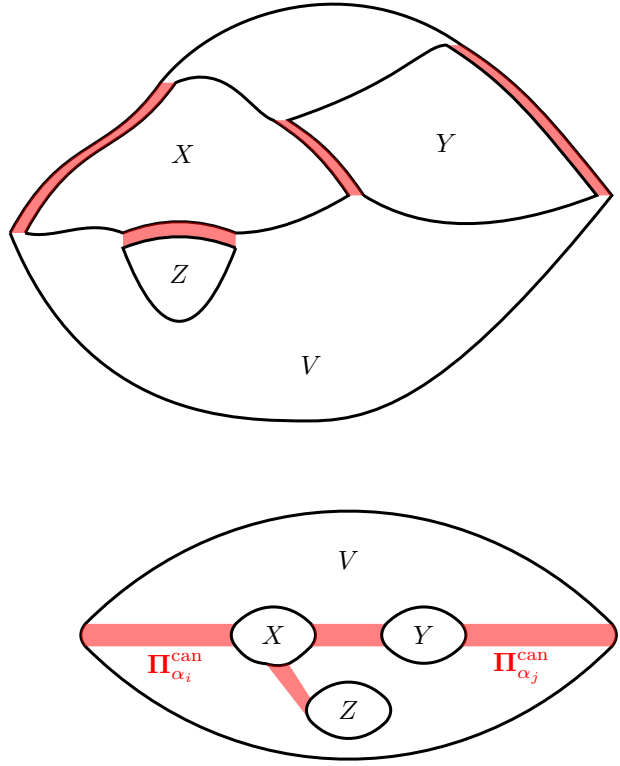
\begin{figure}[t!]
\[\begin{tikzpicture}

\draw[,line width=0.4mm] (0,0)
 .. controls  (0.7, 1.3) and (1.3,1) .. 
 (2,2)
  .. controls  (2.8, 3) and (4.6,3.5) .. 
(6,2.5)
.. controls  (6.8, 2) and (7.2,1.5) .. 
(8,0.5)
.. controls  (6, -2) and (5,-2.5) .. 
(4,-2.5)
.. controls  (3, -2.5) and (1,-2.5) .. 
(0,0);

\node[above] at (4,-2) {$V$};

 \draw[opacity=0, fill=red, fill opacity=0.5]
(6,2.5)
.. controls  (6.8, 2) and (7.2,1.5) .. 
(8,0.5)
.. controls  (8,0.5) and (7.8,0.5).. 
(7.8,0.5)
.. controls   (7,1.5)  and (6.6, 2).. 
(5.8,2.5);
 
 \draw[,line width=0.4mm] (0.2,0)
 .. controls  (0.9, 1.3) and (1.5,1) .. 
 (2.2,2)
 .. controls  (2.9, 2.3) and (3.2,1.6) .. 
 (3.5,1.5)
 .. controls  (3.8, 1.3) and (4.1,1.1) .. 
 (4.5,0.5)
.. controls  (4, 0.2) and (3.5,0) .. 
 (3,0)
 .. controls  (2.5, 0.2) and (2,0.2) .. 
(1.5,0)
 .. controls  (1, 0.2) and (0.6,-0.1) .. 
(0.2,0);

\node[below] at (2.3, 1.3) {$X$};

 \draw[opacity=0, fill=red, fill opacity=0.5]
(3,0)
 .. controls  (2.5, 0.2) and (2,0.2) .. 
(1.5,0)
.. controls  (1.5, 0) and (1.5,-0.2) .. 
(1.5,-0.2)
 .. controls  (2, 0.) and (2.5,0) .. 
(3,-0.2);

\draw[,line width=0.4mm] (3,-0.2)
 .. controls  (2.5, 0.) and (2,0) .. 
(1.5,-0.2)
 .. controls  (2, -1.5) and (2.5,-1.5) .. 
(3,-0.2);

\node[below] at (2.25, -0.3) {$Z$};

\draw[opacity=0, fill=red, fill opacity=0.5]
(0,0)
 .. controls  (0.7, 1.3) and (1.3,1) .. 
 (2,2)
  .. controls  (2, 2) and (2.2,2) .. 
(2.2,2)
 .. controls (1.5,1) and (0.9, 1.3)  .. 
(0.2,0);

 \draw[,line width=0.4mm] (4.7,0.5)
 .. controls  (4.3,1.1) and (4, 1.3)  .. 
 (3.7,1.5)
 .. controls  (5.2, 2) and (5.5,2.5) .. 
 (5.8,2.5)
.. controls  (6.6, 2) and (7,1.5) .. 
(7.8,0.5)
.. controls  (6.5, 0) and (5.5,0) .. 
(4.7,0.5);
\node at (5.8, 1.2){$Y$};

\draw[opacity=0, fill=red, fill opacity=0.5]
 (3.5,1.5)
 .. controls  (3.8, 1.3) and (4.1,1.1) .. 
 (4.5,0.5)
  .. controls  (4.5, 0.5) and (4.7,0.5) .. 
 (4.7,0.5)
  .. controls   (4.3,1.1)  and (4, 1.3).. 
(3.7,1.5) ;

\begin{scope}[shift={(1,-5.5)}]

 \draw[,line width=0.4mm] (0,0)
  .. controls  (-0.08, 0.1) and (-0.08,0.2) .. 
(0,0.3)
  .. controls  (2, 2.3) and (5,2.3) .. 
(7,0.3)
  .. controls  (7.08, 0.2) and (7.08,0.1) .. 
(7,0.)
  .. controls  (5, -2) and (2,-2) .. 
(0,0)
;

\draw[opacity=0, fill=red, fill opacity=0.5]
(0,0)
  .. controls  (-0.08, 0.1) and (-0.08,0.2) .. 
(0,0.3)
  .. controls (0,0.3) and (0,0.3) .. 
(2,0.3)
  .. controls  (2-0.08, 0.2) and (2-0.08,0.1) .. 
(2,0)
;

\draw[,line width=0.4mm] (2,0)
  .. controls  (2-0.08, 0.1) and (2-0.08,0.2) .. 
(2,0.3)
  .. controls  (2.3, 0.6) and (2.7,0.6) .. 
(3,0.3)
  .. controls  (3.08, 0.2) and (3.08,0.1) .. 
(3,0.)
  .. controls  (2.8, -0.2) and (2.7, -0.23)..%(2.3,-0.3) .. 
(2.7,-0.23)
.. controls  (2.6, -0.27) and (2.5, -0.26)..%here
(2.4,-0.23)
.. controls (2.3,-0.22) and  (2.15,-0.2) ..
(2,0);

\node at (2.5,0.15) {$X$};

\draw[opacity=0, fill=red, fill opacity=0.5]
(2.7,-0.23)
.. controls  (2.6, -0.27) and (2.5, -0.26)..%here
(2.4,-0.23)
.. controls  (2.4,-0.23) and (2.4,-0.23)..
(3,-1)
  .. controls  (3-0.08, 0.1-1) and (3-0.08,0.2-1) .. 
(3,0.3-1);

\draw[,line width=0.4mm,shift={(1,-1)}] (2,0)
  .. controls  (2-0.08, 0.1) and (2-0.08,0.2) .. 
(2,0.3)
  .. controls  (2.3, 0.6) and (2.7,0.6) .. 
(3,0.3)
  .. controls  (3.08, 0.2) and (3.08,0.1) .. 
(3,0.)
  .. controls  (2.7, -0.3) and (2.3,-0.3) .. 
(2,0);

\node at (3.5,0.15-1) {$Z$};

\node at (3.5,0.15+1) {$V$};

\draw[opacity=0, fill=red, fill opacity=0.5]
(3,0.3)
  .. controls  (3.08, 0.2) and (3.08,0.1) .. 
(3,0.)
  .. controls (3,0.) and (3,0.) .. 
(4,0)
  .. controls  (4-0.08, 0.1) and (4-0.08,0.2) .. 
(4,0.3);

\draw[,line width=0.4mm,shift={(2,0)}] (2,0)
  .. controls  (2-0.08, 0.1) and (2-0.08,0.2) .. 
(2,0.3)
  .. controls  (2.3, 0.6) and (2.7,0.6) .. 
(3,0.3)
  .. controls  (3.08, 0.2) and (3.08,0.1) .. 
(3,0.)
  .. controls  (2.7, -0.3) and (2.3,-0.3) .. 
(2,0);

\node at (4.5,0.15) {$Y$};

\node[red] at (1.2,-0.28) {$\bPi^\can_{\alpha_i}$};
\node[red] at (5.8,-0.28) {$\bPi^\can_{\alpha_j}$};

\draw[opacity=0, fill=red, fill opacity=0.5]
(5,0.)
  .. controls  (5.08, 0.1) and (5.08,0.2) .. 
(5,0.3)
  .. controls (4,0.3) and (4,0.3) .. 
(7,0.3)
  .. controls  (7.08, 0.2) and (7.08,0.1) .. 
(7,0.);

\end{scope}
\end{tikzpicture}\]

\caption{Top: the thin-thick decomposition of $V\setminus (X\cup Y\cup  Z)$. As we are interested in homotopy properties of wide rectangles, they will usually be depicted
as narrow rectangles (bottom) and denoted by $\bPi^\can_{\alpha_i}$, where $\alpha_i$ is an arc representing the homotopy class of the rectangle $\bPi^\can_{\alpha_i}$. The Weighted Arc Diagram (WAD) is the formal sum $\AA=\sum_i \Weight(\bPi^\can_{\alpha_i})\alpha_i$, while the Arc Diagram (AD) is the associated set of curves $A=\{\alpha_i\}\equiv\AD[\AA]$. Thin annuli of the thick-thin decomposition are disregarded in the WAD.}
\label{Fg:TTD}
\end{figure}

\subsubsection{Proper rectangles} A \emph{proper rectangle} $R$ in an open hyperbolic surface $S$ is a rectangle in its ideal completion $\overline S =S \cup \partial^i S$ such that $\partial^h R\subset \partial^i S$. We naturally view vertical curves of $R$ as proper (and open) paths in $S$. We say that $R$ \emph{connects} boundary components  $J_0, J_1\subset \partial^i S $ if $\partial ^{h,0}R\subset J_0,\  \partial^{h,1}R\subset J_1$.

 If $S=U\setminus K$, where $U$ is a topological disk and $K\Subset U$ is a compact subset with finitely many connected components, then $\partial^i S$ has one \emph{outer} component $\partial^i U $; all remaining \emph{inner} components are parameterized by the components of $K$. If $J_0, J_1$ represent the boundaries of components $K_0, K_1$ of $K$, then we say that the above $R$ connects $K_0,K_1$.

\subsubsection{Universal cover} For compact Riemann surface  $V$ as in~\S\ref{sss:AD}, write its non-empty boundary as $\di V = \bigsqcup_k J_k$. We assume that $V$ has negative Euler characteristic $\chi(V)<0$. We set $\pi: \Disk \ra \inter V$ to be the 
universal covering, 
and we let $\La\subset \T=\partial \Disk$ be the limit set for the  group $\De$
of deck transformations of $\pi$.  Then $\pi$ extends continuously to $\T\sm
\La$, and
$ \pi:  \ol \D \sm \La \ra  V $ 
is the  universal covering of $V$. Moreover, $\pi$ restricted to  any component of
$\T\sm \La$  gives us a universal covering of some component $J_k$ of
$\di V$.  Such a component of $\T\sm \La$ will be denoted $\BJ_k$
(usually we will need only one component for each $k$, 
so we will not use an extra label for it). 
The stabilizer of $\BJ_k$ in $\De$ is a cyclic group generated by
a hyperbolic M\"obius transformation $\tau_k$ with fixed points in  $\di \BJ_k$.

\sss {Covering annuli and local weights}
\label{sss:CovAnn LocWeights}

We let 
\begin{equation}
\label{eq:CovAnn} \A(V, J_k)\coloneqq \Disk/<\tau_k>
\end{equation}
 be the \emph{covering annulus} of $V$ corresponding to $J_k$. We call its width \[\WW(J_k)\coloneqq \WW\left[ \A(V, J_k)\right]\]
the {\it local  weight} of $J_k$. We let
$$
  \Width(V) = \sum_k \WW(J_k)
$$
be the {\it total weight} of $V$. 

% \begin{lem}
%    We have: $\WW(J_k) = \WW)\BJ_k + O(1) $.
% \end{lem}

\sss{Canonical arc diagram}
\label{sss:WAD}
  
Recall that an arc  $\alpha$ is a proper path in $V$ up to proper homotopy. 
Any arc $\alpha$ connects some components $J_k$ and $J_i$ of $\di V$
and lifts to an arc $\ba$ in $\D$ connecting some intervals $\BJ_k$
and $\BJ_i$. We let  $\WW(\balpha)= \WW_{\overline \Disk}(\BJ_k, \BJ_i)$ to be the width of all curves in $\Disk$ connecting $\BJ_k$ and $\BJ_i$. Then $\WW(\balpha)$ is also the width of the rectangle $ \BPi_\ba=\ol \D$ with horizontal sides $\BJ_k$ and $\BJ_i$. It is independent of the lift used. 

If $\WW(\balpha)> 2$, then by removing from $\BPi_\ba$ the square buffers we obtain a
rectangle $\BPi^{\can}_\ba$.  In this case we let
$$
     \WW^\can(\alpha) = \WW^\can(\balpha) = \WW(\BPi^\can_\ba)  =  \WW(\balpha)-2.
$$ By construction, the $\Delta$-orbit of $\BPi^\can_\ba$ consists of pairwise disjoint rectangles. Therefore, $\BPi^\can_\ba$ projects injectively onto a rectangle $\BPi^\can_\alpha$ in $V$. Moreover, different $\BPi^\can_{\ba_i}$ project to pairwise disjoint $\BPi^\can_{\alpha_i}$.

If $\WW(\balpha)\leq 2$ we let  $\WW^\can(\alpha) =0$ and $\BPi^\can(\alpha)\coloneqq \emptyset$. Arcs with positive weight form the {\it canonical weighted arc diagram
(WAD) on $V$:}
\begin{equation}
\label{eq:WAD:dfn}
\AAA_V\equiv \WAD (V)\coloneqq\sum_{\Width^\can(\alpha)>0} \Width^\can(\alpha)\alpha .
\end{equation}

Every $\bPi^\can(\alpha)$ supports the canonical
vertical foliation $\Fam(\bPi^\can(\alpha)$. Altogether these foliations form the
{\it canonical lamination} of $V$:
\begin{equation}
\label{eq:can laminat}
\Fam(\AA_V)\coloneqq \bigsqcup_{\alpha \in \AA_V} \Fam(\bPi^\can(\alpha)),
\end{equation}
 We set
\begin{equation}
\label{eq:can laminat:weight}\Width(\AA_V)\coloneqq \sum_{\alpha\in \AA_V}  \Width(\alpha) =\Width\big( \Fam(\AA_V)\big).
\end{equation}

The canonical \emph{arc diagram} of $V$ is the set of arcs in $\AAA_V$ 
\[A_V \coloneqq \{\alpha\mid \Width^\can (\alpha)>0\}\equiv \AD[\AA_V].\]

\subsubsection{Local WADs and the thick-thin decomposition}\label{sss:LocalWAD}

For a boundary component $J_k$ of $\partial V$, its \emph{local WAD} $\AA_{J_k}$ consists of weighted arcs of $\AA_V$ adjacent to $J_k$ such that the weights of self-arcs adjacent to $J_k$ are doubled. We have:
\[   \sum_{J_k} \AA_{J_k}= 2\AA_V .\]

See~\cite[\S 7.6]{L:book} for a reference of the following fact. 

\begin{thm}[Thin-Thick Decomposition]\label{thin-thick for S}
 For any compact Riemann surface $V$ with boundary $\di V = \bigsqcup_k J_k$,  we have:
\[\Width(J_k)  -  C  \leq   \Width(\AA_{J_k}) \leq \Width(J_k)\sp\sp \text{ for every }J_k\]
and     
\[
    \sum_{J_k} \Width(J_k)   -  C  \leq  2 \Width(\AA_V) \leq     \sum_{J_k} \Width(J_k)  
=\Width(V),    \]
 where $C$ depends only on the topological type of $V$.
\end{thm}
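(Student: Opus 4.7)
The upper bound is the easy direction, obtained by embedding canonical rectangles into covering annuli; the lower bound is the substantive content and requires a decomposition of the vertical foliation of each covering annulus by arc-class plus a topological control over ``thick'' and ``thin'' contributions.

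\emph{Upper bound.} For each arc $\alpha \in \AD[\AA_V]$ adjacent to $J_k$, the canonical rectangle $\bPi^{\can}_\alpha$ embeds in $V$ and its lift $\bPi^{\can}_{\balpha} \subset \D$ has a pairwise disjoint $\tau_k$-orbit (\S\ref{sss:WAD}); hence $\bPi^{\can}_\alpha$ lifts injectively to the covering annulus $\A(V, J_k) = \D/\langle \tau_k\rangle$. For a self-arc at $J_k$, both horizontal sides embed into $\A(V, J_k)$, accounting for the doubling convention in the local WAD. Since the embedded canonical rectangles have pairwise disjoint supports, the widths of their vertical foliations add as sub-foliations of the vertical foliation of $\A(V, J_k)$, so
\[ \Width(\AA_{J_k}) = \sum_{\alpha \ni J_k} \Width^{\can}(\alpha) \le \Width(J_k). \]
Summing over $k$, and using $\sum_k \Width(\AA_{J_k}) = 2\Width(\AA_V)$, yields $2\Width(\AA_V) \le \Width(V)$.

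\emph{Lower bound.} Uniformize $V = \D/\Delta$ and fix a lift $\BJ_k$ of $J_k$ with stabilizer $\langle \tau_k\rangle$. Every vertical leaf of $\A(V, J_k) = \D/\langle \tau_k\rangle$ lifts to a proper path in $\D$ from $\BJ_k$ to some $\gamma \BJ_i$, determining, modulo $\tau_k^{\Z}$, an arc $\alpha$ at $J_k$. Grouping leaves by arc-class partitions the vertical foliation of $\A(V, J_k)$ into disjoint sub-laminations $\Fam_\alpha$, whose widths therefore add:
\[ \Width(J_k) = \sum_{\alpha \ni J_k} \Width(\Fam_\alpha) \le \sum_{\alpha \ni J_k} \Width(\balpha) \le \Width(\AA_{J_k}) + 2 N_k + W_k, \]
where $N_k$ is the number of ``thick'' arcs at $J_k$ (those with $\Width^{\can}(\alpha)>0$; each loses at most $2$ to square-buffer removal) and $W_k$ is the total weight of ``thin'' arcs (those with $\Width(\balpha) \le 2$). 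The desired inequality $\Width(J_k) - C \le \Width(\AA_{J_k})$ will follow from a topological bound $N_k + W_k \le C(V)$: since $V$ is compact and has no cusps (\S\ref{sss:AD}), its hyperbolic structure decomposes into a compact thick part of bounded geometry plus boundedly many collars of short boundary geodesics, so only boundedly many arc-classes at $J_k$ can carry width $>2$, and the Collar Lemma confines the total thin weight. Summing over $k$ then yields $\Width(V) - C' \le 2\Width(\AA_V)$.

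\emph{Main obstacle.} The subtle step is the bound on the thin contribution $W_k$: a priori many homotopy classes with $\Width(\balpha) \le 2$ could exist at $J_k$, and one must exclude their total weight from growing beyond a constant depending only on the topological type of $V$. This ultimately rests on the classical thick-thin dichotomy for compact hyperbolic surfaces and crucially exploits the absence of cusps (which would otherwise produce unbounded thin weight); the bulk of the routine work is in translating the standard hyperbolic-geometric bounds into the arc/rectangle language used in the statement.
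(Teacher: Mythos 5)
Your upper bound is correct: the canonical rectangles adjacent to $J_k$ lift disjointly into the covering annulus $\A(V, J_k)$ (with both copies of a self-arc, matching the doubling in $\AA_{J_k}$), giving $\Width(\AA_{J_k}) \le \Width(J_k)$; summing over $k$ and using $\sum_k \AA_{J_k} = 2\AA_V$ gives the remaining upper inequalities. The lower-bound set-up is also the right one: decompose the vertical foliation of $\A(V, J_k)$ by arc-class, compare $\Width(\Fam_\alpha)$ to $\Width(\balpha)$, and arrive at $\Width(J_k) \le \Width(\AA_{J_k}) + 2N_k + W_k$, where $N_k$ is the number of thick arcs at $J_k$ (each losing the two square buffers) and $W_k$ is the weight carried by thin arc-classes. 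Bounding $N_k$ by the topology of $V$ is indeed straightforward, since the thick arcs are realized by pairwise disjoint canonical rectangles and hence form a non-crossing system.

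The proposal, however, stops at exactly the step that carries the content: you flag the bound $W_k \le C(V)$ as the "main obstacle," then offer only an appeal to "the classical thick-thin dichotomy" and "the Collar Lemma," calling the remainder "routine." That is not a proof, and the tools you name do not obviously apply. The Collar Lemma concerns collars of short \emph{closed} geodesics, whereas what must be controlled is the total flat-cylinder measure of the strips in $\A(V, J_k)$ cut out by thin arc-classes, i.e., those with $\Width(\balpha) \le 2$. Nothing in your set-up bounds the \emph{number} of such classes with positive strip width by the topology of $V$: disjoint strips in the covering annulus can project to pairwise crossing arcs in $V$ (the covering $\A(V, J_k)\to V$ is not injective), so the non-crossing argument that worked for $N_k$ is unavailable, and each thin class can contribute width close to $2$. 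Controlling this cumulative thin contribution is precisely the non-trivial part of the theorem and requires an additional idea (a more careful analysis of the arc structure of the covering annulus than the thick-thin heuristic). Note also that the paper itself does not prove this theorem — it cites \cite[\S 7.6.3]{L:book} — so there is no in-paper argument to compare against; but as written, your proof has a genuine gap at the one step where the theorem's substance lives.
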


\subsubsection{Sub-diagrams and removing buffers}\label{sss:sub-diagram} Let $\AA\equiv\AA_V$ be the canonical WAD of $V$ as in~\S\ref{sss:WAD}, and let $B\subset A$ be a subset of arcs from $A\equiv\AD (\AA)$. Then the \emph{WAD induced by $B$} is the formal sum $\BB\coloneqq\sum_{\alpha\in B} \Width^\can(\alpha)\alpha$, compare to~\eqref{eq:WAD:dfn}. The weight $\Weight(\BB)$ of $\BB$ and its canonical lamination $\Fam(\BB)$ is defined as for $\AA$, see \eqref{eq:can laminat} and \eqref{eq:can laminat:weight}. In~\S\ref{sss:WADs}, we will specify the vertical and horizontal parts of the WAD associated with $\psi^\bullet$-ql maps.

For $B,\BB$ as above and $c>0$, we define
\[\BB-c \coloneqq \sum_{
\substack{\alpha\in B \\ 
\Width(\alpha)>c}
} (\Width^\can(\alpha)-c)\alpha,\]
\[B-c \coloneqq \AD (\BB-c);\]
i.e., we remove the $c$-weight from each arc. For a rectangle $\bPi^\can(\alpha)$ as  in~\S\ref{sss:WAD}, we denote by $\bPi^\can(\alpha)-c$ the rectangle obtained from $\bPi^\can(\alpha)$ by removing the $c/2$-buffers on each side. We call 
\[\Fam(\BB-c)\coloneqq  \bigsqcup_{\alpha \in \BB-c } \Fam(\bPi^\can(\alpha)-c)\]
the \emph{canonical lamination of $\BB-c$}. We also denote by  
\[\Fam^\full(\BB-c)\coloneqq  \bigsqcup_{\alpha \in \BB-c } \Fam^\full(\bPi^\can(\alpha)-c)\]
the full family of vertical curves within respective rectangles, see~\S\ref{ss:rectangles}. 

%\substack{c\text{ cycle}\\\text{of }\sigma_i}}

\subsubsection{Transformation rules}

\begin{lem}\label{generaltransform rules}
Let $i: S' \ra S$ be a  holomorphic map between
two compact  Riemann surfaces with boundary.
Then 

\ssk\nin $ \mathrm{(i)} $  If a boundary component $J' $ of $S'$ is mapped
with degree $d$ to a boundary component $J $ of $S$ then 
$ \WW(J')\leq d\cdot \WW(J)$;

\ssk\nin $\mathrm { (ii) } $
 If an arc $a ' $ of $S'$ is mapped
to an arc  $\alpha $ of $S$ then $\WW (\alpha') \leq \WW (\alpha)$.

\end{lem}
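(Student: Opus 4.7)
The plan is to prove both parts by lifting $i$ to the universal covers and comparing extremal lengths by pull-back of admissible metrics.

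For (i), I would lift $i$ to a holomorphic $\tilde i : \Disk \to \Disk$ between universal covers, choosing lifts so that $\tilde i(\BJ') \subseteq \BJ$. The boundary-degree-$d$ hypothesis yields the intertwining
\[
\tilde i \circ \tau' = \tau^d \circ \tilde i,
\]
where $\langle\tau'\rangle$ and $\langle\tau\rangle$ are the cyclic stabilizers of $\BJ'$ and $\BJ$. Consequently $\tilde i$ descends to a holomorphic map of covering annuli $\A(V',J') = \Disk/\langle\tau'\rangle \to \Disk/\langle\tau\rangle = \A(V, J)$ of essential winding number $d$, factoring through the natural $d$-fold cover $\Disk/\langle\tau^d\rangle \to \A(V, J)$. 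Applying the Schwarz lemma for annuli (the image of the core of $\A(V',J')$ wraps $d$ times around the core of $\A(V, J)$, which has hyperbolic length $\ge d\ell$, so by contraction of hyperbolic length the translation length $\ell'$ of $\tau'$ satisfies $\ell' \ge d\ell$) gives the desired inequality $\WW(J') \le d\,\WW(J)$ via the translation-length/width dictionary for hyperbolic annuli.

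For (ii), I again lift to $\tilde i : \Disk \to \Disk$ and pick compatible lifts $\BJ'_k, \BJ'_i$ of the boundary components at the endpoints of $\alpha'$ so that $\tilde i$ sends them into the analogous lifts $\BJ_k, \BJ_i$ for $\alpha$. Then every proper curve in $\Disk$ from $\BJ'_k$ to $\BJ'_i$ is mapped by $\tilde i$ to a proper curve from $\BJ_k$ to $\BJ_i$; equivalently $\tilde i\big(\Fam_{\overline{\Disk}}(\BJ'_k, \BJ'_i)\big) \subseteq \Fam_{\overline{\Disk}}(\BJ_k, \BJ_i)$. Taking an extremal admissible metric $\rho$ for $\Fam_{\overline{\Disk}}(\BJ_k, \BJ_i)$, its pull-back $\rho' = |\tilde i'|\cdot\rho\circ\tilde i$ is admissible for $\Fam_{\overline{\Disk}}(\BJ'_k, \BJ'_i)$ since $\int_{\gamma'}\rho' = \int_{\tilde i\gamma'}\rho \ge 1$. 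The change of variables
\[
\int_{\Disk} (\rho')^2 \, dA = \int_{\tilde i(\Disk)} N(w)\, \rho(w)^2 \, dA(w)
\]
together with local univalence of the lift (which gives $N\equiv 1$ on the relevant component) controls the area of $\rho'$ by that of $\rho$, yielding the extremal-length comparison equivalent to $\WW(\alpha')\le \WW(\alpha)$.

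The main delicate point is keeping the direction of the Schwarz-type inequality consistent: in (i) one must correctly identify the winding number of the induced essential map of annuli (so that the factor $d$ enters with the right exponent), and in (ii) one must verify that the pull-back argument yields a \emph{lower} bound on the source extremal length (and therefore an upper bound on the source width). Both tracking issues reduce to the standard fact that a holomorphic map is a contraction of the hyperbolic metric, and once the identification $\tilde i(\Fam') \subseteq \Fam$ (resp.\ the intertwining on stabilizers) is set up correctly, both inequalities are immediate from the pull-back computation.
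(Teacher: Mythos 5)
The main issue is in part (i), where the concluding inference is reversed; there is also a genuine gap in the univalence claim in part (ii).

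\textbf{Part (i).} Your set-up is fine: the lift $\tilde i$ intertwines $\tau'$ with $\tau^d$, descends to an essential holomorphic map of annuli factoring through $\Disk/\langle\tau^d\rangle$, and the hyperbolic-contraction argument correctly gives $\ell'\ge d\ell$, i.e.\ $\mod\A(V',J')\le\tfrac1d\mod\A(V,J)$. But the paper's dictionary is $\WW(J_k)=\Width\bigl[\A(V,J_k)\bigr]=1/\mod\A(V,J_k)=\ell_k/\pi$ (the width of a hyperbolic annulus is the \emph{reciprocal} of the modulus, and the modulus of $\Disk/\langle\tau\rangle$ is $\pi/\ell$). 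Under this dictionary $\ell'\ge d\ell$ gives
\[
\WW(J')=\frac{\ell'}{\pi}\ \ge\ d\,\frac{\ell}{\pi}=d\,\WW(J),
\]
which is the \emph{reverse} of the statement. So the final sentence ``gives the desired inequality $\WW(J')\le d\WW(J)$'' does not follow from $\ell'\ge d\ell$; in fact the Schwarz comparison on the covering annuli inherently bounds $\WW(J')$ from \emph{below}, with equality only when $i$ is an honest unbranched covering. The stated upper bound must be established by a different route: the intended argument is to prove (ii) first and then pass to (i) through the Thin-Thick Decomposition (Theorem~\ref{thin-thick for S}), using that $\WW(J')$ is comparable to $\Width(\AA_{J'})$, that each arc of $\AA_{J'}$ pushes forward to an arc of $\AA_J$ with no larger weight, and that at most $d$ lifts of any arc adjacent to $J$ are adjacent to $J'$.

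\textbf{Part (ii).} The pull-back of the extremal metric is the right idea, and the admissibility of $\rho'$ is fine. The gap is the claim that ``local univalence of the lift'' yields $N\equiv 1$. The lift $\tilde i:\Disk'\to\Disk$ is a lift of $i\circ\pi'$ through the \emph{unbranched} covering $\pi:\Disk\to S$, so $\tilde i$ inherits every critical point of $i$. In the paper's applications $i$ is typically $f$ or an iterate, which is a branched covering, so $\tilde i$ has branch points and is locally $2$-to-$1$ near them; hence $N>1$ on a set of positive measure and the area comparison $A(\rho')\le A(\rho)$ fails. To close this gap one must show that $\tilde i$ is actually injective (equivalently, invoke Lemma~\ref{tranform rule for rectangles}, which in turn requires verifying that the induced map of rectangles $(\overline{\Disk'};\BJ'_k,\BJ'_i)\to(\overline{\Disk};\BJ_k,\BJ_i)$ is proper and degree $1$); this is automatic when $i$ is an unbranched covering or an immersion (so $\tilde i$ is a self-covering of $\Disk$, hence an isomorphism), but it is precisely the content that needs an argument for branched $i$, and your proof currently asserts it without justification.

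In short: you should double-check the direction of the comparison in (i) --- the covering-annulus Schwarz estimate you use proves the opposite inequality --- and you should supply a genuine reason why $\tilde i$ can be taken injective on the support of the extremal metric in (ii).
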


\begin{lem}\label{tranform rule for rectangles}
   Let $\Bi: \BPi'\ra \BPi$ be a holomorphic map between two rectangles
which extends to homeomorphisms between the respective  horizontal  sides
$\BI'\ra \BI$ and $\BJ'\ra \BJ$. Then
$$
      \WW(  \BPi' ) \leq \WW ( \BPi ). 
$$ 
\end{lem}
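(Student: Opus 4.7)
The plan is the classical length--area estimate, reduced to the case of Euclidean rectangles.  First, by conformal uniformization of quadrilaterals, I would identify $\BPi$ with the Euclidean rectangle $E_x := [0,x]\times[0,1]$, where $x=\WW(\BPi)$, in such a way that $\BI$ and $\BJ$ correspond to the horizontal sides; similarly identify $\BPi'\cong E_{x'}$ with $x'=\WW(\BPi')$.  The hypothesis then says that $\Bi\colon E_{x'}\to E_x$ is a holomorphic map restricting to homeomorphisms between the respective horizontal sides, and the goal reduces to showing $x'\le x$.

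Second, equip $E_x$ with the flat Euclidean metric $\rho\equiv 1$ (extremal for $\Fam^\full(E_x)$) and pull it back to the metric $\rho'\coloneqq |\Bi'|$ on $E_{x'}$.  For any curve $\gamma'\in\Fam^\full(E_{x'})$, its image $\Bi\circ\gamma'$ is a path in $E_x$ with one endpoint on $\BI$ and the other on $\BJ$ (by the boundary hypothesis), so its Euclidean length is at least the vertical distance $1$.  Hence
\[
\int_{\gamma'}\rho'\,|dz|\;=\;\mathrm{length}_{\mathrm{Eucl}}(\Bi\circ\gamma')\;\ge\;1,
\]
which makes $\rho'$ admissible for $\Fam^\full(E_{x'})$.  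On the other hand, by the change-of-variables formula for non-injective holomorphic maps,
\[
A(\rho')\;=\;\int_{E_{x'}}|\Bi'|^2\,dA\;=\;\int_{E_x}N_\Bi(w)\,dA_w\;\le\;\mathrm{Area}(E_x)\;=\;x,
\]
where $N_\Bi(w)=\#\bigl(\Bi^{-1}(w)\cap E_{x'}\bigr)$ and the last inequality uses that $\Bi$ has topological degree one, so $N_\Bi\le 1$ almost everywhere.

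Combining these estimates with the variational characterization of width as reciprocal of extremal length,
\[
\WW(\BPi')\;=\;\WW\bigl(\Fam^\full(E_{x'})\bigr)\;\le\;\frac{A(\rho')}{\bigl(\inf_{\gamma'}\int_{\gamma'}\rho'\,|dz|\bigr)^2}\;\le\;\frac{x}{1}\;=\;\WW(\BPi),
\]
which is what we want.  The main obstacle is the degree-one assertion used in the area bound: the hypothesis only constrains $\Bi$ on the two horizontal sides, and nothing is said about how the vertical sides of $E_{x'}$ map.  The cleanest way around this is to Schwarz-reflect $\Bi$ across $\BI$ and $\BJ$, using that the homeomorphism hypothesis produces an injective real-analytic extension along these arcs; the resulting holomorphic continuation to a neighborhood of $E_{x'}\cup\BI'\cup\BJ'$ together with the argument principle applied at any interior regular value then forces the topological degree of $\Bi\colon E_{x'}\to E_x$ to be one.
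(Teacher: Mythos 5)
Your length--area setup is correct, and the admissibility of the pulled-back metric is argued properly: images of curves in $\Fam^\full(E_{x'})$ do join the two horizontal sides of $E_x$, hence have Euclidean length at least $1$. The genuine gap is the area bound, i.e.\ the claim that $N_\Bi\le 1$ a.e.\ (equivalently, that $\Bi$ has topological degree one). You flag this issue yourself, but the proposed repair does not close it. Schwarz reflection across $\BI'$ and $\BJ'$ indeed extends $\Bi$ holomorphically, but nothing in the hypotheses controls $\Bi$ near the two \emph{vertical} sides of $E_{x'}$: the map need not be proper, the images of the vertical sides may enter the interior of $E_x$ and wind around, and so the argument-principle winding-number count is not available. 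Without properness ``topological degree'' is not even well defined, and $N_\Bi$ could a priori be large (indeed unbounded) near the vertical sides. Thus the inequality $A(\rho')\le x$ is unjustified, and the length--area route cannot be completed in the stated form.

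The Schwarz-reflection idea is nevertheless exactly the right move; it should just be aimed at a modulus comparison rather than at degree-counting. Iterating the reflections produces a holomorphic $\tilde\Bi\colon \Sigma'\to\Sigma$ between the full open strips $\Sigma'=(0,x')\times\R$ and $\Sigma=(0,x)\times\R$ satisfying $\tilde\Bi(z+2i)=\tilde\Bi(z)+2i$. Passing to the quotients by $z\mapsto z+2i$ yields a holomorphic map $\hat\Bi\colon A'\to A$ between open annuli with $\mod A'=x'/2$ and $\mod A=x/2$. Since the image of a fundamental vertical segment $\{s_0\}\times[0,2]$ joins a point to its $+2i$ translate, $\hat\Bi$ is essential of homotopy degree one. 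The Schwarz lemma for annuli (proved via the hyperbolic metric and Schwarz--Pick on the universal covers, not via a length--area count) now gives $\mod A'\le\mod A$, i.e.\ $x'\le x$, which is precisely $\WW(\BPi')\le\WW(\BPi)$. This route requires neither injectivity of $\Bi$ nor any control of the multiplicity function.
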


\subsubsection{WAD under covering}
\label{sss:WAD:cov}

Assume that $f \colon U\to V$ as a finite-degree covering between surfaces with boundaries, where $\chi(V)<0$. Then, see~\cite[Lemma 3.3]{K}
\begin{equation}
\AA_U=f^*\AA_V, \sp \text{ where }\sp  f^*\AA_V=\sum_{\alpha \in \AA_V} \Weight^\can(\alpha) f^{-1}(\alpha) 
\end{equation}
and $ f^{-1}(\alpha)$ is the set of preimages of $\alpha$.

\subsection{$\psi^\bullet$-ql renormalization}\label{psi-renorm} As in~\S\ref{renorm sec}, consider a ql map $f\colon X\to Y$ and its ql prerenormalization 
\begin{equation}
\label{eq:f n,0}
f_{n,0}=f^{p_n}\colon X_n\to Y_n.
\end{equation}

\subsubsection{Motivation}
\label{sss:psi renorm: motiv}
A major technical issue is that there is no natural choice of $Y_n$ in~\eqref{eq:f n,0} so that ${\Width (Y_n\setminus \filled_n)}$ is optimal and satisfies, in particular,~\eqref{eq:outline:2}. To handle this problem, a $\psi$-ql renormalization was introduced in~\cite{K}: assuming that~\eqref{eq:f n,0} is primitive and $Y_n\cap \bfilled ^{[n]}=\filled^{[n]}_0$, \emph{extend $f_n\colon X_n \to Y_n$ along all curves in $Y_n\setminus \bfilled^{[n]}$}; the result is a correspondence $F_n=(f_n,\iota_n)\colon U_n\rightrightarrows V_n$, where
\[V_n\coloneqq \A\left(Y\setminus \bfilled^{[n]}, \ \partial \filled^{[n]}_0\right) \bigcup _{\partial \filled^{[n]}_0} \filled^{[n]}_0\] is the covering annulus~\eqref{eq:CovAnn} of $Y\setminus \bfilled^{[n]}$ rel $\partial \filled^{[n]}_0$ glued with $\filled^{[n]}_0$ and 
\[U_n\coloneqq \A\left( X\setminus f^{-1}(\bfilled^{[n]}), \ \partial \filled^{[n]}_0\right) \bigcup _{\partial \filled^{[n]}_0} \filled^{[n]}_0\] 
is the covering annulus of $X\setminus f^{-1}\left(\bfilled^{[n]}\right)$ rel $\partial \filled^{[n]}_0$ glued with $\filled^{[n]}_0$. The correspondence $F_n$ is called a \emph{$\psi$-ql map} and it is independent of the choice of $Y_n$ in~\eqref{eq:f n,0}.  A $\psi$-ql renormalization can be naturally iterated. Moreover, $\psi$-ql bounds can be converted into ql bounds, see Lemma~\ref{lem:from psi-ql to ql}.

In this section, we will introduce $\psi^\bullet$-renormalization by replacing $\bfilled^{[n]}$ with the periodic cycle $\bbush^{[n]}$ of little bushes so that it is also applicable for satellite combinatorics. 

\begin{rem}
The viewpoint that various classes of self-correspondences ${(g,h)\colon A \rightrightarrows B}$ form interesting dynamical systems was popularized by Sullivan and later by A. Epstein in the context of deformation spaces. In the 2000s, it became apparent that self-correspondences give a natural framework to study some of the classical dynamical systems~\cite{IS,K, N}. See also~\cite{BD, T} for more recent developments.
\end{rem}

\subsubsection{Definitions}
\label{sss:qlb:Defn}
A {\em  pseudo${}^\bullet$-quadratic-like map} (``$\psi^\bullet$-ql map'') 
is a pair of holomorphic  maps 
\begin{equation}
\label{eq:dfn:psib-ql}
F=(f,\iota)\colon \sp ( U, \bush')  \rightrightarrows (V, \bush )
\end{equation} between two conformal  disks $U$, $V$
with the following properties:

\begin{enumerate}[label=\text{(\Roman*)},font=\normalfont,leftmargin=*]
\item\label{dfn:psib-ql:1}  $f\colon U\to V$ is a double branched covering
    (we usually normalize it so that its critical point is located at $0$);

\item \label{dfn:psib-ql:2} $\iota\colon U\to V$ is an immersion;

\item \label{dfn:psib-ql:3}
 $\bush$ and $\bush'$  are hulls (compact connected full sets) such that \[\iota^{-1} (\bush)=\bush'  \subset f^{-1} (\bush) ;\]

\item \label{dfn:psib-ql:4}
 there exist neighborhoods $X'\supset \bush'$ and $X \supset \bush$ with the following property: $\iota: X' \ra X$ is a conformal isomorphism such that
\begin{equation}\label{eq:dfn:ql b domain}
f_X\coloneqq f\circ \big(\iota\mid X'\big)^{-1} : X \to f(X')\eqqcolon Y 
\end{equation} 
 is a quadratic-like map with connected filled Julia set
\begin{equation}
\label{eq:dfn:non-esc b} \filled_F\coloneqq \filled\big(f_X : X\to Y\big),
\end{equation}
 and such that $\bush\equiv\bush_F\equiv \bush(f_X)$ is the bush of $f_X$.
 \end{enumerate}

Since $\iota$ is a conformal isomorphism in a neighborhood of $\filled_F$, we will below identify 
\[\filled_F\simeq \big(\iota\mid X\big)^{-1}(\filled_F)\sp\sp \text{ and hence }\sp\sp\bush\simeq \bush'\equiv \bush_F,\]
 and write  \[F\colon ( U, \bush)  \rightrightarrows (V, \bush) \sp\sp\text{ or }\sp\sp F\colon U \rightrightarrows V.\]

Let us say that a subset $\Omega\subset \filled_F$ is called \emph{$\iota$-proper} if
\[\iota^{-1}(\Omega)=\Omega.\] 
Item~\ref{dfn:psib-ql:3} implies that $\bush_F$ is $\iota$-proper. If $\filled_F$ is $\iota$-proper, then $F$ is $\psi$-ql map~\cite{K}. 

\subsubsection{$\psi^\bullet$-ql renormalization}\label{sss:psi b renorm} Consider a $\psi^\bullet$-ql map $F$ from~\eqref{eq:dfn:psib-ql}, and assume that $f_X$ (see~\eqref{eq:dfn:ql b domain}) is $n+1$ DH renormalizable. Let $f_{n,i}=f^{p_n}\colon X_{n,i}\to Y_{n,i}$ be a ql prerenormalization of $f_X\colon X\to Y$. The associated $\psi^\bullet$-ql renormalization is the extension of  $f_{n,i}\colon X_{n,i} \to Y_{n,i}$ along all curves in $V\setminus \bbush_n$, compare with~\S\ref{sss:psi renorm: motiv}; the result is a $\psi^\bullet$-ql map $F_{n,i}=(f_{n,i},\iota_{n,i})\colon U_{n,i}\rightrightarrows V_{n,i}$, where

\[V_{n,i}\coloneqq \A\left(V\setminus \bbush^{[n]}, \ \partial \bush^{[n]}_i\right) \bigcup _{\partial \bush^{[n]}_i} \bush^{[n]}_i\] is the covering annulus~\eqref{eq:CovAnn} of $V\setminus \bbush^{[n]}$ rel $\partial \bush^{[n]}_i$ glued with $\bush^{[n]}_i$ and 
\[U_{n,i}\coloneqq \A\left( U\setminus f_{n,i}^{-1}(\bbush^{[n]}), \ \partial \bush^{[n](1)}_i\right) \bigcup _{\partial \bush^{[n](1)}_i} \bush^{[n](1)}_i\] 
is the covering annulus of $U\setminus f^{-1}(\bbush^{[n]})$ rel $\partial \bush^{[n](1)}_i$ glued with $\filled^{[n]}_i$.

We will write suppress index ``$0$'' for $F_{n,0}$:
\[F_{n,0}\equiv F_{n}=(f_{n},\iota_{n})\colon U_{n}\rightrightarrows V_{n}, \hspace{0.4cm}\text{ where } \sp U_n\equiv U_{n,0},\ V_n\equiv V_{n,0}.\]

We write
\begin{equation}
\label{RR bullet f}
\RR^{n\bullet }( F) \coloneqq F_n \equiv F_{n,0}.
\end{equation}

\begin{rem}\label{rem:general constr} The theory works similarly if the bush $\bush$ is replaced with any connected forward invariant set $\Upsilon$ satisfying $\Hub\subset \Upsilon\subset \filled$, where $\Hub$  is the Hubbard continuum. Namely, let us say that a map
\begin{equation}
\label{eq:psi^* ql map}
F=(f,\iota)\colon \sp ( U,  \Upsilon')  \rightrightarrows (V, \Upsilon )
\end{equation} is \emph{$\psi^\Upsilon$-ql}
if it satisfies~\ref{dfn:psib-ql:1} -- \ref{dfn:psib-ql:4} from~\S\ref{sss:qlb:Defn} so that $\bush'\simeq \bush=\bush(f_X)$ are replaced with \[\Upsilon'=\iota^{-1} (\Upsilon)\simeq \Upsilon=\Upsilon(f_X),\hspace{1.5cm}\text{ i.e., }\Upsilon \text{ is $\iota$-proper}.\] For example, one can take $\Upsilon =\Hub$ or $\Upsilon=\Hub\cup \bbush^{[m]}$. To define ``$\psi^\Upsilon$-renormalization'', one should extend a ql prerenormalization  $f_{n,i}\colon {X_{n,i} \to Y_{n,i}}$ along all curves in $V\setminus  \bigcup_{j}  \Upsilon_{f_{n,j}}$. Moreover, $\Upsilon_{f_{n,j}}$ can often be enlarged, see~\S\ref{sss: psi^b to psi}.
\end{rem}

\begin{figure}[t!]
\[\begin{tikzpicture}
\begin{scope}
\draw[line width=0.4mm] (-0.5,0)-- (2.5,0)
(0,0.5)--(0,-0.5)
(1,0.5)--(1,-0.5)
(2,0.5)--(2,-0.5);
   
\draw[red,line width=0.8mm] (-0.3,0) -- (0.3,0)
(0,0.2)--(0,-0.2);  
\draw[red,line width=0.8mm,shift={(1,0)}] (-0.3,0) -- (0.3,0)
(0,0.2)--(0,-0.2);  
\draw[red,line width=0.8mm,shift={(2,0)}] (-0.3,0) -- (0.3,0)
(0,0.2)--(0,-0.2);
\end{scope}

\begin{scope}[shift={(4.5,0)}]
\draw[ dashed ,line width=0.3mm] (-0.5,0)-- (2.5,0)
(0,0.8)--(0,-0.8)
(1.5,0.8)--(1.5,-0.8)
(3,0.8)--(3,-0.8);
   
\draw[red,line width=0.8mm] (-0.3,0) -- (0.3,0)
(0,0.2)--(0,-0.2);  

\draw[red,line width=0.8mm,shift={(3,0)}] (-0.3,0) -- (0.3,0)
(0,0.2)--(0,-0.2);

\draw[purple,line width=1mm,shift={(1.5,0)}] 
 (-0.3,0) -- (0.3,0)
(0,0.2)--(0,-0.2);

\draw[blue,line width=0.4mm] (1.5,0.2) .. controls  (1.8, 0.2) and (2.2,0.2) .. (2.3,0)
.. controls  (3, -0.7) and (3.3, -0.8)  .. (3.6, -0.8)
.. controls (4,-0.8) and (4.5,-0.7) .. (5.,0);

\draw[blue,line width=0.4mm]
(3.4, -0.68)-- (3.6, -0.8)--(3.4, -0.88);
\node[blue, below]at (3.6, -0.9)  {$\gamma_1$};
 
\draw[blue,line width=0.4mm] (1.5,-0.2) .. controls  (1.2, -0.2) and (1.1,-0.2) .. (0.5,0)
.. controls  (0, 1.5) and (-5,1.5)  ..  (-5.5,0)
.. controls  (-5,-1.2) and  (-3,-1.6) .. (-2.5,-1.6)
.. controls  (-1,-1.6) and  (0, -1.1) .. (0,-0.8);

\node[above,blue] at (-2.5,-1.4) {$\gamma_2$};
%\draw[blue,line width=0.4mm]
\draw[blue,line width=0.4mm]
(-2.7,-1.48) --(-2.5,-1.6)--(-2.7,-1.69);

\end{scope}

\begin{scope}[shift={(10,0)}]
\draw[line width=0.4mm] (-0.5,0)-- (2.5,0)
(0,0.5)--(0,-0.5)
(1,0.5)--(1,-0.5)
(2,0.5)--(2,-0.5);
   
\draw[red,line width=0.8mm] (-0.3,0) -- (0.3,0)
(0,0.2)--(0,-0.2);  
\draw[red,line width=0.8mm,shift={(1,0)}] (-0.3,0) -- (0.3,0)
(0,0.2)--(0,-0.2);  
\draw[red,line width=0.8mm,shift={(2,0)}] (-0.3,0) -- (0.3,0)
(0,0.2)--(0,-0.2);
\end{scope}
\end{tikzpicture}\]
\caption{Illustration to Sup-Chain Rule~\eqref{eq:sup chain rule}, where $n_1=n_2=1$. Level one bushes $\bbush^{[1]}$ are depicted black and dashed black (the central bush $\bush^{[1]}_0$), while level two  bushes $\bbush^{[2]}$ are depicted red and purple (the central bush $\bush^{[2]}_0$). The map $\RR^{\bullet}\circ \RR^{\bullet}(F)$ is obtained from $F$ by extending the ql prerenormalization $f_{2,0}$ (defined near $\bush^{[2]}_0$, purple) along some paths $\gamma_i$ in $V\setminus \bUpsilon$ (where $\bUpsilon$ consists of non-dashed sets). The curve $\gamma_1$ terminates at $\bUpsilon$ (i.e., no extension beyond). The curve $\gamma_2$ terminates at $\bbush^{[1]}_0$ because it makes a non-trivial loop around $\bbush^{[1]}\setminus \bush^{[1]}_0$.  Since $\RR^{2 \bullet } F$ is obtained by extending $f_{2,0}$ along \emph{all} paths in $V\setminus \bbush^{[2]}$ (where $\bbush^{[2]}$ is red and purple), we obtain the Sup-Chain Rule $\RR^{\bullet}\circ \RR^{\bullet}(F)\subset \RR^{2 \bullet } F$.}
\label{Fg:sup chain rule}
\end{figure}

\subsubsection{Sup-Chain Rule for renormalization domains} \label{sss:sup chain rule}Consider a $\psi^\bullet$-ql map $F$ as in~\eqref{eq:dfn:psib-ql}. Since 
\begin{enumerate}
\item \label{eq:SupChainRule:1}$\RR^{n_1\bullet} \circ \RR^{n_2 \bullet} (F)$, see \eqref{RR bullet f}, is the extension of $f_{n_1+n_2,0}$ along curves in $V\setminus \bUpsilon$ (subject to certain restrictions illustrated on Figure~\ref{Fg:sup chain rule}), where
\[\bUpsilon\coloneqq \bbush^{[n_1+n_2]}\cup \left(\bbush^{[n_2]}\setminus \bush^{[n_2]}_0\right),\]
\item $\RR^{n_1+n_2 \bullet} (F)$ is the extension of $f_{n_1+n_2,0}$ along all curves in $V\setminus \bbush^{[n_1+n_2]},$  
\item and every connected component of $\bbush^{[n_1+n_2]}$ is within a connected component of $\bUpsilon$ (compare with Figure~\ref{Fg:sup chain rule}),
\end{enumerate}
we obtain the natural embedding of dynamical systems (respecting all the maps)
\begin{equation}
\label{eq:sup chain rule}
\RR^{n_1\bullet} \circ \RR^{n_2 \bullet} (F) \ \subset \ \RR^{n_1+n_2 \bullet} (F).
\end{equation}

\subsubsection{Unbalanced renormalization from $\psi^\bullet$ to $\psi$-ql maps}\label{sss: psi^b to psi} Consider a $\psi^\bullet$-ql map $F$ from~\eqref{eq:dfn:psib-ql}, and assume that $f_X$ (see~\eqref{eq:dfn:ql b domain}) is twice DH renormalizable. Let $f_{1,0}\colon X_{1,0}\to Y_{1,0}$ be a ql prerenormalization of $f_X$. 

By Item~\ref{dfn:psib-ql:3} of the definition of $F$, the little Julia set $\filled^{[1]}_{0}\equiv \filled(f_{1,0})$ is $\iota$-proper. Therefore, we can extend $f_{1,0}\colon X_{1,0}\to Y_{1,0}$ along all curves in $\bUpsilon\coloneqq \filled^{[1]}_{0}\cup \bbush^{[1]}$ and obtain $\psi$-ql map 
\begin{equation}
\label{eq:F^Ups} F_{\bUpsilon,1} = (f_{1,0},\iota_{1,0})\colon U _{\bUpsilon,1}\to V_{\bUpsilon,1}.
\end{equation}
 
We recall from~\S\ref{sss:mixed conf} that the set $\bUpsilon$ is inhomogeneous: $f_X$ does not permute components of $\bUpsilon$ .

\subsubsection{Restrictions}\label{sss:FibProd} A $\psi^\bullet$-ql map $F$ as in~\eqref{eq:dfn:psib-ql} has the natural \emph{restriction} (also known as the \emph{pullback}, \emph{fiber product}, \emph{graph}) denoted by 
\[F=(f,\iota)\colon U^2\rightrightarrows  U^1=U,\sp\sp \text{ where }\sp U^2=\{(x,y)\in U\times U\mid\sp f(x)=\iota(y)\},\]
where $f,\iota\colon U^2\to U$ are component-wise projections; see~\cite[\S2.2.2]{K}. Note that $F\colon U^2\rightrightarrows  U^1$ is also a $\psi^\bullet$-ql map. Repeating the construction, we obtain the sequence
\[F\colon U^k\rightrightarrows U^{k-1},\hspace{1cm} n\ge 1, \sp U^0=V ,\sp U^1=U,\]
together with induced iterations denoted by
\begin{equation}
\label{eq:F^k}
 F^k = \big(f^k, \iota^k\big) \colon U^k \to V.
\end{equation}  

Since $\iota\mid U^{k+1}$ is the lift of $\iota\mid U$ under the covering $f^k$, we have:

\begin{lem}
\label{lem:iota proper:preim}
The set $f^{-k}(\bush_F)$ is $\iota$-proper for $F\colon U^{k+1}\rightrightarrows U^k$.\qed
\end{lem}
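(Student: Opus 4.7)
The plan is to reduce $\iota$-properness of $f^{-k}(\bush_F)$ for the $k$-th restriction to the baseline $\iota$-properness of $\bush_F$ itself for the original $F\colon U\rightrightarrows V$, which is built into property~\ref{dfn:psib-ql:3}. The bridge is the commutation of $\iota$ with the iterated branched cover, which is exactly what the sentence preceding the lemma records: $\iota\mid U^{k+1}$ is, by construction, the lift of the original $\iota\mid U$ under $f^k\colon U^k\to V$. Unwinding the fiber-product definition, this translates into the identity
\[
f^k\circ\iota \;=\; \iota\circ f^k\colon U^{k+1}\to V,
\]
where $\iota$ on the left is the restricted map $U^{k+1}\to U^k$ and $f^k$ on the right is the composed cover $U^{k+1}\to U$.

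First I would formally record the displayed commutation by induction on $k$, with base case the defining relation $f(x)=\iota(y)$ of $U^2=\{(x,y)\in U\times U\mid f(x)=\iota(y)\}$, i.e.\ $f\circ\iota=\iota\circ f$ for the single restriction $U^2\rightrightarrows U$; the inductive step simply applies this on the top layer $U^{j+1}\rightrightarrows U^j$. Then I would verify the desired set equality pointwise. For $p\in U^{k+1}$,
\[
\iota(p)\in f^{-k}(\bush_F)\ \iff\ f^k(\iota(p))=\iota(f^k(p))\in \bush_F,
\]
and by $\iota$-properness of $\bush_F$ in the base map $F\colon U\rightrightarrows V$ (property~\ref{dfn:psib-ql:3}) the right-hand side is equivalent to $f^k(p)\in\bush_F$, i.e.~to $p\in f^{-k}(\bush_F)$ regarded inside $U^{k+1}$ via the canonical identification $\bush\simeq\bush'$ provided by $\iota\colon X'\to X$. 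This yields $\iota^{-1}(f^{-k}(\bush_F))=f^{-k}(\bush_F)$, as required.

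There is no substantial obstacle; the whole argument is bookkeeping once the commutation is in hand. The one care-point is the identification convention implicit in the statement: the set $f^{-k}(\bush_F)$ sits simultaneously in $U^k$ (as the preimage under $f^k\colon U^k\to V$) and in $U^{k+1}$ (as the preimage under $f^k\colon U^{k+1}\to U$, using the identification of $\bush$ inside $V$ and $U$), and the $\iota$-properness equality is between these two copies. Once this is fixed, the lemma collapses to the two displays above.
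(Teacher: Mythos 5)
Correct, and this is exactly the paper's intended (and unwritten) argument: the paper's one-line justification "since $\iota\mid U^{k+1}$ is the lift of $\iota\mid U$ under the covering $f^k$" is precisely your commutation identity $f^k\circ\iota=\iota\circ f^k$, established inductively from the fiber-product relation $f(x)=\iota(y)$. Pulling $\bush_F$ back through that identity and invoking the $\iota$-properness of $\bush_F$ from Item~\ref{dfn:psib-ql:3} of the definition gives the claimed set equality, modulo the bookkeeping about where $f^{-k}(\bush_F)$ sits, which you correctly flag and resolve.
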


\subsubsection{Sup-Chain rule for iterations}\label{sss:SupChainRule:iterations} Consider a  ql map $g\colon A\to B$. Assume that a $\psi$-ql map $F\colon U\rightrightarrows V$ is obtained from $g\colon A\to B$ using finitely many $\psi^\bullet$-ql renormalizations followed by the unbalanced renormalization from~\S\ref{sss: psi^b to psi}. Then $F$ has a ql restriction $f_X\colon X\to Y$ as in~\eqref{eq:dfn:ql b domain} that is realized in the dynamical plane of $g$:
\[ f_{X} \simeq f_{X,g}, \sp \sp \text{ where }\sp f_{X,g}=g^p\colon X_g\to Y_g.\] 
Consider the filled Julia set $\filled^{[n]}_i\equiv \filled(f_{X,g})\subset B$ of $f_{X,g}=g^p\colon X_g\to Y_g$. Let $G\colon N\rightrightarrows M$ be the $\psi$-ql map obtained from $g\colon A\to B$ by extending $f_{X,g}=g^p\colon X_g\to Y_g$ along all paths in $\bUpsilon\coloneqq \filled^{[n]}_i\cup \bbush^{[n]}$; compare with~\S\ref{sss: psi^b to psi} where the case $(n,i)=(1,0)$ is defined. By construction, we have a covering map
\[\rho \colon M\setminus \filled_G \to B\setminus \bUpsilon\] so that $\rho$ induces an isomorphism $\rho \colon \partial \filled_G \overset{\simeq }\longrightarrow \partial\filled^{[n]}_i.$ The Sup-Chain Rule~\S\ref{sss:sup chain rule} implies by induction that $F\subset G$. Therefore, we have the induced partial covering:
\begin{equation}
\label{eq:part_covering}
\rho'\colon V\setminus \filled_F \hookrightarrow M\setminus \filled_G\overset{\rho}\longrightarrow B\setminus \bUpsilon
\end{equation}
so that $\rho' \colon \partial \filled_F \overset{\simeq }\longrightarrow \partial\filled^{[n]}_i.$ Let $\gamma$ be the core hyperbolic geodesic of the annulus $V\setminus \filled_F$. Then $\gamma_F$ can be described as the set points $x$ such that the Brownian motion starting at $x$ has equal probabilities of hitting each of the boundary components of the annulus $V\setminus  \filled_F$. Therefore, the image of the subdisk $V_\gamma\Subset V$ bounded by $\gamma$ under $V\setminus \filled_F \hookrightarrow M\setminus \filled_G$ is within the subdisk $M_\beta\Subset M$ bounded by the core hyperbolic geodesic $\beta$ of the annulus $M\setminus \filled_G$. Since $\rho$ is a covering with $\rho \colon \partial \filled_G \overset{\simeq }\longrightarrow \partial\filled^{[n]}_i,$ we have $\rho\colon \beta\overset{1:1}\longrightarrow \rho(\beta)$ and $\rho(\beta)$ is a hyperbolic geodesic of $B\setminus \bUpsilon$. Therefore, $V\gamma\setminus \filled_F$ descents univalently to the dynamical plane of $g\colon A\to B$ via the partial covering~\eqref{eq:part_covering}.
 
\subsection{Width and WADs of $\psi^\bullet$ maps}\label{ss:Width+WAD}

 For a $\psi^\bullet$-ql map $F\colon U\rightrightarrows V$ we write \[\Width_\bullet(F)\coloneqq \Width(V\setminus \bush_F).\] 
 If $F$ is $\psi$-ql map  (i.e., $\filled_F$ is $\iota$-proper), then we can also measure
  \[\Width(F)\coloneqq \Width(V\setminus \filled_F).\]

The Gr\"otzsch inequality easily implies 
\[
 \Width\left(U^k\setminus \bush\right)\le\Width\left(U^k\setminus f^{-k}(\bush)\right)= 2^k\Width_\bullet(F).
\]

The Sup-Chain Rule~\eqref{eq:sup chain rule} implies that
\begin{equation}
\label{eq:width:sup chain rule}
\Width\left[\RR^{n_1\bullet} \circ \RR^{n_2 \bullet} (F) \right] \ge \Width\left[ \RR^{n_1+n_2 \bullet} (F)\right].
\end{equation}

 \subsubsection{Compactness of $\psi$ and $\psi^\bullet$-ql maps}\label{sss:compactness}  If $\Width(F)$ is bounded for $\psi$-ql map $F$, then $F$ also has regular ql bounds: 

\begin{lem}[{\cite[Lemma 2.4]{K}}]
\label{lem:from psi-ql to ql}
There is a positive function $\mu(K)$ with the following property. If $F$ is a $\psi$-ql map with $\Width(F)\le K$, then the quadratic-like map $f_X\colon X\to Y$ in~\eqref{eq:dfn:ql b domain} can be selected so that    
\begin{equation}
\label{eq:lem:from psi-ql to ql}\mod \left(Y\setminus X \right) \ge \mu(K).
\end{equation}
\end{lem}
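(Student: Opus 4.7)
\emph{Plan.} I would prove the lemma by a Carath\'eodory compactness argument on the space of $\psi$-ql maps with bounded width.

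Given $F=(f,\iota)\colon U\rightrightarrows V$ with $\Width(F)\le K$, i.e.\ $\mod(V\setminus\filled_F)\ge 1/K$, I first normalize by conformally uniformizing $V=\Disk$ and placing a chosen basepoint of $\filled_F$ at $0$. The width bound then pins $\filled_F$ inside a compact subset of $\Disk$. Using the conformal chart $\iota\colon X'\to X$ from Definition~\ref{dfn:psib-ql:4} to identify a neighborhood of $\filled'_F$ with a neighborhood of $\filled_F$, and similarly normalizing $U$, the family $\FF_K$ of such normalized $\psi$-ql maps becomes a normal family (since $f$ and $\iota$ map into $\Disk$), hence is precompact in the Carath\'eodory topology.

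Next, for $F\in\FF_K$ I would set
\[\mu(F)\coloneqq\sup\bigl\{\mod(Y\setminus X)\;:\;(X',X,Y)\text{ realizes }\ref{dfn:psib-ql:4}\bigr\}.\]
By the definition of a $\psi$-ql map, $\mu(F)>0$ for every $F\in\FF_K$. The plan is to show that $\mu$ is lower semi-continuous on $\FF_K$: given a convergent sequence $F_n\to F_\infty$ and ql restrictions $(X_n,Y_n)$ realizing $\mu(F_n)-1/n$, extract a Carath\'eodory (Hausdorff) limit $(X_\infty,Y_\infty)$ of $(X_n,Y_n)$; then $X_\infty\Subset Y_\infty$, the pair $(X_\infty,Y_\infty)$ realizes a ql restriction for $F_\infty$, and by continuity of modulus $\mod(Y_\infty\setminus X_\infty)\ge\limsup_n\mu(F_n)$. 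Combined with precompactness, positivity of $\mu$ on $\FF_K$ forces $\mu$ to attain a positive minimum $\mu(K)>0$, yielding the lemma.

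The main obstacle is proving that a Carath\'eodory limit of $\psi$-ql maps in $\FF_K$ is itself a non-degenerate $\psi$-ql map. Specifically, I need to verify that: (i) $\iota_\infty$ remains an immersion rather than degenerating to a constant, which would follow from the fact that $\iota_n(\filled'_{F_n})=\filled_{F_n}$ has definite size by the normalization; (ii) $\filled_{F_\infty}$ is non-degenerate and connected, coming from the uniform lower bound $\mod(V\setminus\filled_{F_n})\ge 1/K$ together with Hausdorff continuity of filled Julia sets under Carath\'eodory convergence; and (iii) the $\iota$-properness $\iota_\infty^{-1}(\filled_{F_\infty})=\filled'_{F_\infty}$ persists under the limit — this is where a Hurwitz-type argument is needed to exclude new preimages of $\filled_F$ appearing outside $\filled'_F$ in the limit. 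These verifications, while standard in spirit, constitute the delicate technical heart of the argument and are what ultimately produces the uniform function $\mu(K)$.
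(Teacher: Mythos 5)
The paper does not prove this lemma; it cites it directly from \cite[Lemma~2.4]{K}, and the remark after Lemma~\ref{lem:compac:psi bullet} makes clear that Kahn's argument is a direct geometric one: one shows that the immersion $\iota$ is an embedding on a neighborhood of $\filled_F$ of a definite modulus (depending only on $K$), and then builds the ql restriction by hand. Your compactness route is a genuinely different approach, but as written it has a concrete logical gap in the semicontinuity step. What your limiting argument would establish (granting the Carath\'eodory convergence) is that a limit of near-optimal ql restrictions of $F_n$ is a ql restriction of $F_\infty$, hence $\mu(F_\infty)\ge \limsup_n\mu(F_n)$. That is \emph{upper} semicontinuity, not lower. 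On a compact space an USC function attains its maximum, not its minimum, so pointwise positivity of $\mu$ does not give $\inf_{\FF_K}\mu>0$ this way. For the argument to run you need the opposite implication: starting from a fixed ql restriction $(X'_\infty,X_\infty,Y_\infty)$ of the limit $F_\infty$, deform it to ql restrictions of nearby $F_n$ with nearly the same modulus (this is the direction where a Hurwitz argument for univalence of $\iota_n$ on a fixed compact subset of $X'_\infty$ is actually usable). Moreover, your interim claim that $X_\infty\Subset Y_\infty$ already presupposes that $\mu(F_n)$ does not tend to $0$, which is what you are trying to prove.

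The second, deeper problem is that precompactness of $\FF_K$ with a non-degenerate $\psi$-ql limit is precisely as delicate as the lemma itself and is not established. Your item~(iii) is the crux: since $\iota_n\bigl(U_n\setminus\filled'_{F_n}\bigr)$ merely avoids $\filled_{F_n}$, in the limit new $\iota_\infty$-preimages of $\partial\filled_{F_\infty}$ can accumulate on $\filled'_{F_\infty}$, in which case $\iota_\infty$ is not injective on \emph{any} neighborhood of $\filled'_{F_\infty}$ and $F_\infty$ is not a $\psi$-ql map. Hurwitz controls preimages of a fixed point under locally uniform convergence of nonconstant maps; it does not rule out preimages of a \emph{moving continuum} $\filled_{F_n}$ appearing from arbitrarily far away and landing on $\partial\filled_{F_\infty}$. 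Ruling this out amounts to a uniform lower bound on the modulus of the neighborhood of $\filled'_{F_n}$ on which $\iota_n$ is injective, which is exactly the content of Kahn's geometric estimate; invoking compactness to obtain it makes the argument circular. So either you reproduce Kahn's injectivity estimate (at which point the compactness wrapper is superfluous) or the proposal, as it stands, does not close.
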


\begin{rem}
\label{rem:lem:from psi-ql to ql}  Assume that $\psi$-ql map $F\colon U\rightrightarrows V$ is obtained from a ql map $g\colon A\to B$ using finitely many $\psi^\bullet$-renormalizations followed by the unbalanced renormalization from~\S\ref{sss: psi^b to psi}. As in~\S\ref{sss:SupChainRule:iterations},  let $\gamma$ be the core hyperbolic geodesic of the annulus $V\setminus \filled_F$. In Lemma~\ref{lem:from psi-ql to ql} we can assume (by replacing $X,Y$ with $ f_X^{-1}(X),X$) that the ql restriction $f_X\colon X\to Y$ is within the subdisk $V_\gamma\Subset V$ bounded by $\gamma$. It is shown in~\S\ref{sss:SupChainRule:iterations} that $V_\gamma\setminus \filled_F$ (and hence $V_\gamma$) descents univalently to the dynamical plane of $g$ via the partial covering~\eqref{eq:part_covering}. Therefore, the ql map $f_X\colon X\to Y$ descents univalently to the dynamical plane of $g$.
\end{rem}

\noindent It is shown in the proof of {\cite[Lemma 2.4]{K}} that $\iota$ is an embedding in a neighborhood of $\filled_F$; this argument is applicable to $\psi^\bullet$ maps:

\begin{lem}
\label{lem:compac:psi bullet} There is a positive function $\mu_\bullet(K)$ with the following property.  If $F$ is a $\psi_\bullet$-ql map with $\Width_\bullet(F)\le K$, then $\bush_F$ has a neighborhood $\Omega$ such that $\iota$ is injective on $\Omega$ and 
\[ \mod \left(\Omega\setminus \bush_F \right) \ge \mu_\bullet(K).\]
\end{lem}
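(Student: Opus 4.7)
The plan is to mimic Kahn's proof of~\cite[Lemma 2.4]{K}, substituting the bush $\bush_F$ for the filled Julia set $\filled_F$ throughout. Two structural inputs from~\S\ref{sss:qlb:Defn} make this substitution legitimate: (a) $\bush_F$ is $\iota$-proper (condition~\ref{dfn:psib-ql:3}), so $\iota^{-1}(\bush_F)=\bush_F$; and (b) $\iota$ is already a conformal isomorphism on \emph{some} neighborhood $X'\supset \bush_F$ onto a neighborhood $X\supset \bush_F$ (condition~\ref{dfn:psib-ql:4}). Thus the qualitative conclusion — that $\iota$ is injective on \emph{some} neighborhood of $\bush_F$ — is already built into the definition; the only content of the lemma is that the size of this neighborhood is controlled by $K$ alone.

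First I would normalize. Pick $z_0\in\bush_F$ and two further marked points on $\partial V$ and on $\bush_F$, and apply a M\"obius change of coordinates so that the triple is placed at three fixed points of $\wC$. Under the hypothesis $\Width_\bullet(F)\le K$, i.e.\ $\mod(V\setminus \bush_F)\ge 1/K$, the Gr\"otzsch inequality produces a round annulus $A\subset V\setminus\bush_F$ with inner radius $\ge r_1(K)>0$ and outer radius $\le r_2(K)<\infty$ separating $\bush_F$ from $\partial V$. In particular both $V$ and $\bush_F$ (as a compact set) lie in a fixed compact region of $\wC$, and $\bush_F$ has diameter bounded in terms of $K$. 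The covering-annulus construction of $U$ and the immersion $\iota\colon U\to V$ are then also confined to a controlled region of $\wC$ on a definite neighborhood of $\bush_F$.

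Next I would run a normal-families argument. Assume for contradiction that no $\mu_\bullet(K)>0$ works. Then there exists a sequence of normalized $\psi^\bullet$-ql maps $F_n=(f_n,\iota_n)\colon U_n\rightrightarrows V_n$ with $\Width_\bullet(F_n)\le K$ such that the largest neighborhood $\Omega_n$ of $\bush_{F_n}$ on which $\iota_n$ is injective satisfies $\mod(\Omega_n\setminus\bush_{F_n})\to 0$. Equivalently there are points $x_n\neq y_n$ in $U_n$ with $\iota_n(x_n)=\iota_n(y_n)$ and $\dist(x_n,\bush_{F_n}), \dist(y_n,\bush_{F_n})\to 0$. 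Using the normalization and the uniform geometric control from Step~1, extract a subsequential Carath\'eodory/Hausdorff limit $F_\infty=(f_\infty,\iota_\infty)$ which is again a $\psi^\bullet$-ql map with $\Width_\bullet(F_\infty)\le K$, and a pair of accumulation points $x_\infty,y_\infty\in \bush_{F_\infty}$ with $\iota_\infty(x_\infty)=\iota_\infty(y_\infty)$. Applying Hurwitz to $\iota_n-\iota_n(\cdot)$ near $\bush_{F_\infty}$ shows $\iota_\infty$ is non-injective on every neighborhood of $\bush_{F_\infty}$, contradicting~\ref{dfn:psib-ql:4} for $F_\infty$.

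The main obstacle is the compactness step: one must verify that a suitable limit of $\psi^\bullet$-ql maps with bounded $\Width_\bullet$ is itself a $\psi^\bullet$-ql map, i.e.\ that the limiting $f_\infty$ is a degree-$2$ branched covering with connected Julia set on the limiting ql restriction and that $\bush_{F_\infty}$ really is the bush of that ql restriction (not a degenerate collapse). This is handled exactly as in the $\psi$-ql case of~\cite[Lemma 2.4]{K}: the width bound prevents $\bush_{F_n}$ from degenerating to a point, and the degree-$2$ structure on the covering-annulus description of $U_n$ is preserved under the limit because the combinatorial type is fixed by the inclusion $\bush_F\subset f^{-1}(\bush_F)$. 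Once compactness is in place, the contradiction above yields $\mu_\bullet(K)>0$ depending only on $K$.
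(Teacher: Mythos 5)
Your strategy is to run a contradiction/normal-families argument: assume the injective neighborhood degenerates along a sequence $F_n$, pass to a limit $F_\infty$, and contradict property~\ref{dfn:psib-ql:4} for $F_\infty$. The difficulty is that this is circular at precisely the step you flag as the ``main obstacle.'' To contradict~\ref{dfn:psib-ql:4} for $F_\infty$ you must first know that $F_\infty$ \emph{is} a $\psi^\bullet$-ql map, and property~\ref{dfn:psib-ql:4} is exactly the requirement that $\iota_\infty$ be injective on \emph{some} neighbourhood of $\bush_{F_\infty}$. By construction your sequence was chosen so that the injective neighbourhoods $X'_n$ shrink onto $\bush_{F_n}$; nothing in the hypotheses or in the Carath\'eodory/Hausdorff convergence hands you an injective neighbourhood for $\iota_\infty$. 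In your discussion of why the limit should remain $\psi^\bullet$-ql you address only $f_\infty$ (``degree-$2$ structure \ldots\ preserved'') and the non-collapse of $\bush_{F_n}$; the genuinely problematic item, injectivity of $\iota_\infty$ near $\bush_{F_\infty}$, is never addressed. Thus the contradiction you derive contradicts nothing: the natural conclusion of your argument is simply that the limit fails to be $\psi^\bullet$-ql, which is not impossible a priori.

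This is not a presentational issue but a structural one: the compactness statement you are invoking (``a limit of $\psi^\bullet$-ql maps with $\Width_\bullet\le K$ is again $\psi^\bullet$-ql'') is essentially equivalent to Lemma~\ref{lem:compac:psi bullet} itself. The paper, by contrast, indicates a direct argument: ``It is shown in the proof of~\cite[Lemma 2.4]{K} that $\iota$ is an embedding in a neighbourhood of $\filled_F$; this argument is applicable to $\psi^\bullet$ maps.'' The content there is a quantitative, non-compactness estimate: one uses the definite annulus $\mod(V\setminus\bush_F)\ge 1/K$, the $\iota$-properness $\iota^{-1}(\bush_F)=\bush_F$, and the compatibility $f\circ(\iota|X')^{-1}$ ql to propagate the local injectivity of $\iota$ near $\bush_F$ to a collar of definite modulus, not to extract a convergent subsequence. (A smaller, repairable point: your assertion that degeneration of $\mod(\Omega_n\setminus\bush_{F_n})$ is \emph{equivalent} to the existence of colliding pairs $x_n\neq y_n$ with $\dist(x_n,\bush_{F_n}),\dist(y_n,\bush_{F_n})\to 0$ is not immediate as stated; it holds once one restricts to exhausting collar-type neighbourhoods, but ``the largest neighborhood'' on which $\iota_n$ is injective need not be well-defined and, even so, the collision forced by maximality could sit far from $\bush_{F_n}$ in an awkwardly shaped domain.) To close the gap you would need to either reproduce the direct collar/extremal-length argument from~\cite{K} adapted to $\bush_F$, or prove the compactness of $\psi^\bullet$-ql maps with bounded $\Width_\bullet$ as a separate lemma — and the latter proof would in effect contain the former.
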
\qed

\subsubsection{WADs of $F$} \label{sss:WADs} Assume that $\bbush^{[n]}$ is well-defined, i.e., $F$ is $n+1$ renormalizable. Let us consider the canonical WAD of $U^k\setminus \bbush^{[n],(m)}$, $k\ge m$:
\[ \AA^{(m),k}\equiv \AA^{[n],(m),k}\coloneqq \WAD\left(U^k\setminus \bbush^{[n],(m)} \right).\]

We say (compare with~\S\ref{sss:intro:PrimPullOff}) that an arc $\alpha\in \AA^{[n],(m),k}$ is 
\begin{itemize}
\item \emph{vertical} if it connects the outer boundary $\partial^\out \left(U^k\setminus \bbush^{[n],(m)}\right)\coloneqq \partial U^k$ of $U^k\setminus \bbush^{[n],(m)}$ to one of its inner boundary components;
\item \emph{horizontal} if it connects two inner (i.e., non-outer) boundary components of $U^k\setminus \bbush^{[n],(m)}$. 
\end{itemize}

We set (see~\S\ref{sss:sub-diagram}) 
\begin{itemize}
\item $\AA^{(m),k}_\ver$ to be \emph{vertical part} of $\AA^{(m),k}$: it consists of arcs connecting $\partial U^k$ and one of the components of $\partial \bbush^{[n],(m)}$;
\item $\AA^{(m),k}_\hor$ to be \emph{horizontal part} of $\AA^{(m),k}$: it consists of arcs connecting components of $\partial \bbush^{[n],(m)}$;
%\item $\AA^{k}=\AA^{(0),k}, \sp \AA_\hor^{k}=\AA_\hor^{(0),k}, \sp \AA^{k}_\ver=\AA^{(0),k}_\ver$;
\item $\AA_i^{(m),k}$ to be the \emph{local WAD} for $\bush^{[n],(m),k}_i$, see~\S\ref{sss:LocalWAD}: $\AA_i^{(m),k}$ consisting of arcs adjacent to $\bush^{[n],(m),k}_i$ such that the weights of self-arcs adjacent to $\bush^{[n],(m),k}_i$ are doubled;
\item $\AA_{i,\ver}^{(m),k},\AA_{i,\hor}^{(m),k} $ to be the local parts of $\AA^{(m),k}_\ver,\ \AA^{(m),k}_\hor$.
%\item $\AA^{k}_i=\AA^{(0),k}_i, \sp \AA^{k}_{i,\ver}=\AA^{(0),k}_{i,\ver}\sp \AA_{i,\hor}^{k}=\AA_{i,\hor}^{(0),k},$
%\item $\AA_i=\AA^{(0),0}_i, \sp \AA_{i,\ver}=\AA^{(0),0}_{i,\ver}\sp \AA_{i,\hor}=\AA_{i,\hor}^{(0),0}.$
\end{itemize}

We will usually suppress upper zero induces:
\[\AA_i=\AA^{(0),0}_i, \hspace{0.4cm}\AA_i^{(m)}=\AA^{(m),0}_i, \hspace{0.4cm}\AA_i^{k}=\AA^{(0),k}_i, \dots \]

The following lemma provides a reverse to~\eqref{eq:width:sup chain rule} estimate. 

\begin{lem}
\label{lem:reverse to sup chain rule}
Assume that in the dynamical plane of $F=\RR^{n_0 \bullet }G\colon U\rightrightarrows V$, there is a non-trivial horizontal lamination of curves $\LL\subset V\setminus \bbush^{[n]}$ emerging from $\bush^{[n]}_0$ and landing at $\bbush^{[n]}$. Then 
\[\Width\left( \RR^{n_0+n \bullet }G \right)\ge \Width(\LL).\]
\end{lem}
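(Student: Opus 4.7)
The plan is to combine the Sup-Chain Rule from~\S\ref{sss:sup chain rule} with a direct lifting of the lamination $\LL$ through the covering map that defines the $\psi^\bullet$-renormalization. Writing $F'\coloneqq\RR^{n\bullet}F$ and $G'\coloneqq\RR^{n_0+n\bullet}G$, the Sup-Chain Rule~\eqref{eq:sup chain rule} yields an inclusion $F'\subset G'$, that is, a holomorphic embedding $V_{F'}\hookrightarrow V_{G'}$ which is the identity on the common central bush $\bush^{[n]}_0$. Applying monotonicity of width under conformal embeddings (Lemma~\ref{generaltransform rules}(ii)) in the complement of $\bush^{[n]}_0$ gives $\Width_\bullet(G')\ge\Width_\bullet(F')$, so it suffices to prove $\Width_\bullet(F')\ge\Width(\LL)$.

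To establish the latter, I will work inside the covering that defines $F'$. By the construction of~\S\ref{sss:psi b renorm}, the annulus $V_{F'}\setminus\bush^{[n]}_0$ is the covering annulus
\[
\pi\colon V_{F'}\setminus\bush^{[n]}_0\longrightarrow V\setminus\bbush^{[n]}
\]
corresponding to the cyclic subgroup of $\pi_1(V\setminus\bbush^{[n]})$ generated by a loop around $\partial\bush^{[n]}_0$; its inner boundary maps homeomorphically onto $\partial\bush^{[n]}_0$ and is glued back to form $V_{F'}$, while the outer boundary becomes the ideal boundary $\partial V_{F'}$. For every curve $\gamma\in\LL$ I pick the unique $\pi$-lift $\tilde\gamma$ whose initial endpoint lies on the glued inner boundary. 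Since $\LL$ is a genuine lamination and $\pi$ is unbranched, the family $\tilde\LL=\{\tilde\gamma\}$ is pairwise disjoint; moreover, the terminal endpoint of $\gamma$ lies on a component of $\partial\bbush^{[n]}$ not in the deck orbit of the glued lift of $\partial\bush^{[n]}_0$ (this also holds when $\gamma$ is a self-arc at $\bush^{[n]}_0$, since its two endpoints then correspond to two distinct preimages of $\partial\bush^{[n]}_0$ in the cover), so each $\tilde\gamma$ must exit to $\partial V_{F'}$.

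Let $R\subset V\setminus\bbush^{[n]}$ be the conformal rectangle whose vertical foliation realizes $\LL$, so that $\Width(R)=\Width(\LL)$. Being simply connected, $R$ lifts via $\pi$ to a conformal rectangle $\tilde R\subset V_{F'}\setminus\bush^{[n]}_0$ of the same width, with horizontal sides lying on $\partial\bush^{[n]}_0$ and on $\partial V_{F'}$ respectively. Its vertical foliation then belongs to $\Fam(V_{F'},\bush^{[n]}_0)$, so that
\[
\Width_\bullet(F')=\Width(V_{F'}\setminus\bush^{[n]}_0)\ \ge\ \Width(\tilde R)=\Width(\LL),
\]
which combined with the first step yields the lemma. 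The only delicate point is the correct identification of the lift of $\partial\bush^{[n]}_0$ that is glued in the construction of $V_{F'}$, and the resulting verification that $\tilde\gamma$ reaches $\partial V_{F'}$ when $\gamma$ is a self-arc — this is the one combinatorial subtlety, handled above via the deck-orbit observation.
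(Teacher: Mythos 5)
The proposal contains a fatal error in its first step: you have the Sup-Chain Rule inequality reversed. The inclusion $F'=\RR^{n\bullet}\circ\RR^{n_0\bullet}(G)\subset G'=\RR^{n_0+n\bullet}(G)$ of $\psi^\bullet$-ql maps fixes the common bush, so $V_{F'}\setminus\bush$ embeds into $V_{G'}\setminus\bush$ with the same inner boundary; this \emph{increases} the modulus and therefore \emph{decreases} the width. This gives $\Width_\bullet(G')\le\Width_\bullet(F')$, which is exactly what~\eqref{eq:width:sup chain rule} records, and is the opposite of what you assert. Lemma~\ref{generaltransform rules}(ii) does not rescue the claimed direction either: under $V_{F'}\hookrightarrow V_{G'}$ the outer endpoint of a radial proper arc of $V_{F'}\setminus\bush$ lands in the interior of $V_{G'}\setminus\bush$, so the image is not a proper arc of $V_{G'}\setminus\bush$ and the lemma's hypothesis fails. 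Consequently the reduction ``it suffices to prove $\Width_\bullet(F')\ge\Width(\LL)$'' is invalid: establishing a lower bound on the \emph{larger} quantity $\Width_\bullet(\RR^{n\bullet}F)$, as your Step~2 does, says nothing about the smaller quantity $\Width_\bullet(\RR^{n_0+n\bullet}G)$ that the lemma is actually about.

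The paper instead proceeds by a descent-then-ascent through the dynamical plane of the original map $G$. First, $\LL$ is pulled back from the dynamical plane of $F=\RR^{n_0\bullet}G$ to the dynamical plane of $G$: by the definition of $\psi^\bullet$-ql maps in~\S\ref{sss:qlb:Defn}, curves emerging from $\bush_F$ into $V_F\setminus\bbush^{[n]}_F$ lift univalently (via the immersion) to a horizontal lamination $\widetilde\LL$ in $V_G\setminus\bbush^{[n_0+n]}_G$ connecting $\bush^{[n_0+n]}_{G,0}$ to $\bbush^{[n_0+n]}_G$, with $\Width(\widetilde\LL)\ge\Width(\LL)$. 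Only then is $\widetilde\LL$ lifted to a vertical lamination in $V_{G'}$; this second step is precisely the univalent lift into the covering annulus that your Step~2 describes, performed at the level of $G$ rather than $F$, and it yields $\Width_\bullet(G')\ge\Width(\widetilde\LL)\ge\Width(\LL)$ directly. The descent to $G$ is the missing bridge between the two renormalizations, and it replaces the incorrect Sup-Chain comparison.
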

\begin{proof}
By definition~\S\ref{sss:qlb:Defn}, the $\LL$ lifts univalently to a horizontal lamination $\widetilde \LL$ in the dynamical plane of $G$ emerging from $\bush^{n_0+n}_{G, 0}$ and landing at $\bbush^{[n_0+n]}_{G}$. After that $\widetilde \LL$ lifts univalently to a vertical lamination in the dynamical plane of $\RR^{n_0+n \bullet }G$. We obtain that $\Width\left( \RR^{n_0+n \bullet }G \right)\ge \Width(\widetilde \LL)\ge \Width(\LL)$.
\end{proof}

\subsubsection{WADs and $\Width_\bullet(F_{n,i})$} \label{sss:Width:F_ni} It follows from Theorem~\ref{thin-thick for S} that
\begin{equation}
\label{eq:Width is Loc WAD}
\Width_\bullet(F_{n,i}) = \Width(\AA_i)+O_{p_n}(1).
\end{equation}

As with $\psi$-ql renormalization (compare with~\cite[Theorem 9.3]{McM1}), we have
\begin{equation}
\label{eq:LocWidth is compar}
\frac 12\Width_\bullet(F_{n, 0}) \le \Width_\bullet(F_{n, i}) \le \Width_\bullet(F_{n, 0})\hspace{0.7cm} \text{ for all }\ i.
\end{equation}
Indeed, let $\gamma_i \subset V\setminus \bbush^{[n]}$ be the peripheral hyperbolic geodesic around $\bush^{[n]}_{i}$. Then the hyperoblic length of $|\gamma_i|_{V\setminus \bbush^{[n]}}$ is proportional to $\Width_\bullet(F_{n,i})$. Let $\gamma^{1}_i$ be the lift of $\gamma_i$ under $f\colon U^{1}\setminus \bbush^{[n],(1)}\to V\setminus \bbush^{[n]}$ so that $\gamma^{1}_i$ is the peripheral hyperbolic geodesic around $\bush^{[n],(1)}_{i-1}$, where the subscript is $\mod p_n$. Counting the degree of $f\colon \gamma^1_i\to \gamma_i$, we obtain 
\begin{itemize}
\item $|\gamma^1_i|_{U^{1}\setminus \bbush^{[n],(1)}}=|\gamma_i|_{V\setminus \bbush^{[n]}}$ if $i\not=1$; and
\item $\frac 12|\gamma^1_1|_{U^{1}\setminus \bbush^{[n],(1)}}=|\gamma_1|_{V\setminus \bbush^{[n]}}$.
\end{itemize}
Finally, $|\gamma^1_i|_{U^{1}\setminus \bbush^{[n],(1)}}\ge |\gamma_{i-1}|_{V\setminus \bbush^{[n]}}$; this implies~\eqref{eq:LocWidth is compar}.

\subsubsection{Covering: from $U^{k}\setminus \bbush^{(m)}$ to $U^{k+s}\setminus \bbush^{(m+s)}$}\label{sss:WAD:covering:psi maps}
It follows from~\S\ref{sss:WAD:cov} that the WADs change naturally under the covering:
\begin{equation}
\label{eq:WAD:under coverin} \AA^{(m+s),k+s}_\hor =(f^s)^* \AA^{(m),k}_\hor, \hspace{0.7cm} \AA^{(m+s),k+s}_{i-s} =(f^s)^* \AA^{(m),k}_i,
\end{equation}
and similar for other WAD such as $\AA^{(m),k}, \AA^{(m),k}_\ver,\dots$.

\subsubsection{Restriction by $\iota^s$: from $U^{k}\setminus \bbush^{(m)}$ to $U^{k+s}\setminus \bbush^{(m)}$}\label{sss:WAD:imm} Assume that $m \le k$. By Lemma~\ref{lem:iota proper:preim}, the set $\bbush^{[n],(m)}\subset U^k$ is $\iota$-proper. Let $\gamma$ be a proper curve in $U^{k}\setminus \bbush^{[n],(m)}$ emerging from $\bush^{[n],(m)}_a$. Applying $\iota^{-s}$ along $\gamma$, we construct, see~\eqref{eq:iota to s} below, its \emph{restriction} $(\iota^{*})^s[\gamma]$ which is a proper curve in $U^{k+s}\setminus \bbush^{[n],(m)}$. Moreover, 
\begin{itemize}
\item if $\gamma$ is vertical, then so is  $(\iota^{*})^s[\gamma]$,
\item if $\gamma$ is horizontal, then $(\iota^{*})^s[\gamma]$ is either horizontal or vertical. 
\end{itemize}
If $(\iota^{*})^s[\gamma]$ is horizontal, then we will also call $(\iota^{*})^s[\gamma]$ the \emph{lift} of $\gamma$ under $\iota^s$.

 More precisely, write $\gamma\colon (0,1)\to U^k\setminus \bbush^{[n],(m)}$. For every $s>0$, there is a $t_s\in (0,1]$ such that $\iota^{-s}$ extends along $\gamma\mid (0,t_s)$ and the resulting curve 
 \begin{equation}\label{eq:iota to s}
 (\iota^{*})^s[\gamma]\coloneqq \iota^{-s}\circ \gamma\colon  (0,t_s) \to U^{k+s}\end{equation}
is a proper curve in $U^{k+s}\setminus \bbush^{[n],(m)}$ (because $\bbush^{[n],(m)}\subset  U^k$ is $\iota$-proper). Moreover,
\begin{itemize}
\item if $t_m=1$, then $(\iota^{*})^n\gamma$ is vertical if and only if $\gamma$ is vertical,
\item  if $t_m<1$, then $(\iota^{*})^n\gamma$ is vertical. 
\end{itemize}

The following lemma implies that curves in a horizontal rectangle that restrict under $(\iota^s)^*$ to vertical curves form buffers of the rectangle.

\begin{lem}\label{lem:restr:buffers}
Assume that two disjoint horizontal paths $\gamma_1,\gamma_2\subset U^{k}\setminus \bbush^{(m)}$ representing the same arc  $\alpha=[\gamma_1]=[\gamma_2]$ lift under $(\iota^s)^*$ to horizontal paths in $U^{k+s}\setminus \bbush^{(m)}$. Let $R\subset U^{k}\setminus \bbush^{(m)}$ be the proper rectangle between $\gamma_1$ and $\gamma_2$ such that all vertical curves in $R$ also represent $\alpha$. Then all curves in $\Fam^\full(R)$ lift under $(\iota^s)^*$ to homotopic horizontal curves in $U^{k+s}\setminus \bbush^{(m)}$. 
\end{lem}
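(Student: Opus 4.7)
The plan is to lift the entire rectangle $R$ under $\iota^s$ to a rectangle $\tilde R\subset U^{k+s}$ so that $\iota^s\colon \tilde R\to R$ is a conformal isomorphism. Granted this, every $\delta\in\Fam^\full(R)$ pulls back under $(\iota^s)^*$ to a curve in $\Fam^\full(\tilde R)$; such a pull-back is a proper horizontal curve in $U^{k+s}\setminus\bbush^{(m)}$ connecting the same two lifted components of $\bbush^{(m)}$, and any two such pull-backs are homotopic rel endpoints through the vertical foliation of $\tilde R$.

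First I would lift the boundary $\partial R$. By Lemma~\ref{lem:iota proper:preim}, $\bbush^{(m)}$ is $\iota$-proper, so $\iota^s$ restricts to a biholomorphism in a neighborhood of each component of $\bbush^{(m)}$; this lifts the horizontal sides $\partial^h R\subset \bbush^{(m)}$ canonically to arcs on (lifted) components of $\bbush^{(m)}$ inside $U^{k+s}$. The vertical sides $\gamma_1,\gamma_2$ lift horizontally to $\tilde\gamma_1,\tilde\gamma_2$ by hypothesis. Matching up endpoints at $\partial\bbush^{(m)}$ assembles these four pieces into a Jordan curve $\Gamma\subset U^{k+s}$ projecting homeomorphically onto $\partial R$ under $\iota^s$.

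Next I would extend the lift over $\inter R$. Since $\iota^s$ is a holomorphic immersion (hence a local biholomorphism of Riemann surfaces) and $R$ is simply connected, the monodromy theorem shows that analytic continuation of local inverses of $\iota^s$ based at a point of $\tilde\gamma_1$ defines a single-valued holomorphic map $\sigma\colon R_0\to U^{k+s}$ on the open set $R_0\subset R$ of points reachable without the continuation escaping $U^{k+s}$. Because $\iota^s\circ\sigma=\mathrm{id}$ on $R_0$, the map $\sigma$ is automatically injective, so $\tilde R\coloneqq \sigma(R_0)$ is an embedded open Jordan region in $U^{k+s}$ on which $\iota^s$ is a conformal isomorphism onto $R_0$, and the topological frontier of $\tilde R$ inside $U^{k+s}$ contains $\Gamma$.

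The main obstacle is showing $R_0=R$, i.e.\ ruling out escape of $\sigma$ to $\partial U^{k+s}$ over an interior point of $R$. The trapping mechanism is the closed Jordan curve $\Gamma\subset U^{k+s}$ already produced: if a sequence $p_n\to p\in \inter R$ with $p_n\in R_0$ had $\sigma(p_n)$ escaping $U^{k+s}$, then by connectedness the continuum $\overline{\sigma(R_0)}$ would have frontier extending beyond $\Gamma$, so the holomorphic immersion $\iota^s\colon \tilde R\to R_0\subset R$ would extend continuously to a non-disk Riemann surface immersing onto the disk $R$; but then the boundary degree computed on $\Gamma\to \partial R$ (which is $1$) would disagree with the interior degree, a contradiction. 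Hence $\tilde R$ is precisely the Jordan disk enclosed by $\Gamma$ in $U^{k+s}$, $R_0=R$, and $\iota^s\colon\tilde R\to R$ is a biholomorphism, from which the claim about $\Fam^\full(R)$ follows at once.
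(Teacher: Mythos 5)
Your argument is correct and takes essentially the same approach as the paper's (very terse) proof, which simply observes that a curve of $\Fam^\full(R)$ can only become vertical under $(\iota^s)^*$ if its continuation reaches $\partial U^{k+s}$, and that the lifted boundary of $R$ (your $\Gamma$) traps it. You spell out the trap by assembling the Jordan curve $\Gamma\subset U^{k+s}$ from the lifts of $\gamma_1,\gamma_2$ and of $\partial^h R$, then ruling out escape; the final monodromy/escape step would read most crisply if you noted directly that $\Gamma$ bounds a Jordan disk $D\subset U^{k+s}$, that $\iota^s|_\Gamma$ traverses $\partial R$ once, and hence that $\iota^s\colon D\to R$ is a degree-one proper map (argument principle), i.e.\ a biholomorphism, which gives the desired lift of $R$ without any analytic-continuation bookkeeping.
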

\begin{proof}
The curve $\gamma\in \Fam^\full(R)$ becomes vertical under the restriction by $(\iota^s)^*$ if and only if $\gamma(t)$ hits $\iota^s\big(\partial U^{k+s}\big)$. This is impossible because $\gamma_1(t),\gamma_2(t)$ do not hit $\iota^s\big(\partial U^{k+s}\big)$. 
\end{proof}

\subsubsection{Monotonicity of the $\AA^{k}_\hor$} \label{sss:monot:AA^k} Since $\iota\colon U^{k+1}\to U^k$ is an embedding in a neighborhood of $\filled_F$, we \emph{identify up to homotopy} $U^k\setminus \bbush^{[n],(m)}$ and $U^s\setminus \bbush^{[n],(m)}$ for all $k,s$. In particular, horizontal arcs $\alpha$ in $U^k\setminus \bbush^{[n],(m)}$ are naturally viewed as horizontal arcs in $U^s\setminus \bbush^{[n],(m)}$ by realizing the $\alpha$ in a small neighborhood of $\filled_F$ where $\iota^{s-k}$ is an embedding.

It follows essentially from Lemma~\ref{tranform rule for rectangles} (by lifting $\iota$ to the universal coverings, see~\cite[\S 3.5]{K} for reference) that for $k\ge m$
\begin{equation}
\label{eq:sss:monot:AA^k}
\AA^{[n],(m),k}_\hor \ge  \AA^{[n],(m),k+1}_\hor\hspace{0.5cm} \text{ and } \hspace{0.5cm}  \AA^{[n],(m),k}_{i,\hor} \ge  \AA_{i,\hor}^{[n],(m),k+1}.
\end{equation}

\subsubsection{Domination: from $U^{k}\setminus \bbush^{(m)}$ to $U^{k}\setminus \bbush^{(m+1)}$ (following \cite[{\S3.6}]{K})} \label{sss:domination} 
Assume that $\gamma\subset U^{k}\setminus \bbush^{[n],(m)}$ is a proper horizontal curve. Then it has the decomposition 
\begin{equation}
\label{eq:sss:domination}
\gamma = \ell_0\#\gamma_1\# \ell_1 \#\gamma_2\# \dots \#\gamma_s\#\ell_s
\end{equation}
  (compare with~\S\ref{sss:periodic arcs}) such that
 \begin{itemize}
\item $\gamma_j\subset U^{k}\setminus \bbush^{[n],(m+1)}$ are proper horizontal curves; and
\item every component of $\ell_i\setminus \bbush^{[n],(m+1)}$ is trivial in $U^k\setminus \bbush^{[n],(m+1)}$  (with respect to a proper homotopy, see~\S\ref{sss:AD}).
\end{itemize} 
We call $s$ the \emph{expansivity number} of $\gamma$ in rel $\bbush^{(m+1)}$.

It follows from Decomposition~\eqref{eq:sss:domination} that there is a combinatorial constant $C\equiv C_{p_n,m}$ (essentially, the maximal number of possible arcs rel $\bbush^{[n],(m+1)}$) such that for all $M>1$ 
\begin{equation}
\label{eq:prop:dominat}
\AA^{[n],(m),k}_\hor- M C \sp\sp \text{ is \emph{dominated} by }\sp\sp \AA^{[n],(m+1),k}_\hor-M
\end{equation}in the following sense, see~\S\ref{sss:sub-diagram} for notation. For every arc $\alpha$ in $\AA^{[n],(m),k}_\hor- M C$, there is a vertical sublamination 
\[\LL_\alpha\subset \Fam(e)\equiv \bPi^\can (\alpha)\sp\sp\text{ with }\sp\sp \Width(\LL_\alpha) \ge \Width(\alpha) -  M C\]
such that Decomposition~\ref{eq:sss:domination} of every $\gamma\in \LL_\alpha$ has the additional property that 
\begin{equation}
\label{eq:dfn:domin:gamma_i}
\gamma_i\in \Fam^\full \left(\AA^{[n],(m+1),k}_\hor-M\right).
\end{equation}
The lamination $\LL_a$ is constructed by removing all $\gamma$ from $\Fam(\alpha)$ that do not satisfy~\eqref{eq:dfn:domin:gamma_i} -- the width of removed curves is bounded by  $ M C$.

\subsubsection{Almost periodic rectangles} \label{sss:PeriodRect} Consider a $\psi^\bullet$-ql map $F\colon U \rightrightarrows V$ as in~\eqref{eq:dfn:psib-ql}. We say that a proper rectangle $R$ in $V\setminus \bbush^{[1]}$ is almost periodic if most of the width of $R$ overflows its iterative lift; more precisely:

\begin{defn}
\label{dfn:s delta inv rect} 
For a $\psi^\bullet$-ql map $F\colon U \rightrightarrows V$, a proper rectangle $R\subset V\setminus \bbush^{[1]}$ connecting $\bush^{[1]}_a,\bush^{[1]}_{a+1}$ is called \emph{$\delta$-almost periodic} if $R$ represents a genuine periodic arc (of some AD, see \S\ref{sss:periodic arcs}) and $R$ has a vertical sublamination $\FamG\subset \Fam(R)$ with $\Width(\FamG)\ge (1-\delta)\Width(R)$ such that Conditions~\ref{c1:dfn:s delta inv rect}, \ref{c3:dfn:s delta inv rect} stated below hold for all $s\le 10 \overline p$, where $\overline p$ is the combinatorial bound~\S\ref{sss:infin renorm}.
\begin{enumerate}[label=\text{(\Roman*)},font=\normalfont,leftmargin=*]
\item \label{c1:dfn:s delta inv rect} Under the immersion, $\FamG$ lifts to the lamination \[\FamG^s\coloneqq (\iota^{sp_1})^*(\FamG)\subset U^{sp_1}\setminus \bbush^{[1]}\] still connecting $\bush^{[1]}_a,\bush^{[1]}_{a+1}$.
\end{enumerate}
Let $R^{(s)}\subset U^{(sp_1)}\setminus \bbush^{[1],(sp_1)}$ be the periodic lift of $R$ under $f^{s p_1}$; i.e., $R^{(s)}$ connects $\bush^{[1],(sp_1)}_a,\bush^{[1],(sp_1)}_{a+1}$.
\begin{enumerate}[label=\text{(\Roman*)},start=2,font=\normalfont,leftmargin=*]
\item \label{c3:dfn:s delta inv rect} The lamination $\FamG^s$ overflows $R^{(s)}$ as follows: every curve $\gamma$ in $\FamG^s$ is the concatenation $\gamma=\ell_a\#\gamma'\#\ell_{a+1}$ such that 
\begin{itemize}
\item $\gamma'\in \Fam^\full\big(R^{(s)}\big)$, see~\S\ref{ss:rectangles}; and
\item every component of $\ell_a\setminus \bush^{[1],(sp_1)}_a$ and every component of $\ell_{a+1}\setminus \bush^{[1],(sp_1)}_{a+1}$ is trivial in $U^{sp_1}\setminus \bbush^{[1],(sp_1)}$  with respect to a proper homotopy, see~\S\ref{sss:AD}.
\end{itemize}
\end{enumerate}
\end{defn}

By Lemma~\ref{lem:per arcs}, the first renormalization of $F$ is satellite and $\bush^{[1]}_a, \bush^{[1]}_{a+1}$ are neighboring bushes with respect to the cyclic order.

\begin{rem}
\label{rem:dfn:s delta inv rect} An almost periodic rectangle $R$ between little bushes $\bush^{[n]}_a,\bush^{[n]}_{a+1}$ is defined in the same way as in the case $n=1$. Such a rectangle $R$ can be lifted to the dynamical plane of $F_{n-1,c}$ (via the covering map representing the $\psi^\bullet$-renormalization, see~\S\ref{sss:psi b renorm}) and its lift will be an almost periodic rectangle between level $1$ little bushes as in Definition~\ref{dfn:s delta inv rect} . 
\end{rem}

\section{Pull-off for non-periodic rectangles}\label{s:Pull off for non-periodic rect} In this section we will establish the following theorem that refines a result from~\cite{K}: 
\begin{thm}
\label{thm:prim pull-off}
For every bound $\bar p$ on renormalization periods as in \S\ref{sss:infin renorm}, every small $\delta>0$, and every $n\gg_{\bar p, \delta} 1$, the following holds. 

Let $F=(f,\iota)\colon U\rightrightarrows V$ be a $\psi^\bullet$-ql map $n+1$ times renormalizable of type $\bar p$, and let $F_{n,i}=(f_{n,i},\iota_{n,i})\colon U_{n,i}\rightrightarrows V_{n,i}$ be its $n$th $\psi^\bullet$-renormalization, see~\S\ref{sss:psi b renorm}.  If 
\[ \Width_\bullet (F_{n,i}) =K\gg_{\bar  p ,\delta, n} 1,\]
then  
\begin{enumerate}[label=\text{(P)},font=\normalfont,leftmargin=*]
\item \label{case:P:thm:prim pull-off}either $\Width_\bullet(F)\ge 2K$;
\end{enumerate}
\begin{enumerate}[label=\text{(S)},font=\normalfont,leftmargin=*]
\item \label{case:S:thm:prim pull-off}or the $n$th renormalization of $F$ is satellite and the $(n-1)$-th $\psi^\bullet$-ql renormalization $F_{n-1}$ has a $\delta$-almost periodic rectangle $R$  with $\Width(R) \ge K/20$ (see Definition~\ref{dfn:s delta inv rect}) between two neighboring bushes in its satellite flower.
\end{enumerate}
\end{thm}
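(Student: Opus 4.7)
The plan is to run the WAD machinery of \S\ref{ss:WAD}--\S\ref{sss:WADs} on $V\setminus \bbush^{[n]}$ and quantitatively push horizontal weight across the immersion $\iota^{*}$ into the vertical channel, following the scheme of \cite{K} recalled in~\S\ref{sss:intro:PrimPullOff}. The only configurations that can obstruct such a push-off are the $\delta$-almost periodic rectangles of Definition~\ref{dfn:s delta inv rect}; by Lemma~\ref{lem:per arcs} these can live only inside satellite flowers of level $n-1$, and isolating one of substantial width is what produces case~\ref{case:S:thm:prim pull-off}. First I would split $\AA\equiv\AA^{[n]}=\AA_{\hor}+\AA_{\ver}$ and use~\eqref{eq:Width is Loc WAD}--\eqref{eq:LocWidth is compar} to make all local widths comparable to $K$. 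The monotonicity~\eqref{eq:sss:monot:AA^k} of the horizontal WAD under $(\iota^{*})^{k}$, combined with the finiteness of horizontal arc types rel $\bbush^{[n]}$, yields a $k_{0}=k_{0}(p_{n})$ for which the underlying arc diagram $A_{\hor}^{k}$ stabilizes; denote the limit by $\AA_{\hor}^{\infty}$. The deficit $\AA_{\hor}-\AA_{\hor}^{\infty}$ consists, by Lemma~\ref{lem:restr:buffers}, of buffer portions of horizontal rectangles that become vertical under $(\iota^{*})^{k}$, and so contributes directly to $\AA_{\ver}$ up to an $O_{p_{n}}(1)$ error.

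Next I apply Lemma~\ref{lem:Alignment} to align every $\alpha\in \AD[\AA_{\hor}^{\infty}]$ with a geodesic continuum in $\Hub_{F}\setminus\bbush^{[n]}$, and Lemma~\ref{lem:per arcs} to partition
\[\AD[\AA_{\hor}^{\infty}]\ =\ A^{\mathrm{per}}\sqcup A^{\mathrm{np}},\]
where $A^{\mathrm{per}}$ consists of genuine periodic arcs (necessarily inside satellite flowers of level $n-1$, connecting neighboring bushes), and $A^{\mathrm{np}}$ consists of arcs with $\ExpN(f^{p_{n}},\alpha)\ge 2$. For each $\alpha\in A^{\mathrm{np}}$, the decomposition~\eqref{eq:form for gamma} (upgraded by Lemma~\ref{lem:sss:in out:sat flower} whenever $\alpha$ lies outside of satellite flowers) supplies two disjoint iterated lifts of $\alpha$ that are simultaneously overflown by $\alpha$; the Gr\"otzsch inequality~\eqref{eq:Grot}, combined with the $\psi^{\bullet}$-covering structure (as in the local comparability of~\S\ref{sss:Width:F_ni}), then converts a definite fraction of $\Width^{\can}(\alpha)$ into local vertical weight at some $\bush^{[n]}_{j}$.

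For arcs in $A^{\mathrm{per}}$ I argue by dichotomy. If some $\alpha\in A^{\mathrm{per}}$ has $\Width^{\can}(\alpha)\ge K/20$ and its canonical rectangle $\bPi^{\can}_{\alpha}$ satisfies conditions~\ref{c1:dfn:s delta inv rect}--\ref{c3:dfn:s delta inv rect} of Definition~\ref{dfn:s delta inv rect} with parameter $\delta$, then Remark~\ref{rem:dfn:s delta inv rect} lifts $\bPi^{\can}_{\alpha}$ to a $\delta$-almost periodic rectangle in the dynamical plane of $F_{n-1}$, yielding case~\ref{case:S:thm:prim pull-off}. Otherwise every genuine periodic arc either has width below $K/20$ or has a $\delta$-portion of its canonical rectangle failing to overflow its $F^{p_{n}}$-lift, and that failing portion again escapes into vertical weight by Lemma~\ref{lem:restr:buffers}. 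Summing local contributions over the $p_{n}$-cycle of bushes, and using that vertical arcs are counted once (not doubled) in the local WADs,
\[\Width(\AA_{\ver})\ =\ \sum_{i}\Width(\AA_{i,\ver})\ \ge\ \tfrac{3}{4}\,p_{n}\,K\ -\ O_{p_{n}}(1)\ \ge\ 2K,\]
for $\delta$ small enough, $n$ large enough (so that $p_{n}\ge 2^{n}$ is comfortably large in bounded type), and $K\gg_{\bar p,\delta,n}1$. Since vertical arcs of $\AA^{[n]}$ overflow curves in $V\setminus\bush_{F}$, this yields $\Width_\bullet(F)\ge 2K$, i.e.~case~\ref{case:P:thm:prim pull-off}.

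The main obstacle is the quantitative bookkeeping in the conversion step: making precise, uniformly in the almost-periodicity defect $\delta$, how much horizontal weight genuinely escapes to the vertical channel through the $O(p_{n})$-long chain of $\iota^{*}$-restrictions, while absorbing the additive $O_{p_{n}}(1)$ losses from the thin-thick decomposition, from the buffer accounting of~\S\ref{sss:sub-diagram}, and from the domination of $\bbush^{[n],(m)}$ by $\bbush^{[n],(m+1)}$ recalled in~\S\ref{sss:domination}. This is where the hypothesis $K\gg_{\bar p,\delta,n}1$ is used, and where care is needed so that the ``loss constants'' from the Gr\"otzsch application do not deteriorate with $p_{n}$.
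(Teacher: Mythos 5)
Your overall scheme is reasonable and touches the right ingredients (monotonicity of $\AA^{k}_\hor$, stabilization and alignment with $\Hub_F$ via Lemma~\ref{lem:Alignment}, and the periodic/non-periodic dichotomy via Lemma~\ref{lem:per arcs}), but two central steps are asserted rather than proved, and these are exactly where the paper's argument differs and where the real work lies.

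First, the step ``the Gr\"otzsch inequality, combined with the $\psi^\bullet$-covering structure, converts a definite fraction of $\Width^\can(\alpha)$ into local vertical weight at some $\bush^{[n]}_j$'' for $\alpha\in A^{\mathrm{np}}$ is not established and is not how the mechanism actually works. A non-periodic horizontal arc, after restriction by $\iota^*$ or pullback by $f^*$, typically produces \emph{other horizontal arcs}, not vertical ones; no single application of Gr\"otzsch creates vertical weight. What the paper proves instead is an \emph{extremal/contradiction} argument: working under the negation of \ref{case:P:loc:thm:prim pull-off} (all local vertical widths $\Width(\AA^{q}_{i,\ver})\le \delta_1 K$), one takes the arc $e\in H$ of \emph{maximal} width, uses Lemma~\ref{lem:sss:in out:sat flower} together with Lemma~\ref{lem:restr:buffers} and the Series Law to show that if $e$ were not inside a satellite flower, its pullback under $f^{2p_n}$ would have strictly larger width --- contradicting maximality. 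This forces the wide arc to be genuine periodic and localized in a satellite flower, hence case~\ref{case:S:thm:prim pull-off}. Your partition-and-convert argument skips this, and I don't see how to make it rigorous without reproducing the extremal argument in disguise.

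Second, your quantitative conclusion for case~\ref{case:P:thm:prim pull-off} --- $\Width(\AA_\ver)=\sum_i \Width(\AA_{i,\ver})\ge \tfrac34 p_n K - O_{p_n}(1)\ge 2K$ --- is not a valid inference. The local vertical widths $\Width(\AA_{i,\ver})$ all live in the same surface $V\setminus\bbush^{[n]}$ and you cannot simply add them to bound $\Width_\bullet(F)$ from below; the canonical rectangles are disjoint but the total width of $V\setminus\bbush^{[n]}$ is controlled by the thin-thick decomposition, and the crucial propagation ``one $\kappa$ with $\Width(\AA^{q}_{\kappa,\ver})\ge\delta_1 K$ forces $\Width(V,\bush_j)\succeq K$ for \emph{all} $j$'' is nontrivial. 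In the paper this is precisely where the Covering Lemma of~\cite{covering lemma} is applied (to push the vertical weight along the $p_n$-orbit of bushes, under the Collar Assumption~\eqref{eq:collar assumpt}), and the Quasi-Additivity Law with separation is then used to conclude $\Width(V\setminus\bbush)\succeq_{\delta_1,\bar p} p_n K$. Without these tools the summation is unjustified, and your constant $\tfrac34$ is also too optimistic: the actual bound carries an implicit constant depending on $\delta_1$ and $\bar p$, which is why $p_n\gg_{\delta_1,\bar p}1$ (i.e.\ $n\gg_{\bar p,\delta}1$) is needed to dominate $2K$.

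Finally, your treatment of $A^{\mathrm{per}}$ (``the failing $\delta$-portion escapes into vertical weight by Lemma~\ref{lem:restr:buffers}'') glosses over the same point: a failure of condition~\ref{c3:dfn:s delta inv rect} can send the $\delta$-portion into \emph{other horizontal} arcs of the lifted diagram rather than into vertical weight, so again one cannot simply transfer it to $\AA_\ver$. The paper sidesteps this by first running the vertical-weight dichotomy and only then, in the small-vertical-weight branch, showing that most of $\HH_c$ is periodic and produces a single wide $\delta$-almost periodic rectangle with width $\ge K/15$.

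Your overall scheme touches the right ingredients (monotonicity of $\AA^{k}_\hor$, stabilization and alignment with $\Hub_F$ via Lemma~\ref{lem:Alignment}, the periodic/non-periodic dichotomy via Lemma~\ref{lem:per arcs}), but two central steps are asserted rather than proved, and these are exactly where the paper's argument differs and where the real work lies.

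First, the step ``the Gr\"otzsch inequality, combined with the $\psi^\bullet$-covering structure, converts a definite fraction of $\Width^\can(\alpha)$ into local vertical weight at some $\bush^{[n]}_j$'' for $\alpha\in A^{\mathrm{np}}$ is not established, and the mechanism does not work the way you describe. A non-periodic horizontal arc, after restriction by $\iota^{*}$ or pullback by $f^{*}$, typically produces \emph{other horizontal arcs}, not vertical ones; no single application of Gr\"otzsch creates vertical weight. What the paper proves instead is an extremal argument: working under the negation of \ref{case:P:loc:thm:prim pull-off} (all local vertical widths $\Width(\AA^{q}_{i,\ver})\le \delta_1 K$), one takes the arc $e\in H$ of \emph{maximal} width, uses Lemma~\ref{lem:sss:in out:sat flower} together with Lemma~\ref{lem:restr:buffers} and the Series Law to show that if $e$ were not inside a satellite flower, its pullback under $f^{2p_n}$ would have strictly larger width --- contradicting maximality. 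This forces the dominant arc to be genuine periodic and localized in a satellite flower. Your partition-and-convert argument skips this mechanism, and I do not see how to make ``escapes to vertical'' rigorous without reproducing the extremal contradiction.

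Second, the quantitative conclusion for case~\ref{case:P:thm:prim pull-off},
\[
\Width(\AA_\ver)\;=\;\sum_i \Width(\AA_{i,\ver})\;\ge\;\tfrac34 p_n K - O_{p_n}(1)\;\ge\;2K,
\]
is not a valid inference. The local vertical widths $\Width(\AA_{i,\ver})$ all live in the same surface, and one cannot simply add them to lower-bound $\Width_\bullet(F)$; the crucial propagation ``one $\kappa$ with $\Width(\AA^{q}_{\kappa,\ver})\ge\delta_1 K$ forces $\Width(V,\bush_j)\succeq K$ for \emph{all} $j$'' is nontrivial. In the paper this is precisely where the Covering Lemma of~\cite{covering lemma} is applied (under the Collar Assumption~\eqref{eq:collar assumpt}), and the Quasi-Additivity Law with separation is then used to conclude $\Width(V\setminus\bbush)\succeq_{\delta_1,\bar p} p_n K$. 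Without invoking these tools the summation is unjustified; moreover the implied constants depend on $\delta_1$ and $\bar p$, which is why $p_n\gg_{\delta_1,\bar p}1$, hence $n\gg_{\bar p,\delta}1$, is needed to dominate $2K$ --- your constant $\tfrac34$ is too optimistic. Similarly, your claim that the ``failing $\delta$-portion'' of a periodic rectangle escapes to vertical weight via Lemma~\ref{lem:restr:buffers} glosses over the possibility that this portion lands in other horizontal arcs, which the paper avoids by first running the vertical-weight dichotomy and only then extracting a single wide $\delta$-almost periodic rectangle from the dominant arc.
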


If the $n$th renormalization of $F$ is primitive, then $F_{n-1}$ has no periodic rectangles; i.e., Case \ref{case:P:thm:prim pull-off} holds.

\begin{proof}  (See also~\S\ref{sss:intro:PrimPullOff}.) Consider the periodic cycle $\bbush^{[n]}$ of level $n$ little bushes in the dynamical plane of $F\colon U\rightrightarrows V$. Recall from~\eqref{eq:F^k} that $F^k\colon U^k\rightrightarrows V$ represents the $k$th iteration of $F$.

The degeneration of $U^k\setminus \bbush^{[n]}$ is described by the WAD $ \AA^k\equiv \AA^{[n],k}$; let us consider its horizontal and vertical parts $\AA^{k}_\hor, \AA^k_\ver$, see details in~\S\ref{sss:alignment with HT}. Since the $U^k\setminus \bbush^{[n]}$ decrease, so are the arc diagrams representing $\AA^k_\hor$; i.e., the $A^k_\hor\equiv \AD \left(\AA^k_\hor\right)$ stabilize with $k\le 3p_n$. And since $f^{-1}(U^k\setminus \bbush^{[n]})\subset U^{k+1}\setminus \bbush^{[n]}$, the $A^k_\hor$ stabilize at an invariant AD; i.e., $A^k_\hor$ is aligned with the Hubbard tree of the superattracting model.

Consider the local part $\AA^k_i$ of $\AA^k$ around $\bush^n_i$. Write $q\coloneqq  (10 \overline p +3 )p_n$. We remark that since $q$ is linear in $p_n$, the $q$th iterate has a bounded degree on small bushes and the below application of the Covering Lemma will lead to estimates independent of $n$.

If for all $i$ we have
\begin{equation}
\label{eq:LC:prim pull-off} \Width(\AA^{q}_i)-\Width(\AA^0_i)\le \delta_1 K,\sp\sp \sp \delta_1\ll \delta,
\end{equation}
 then most of the degeneration in $\AA^0_i$ is represented by genuine periodic arcs \S\ref{sss:periodic arcs}; this leads to Case~\ref{case:S:thm:prim pull-off},  see~\S\ref{prf:P:thm:prim pull-off}.
 
If~\eqref{eq:LC:prim pull-off} does not hold some $i$, then $\Width(\AA^q_{i, \ver})\ge \delta_1 K$ and applying the Covering Lemma we obtain
\[\Width\left(V\setminus \bush_j^{[n]}\right)\asymp_{\delta_1, \overline p}\  K\hspace{0.5cm }\text{ for all }j.\]
Applying the Quasi-Additivity Law and using $p_n\gg_{\delta_1,\overline p} 1 $, we obtain
\[\Width(F)\succeq_{\delta_1, \overline p} \ \sum_j \Width\left(V\setminus \bush_j^{[n]}\right) \succeq_{\delta_1, \overline p}  \ p_n K\gg 2K,\]
see~\S\ref{ss:CoverLmm}; this is Case~\ref{case:P:thm:prim pull-off}.

Below we will make a more technical exposition.
\end{proof}

\subsection{Alignment of WAD with the Hubbard continuum}\label{sss:alignment with HT} Following~\S\ref{sss:LittleBushes}, let \[\bbush\equiv\bbush^{[n],(0)} \sp\sp \text{ and } \sp\sp \bbush^{(m)}\equiv\bbush^{[n],(m)}=f_X^{-m} (\bbush)\]
 be the periodic cycle of level-$n$ bushes and its preimage, where $f_X=[f\colon X\to Y]$ is a ql restriction of $f$, see~\eqref{eq:dfn:ql b domain} in~\S\ref{sss:qlb:Defn}. The similar suppression of indices is applied to WAD from~\S\ref{sss:WADs}.

Recall from \S\ref{sss:WAD:covering:psi maps} and~\S\ref{sss:monot:AA^k} that we have 
\begin{equation}
\label{eq:covering+monoton:AA}
\AA^{(m+1),k+1}_{\hor}= f^*\left(\AA^{(m),k}_{\hor}\right)\sp\sp \text{ and }\sp\sp \AA^{(m),k+1}_\hor \le  \AA^{(m),k}_\hor.
\end{equation}

Let us choose a sufficiently big threshold $C\gg_{p_n} 1$ so that~\eqref{eq:prop:dominat} is applicable for $m=0$. By~\eqref{eq:covering+monoton:AA}, the ADs forming $\AA^k_\hor-C^k$ decrease, hence for a certain
\begin{equation}
\label{eq:t:main est}
\bbt\le \  \big[\text{maximal number of horizontal arcs on  $V\setminus \bbush_n$}\big]\ \le 3p_n
\end{equation}
the arc diagram
\begin{equation}
\label{eq:dfn:HH}
H=\AD(\HH), \hspace{0.5cm} \text{ where }\hspace{0.5cm} \HH\coloneqq \AA^\bbt_\hor -  C^\bbt
\end{equation} coincides with the AD of $\AA^{\bbt+1}_\hor - C^{\bbt+1}$.

Since the WAD $\AA^{\bbt+1}_\hor - C^{\bbt+1}$ on $U^{\bbt+1}\setminus \bbush$ is dominated by the WAD $ f^* \left(\HH \right)=f^*\left(\AA^\bbt_\hor -  C^\bbt \right)$ on $U^{\bbt+1}\setminus \bbush^{(1)}$, see~\eqref{eq:prop:dominat} (where $M=C^\bbt$) in \S\ref{sss:domination}, we obtain that $H$ is an invariant AD, and therefore it is aligned with the Hubbard continuum by Lemma~\ref{lem:Alignment}.

\subsection{Case~\ref{case:P:thm:prim pull-off}} \label{ss:CoverLmm} Consider local WAD $\AA_{i}^{k}$ as in~\S\ref{sss:WADs}, we have by~\S\ref{sss:Width:F_ni}
\begin{equation}
\Width(\AA_{i}^{k}) \ge \Width(\AA_{i}^{0})=\Width_\bullet(F_{n,i})-O_{p_n}(1)\ge K/2-O_{p_n}(1).
\end{equation}

Write $q\coloneqq  (10 \overline p +3 )p_n$ and fix a sufficiently small $\delta_1>0$. Assume that:
\begin{enumerate}[label=\text{(P${}_\loc$)},font=\normalfont,leftmargin=*]
\item \label{case:P:loc:thm:prim pull-off} there is a $\kappa$ such that
\[\Width(\AA_{\kappa,\ver}^{ q})\ge \delta_1 K.\]
\end{enumerate}

Since all local weights are comparable~\S\ref{sss:Width:F_ni}, we have the Collar Assumption:
\begin{equation}
\label{eq:collar assumpt}
\Width\left(V\setminus \bigcup_{j\not=i}\bush_j, \sp \bush_i \right)\asymp K\hspace{0.8cm} \text{ for all $i$.}
\end{equation}

Since $\Width(\AA_{\kappa,\ver}^{ q+j})\ge  \Width(\AA_{\kappa,\ver}^{ q}) \ge \delta_1 K$ and~\S\ref{eq:collar assumpt} holds, we obtain from the~\cite[Covering Lemma]{covering lemma} that
\[\Width(V, \bush_{\kappa+i})\succeq_{\delta_1, \overline p} \ \Width(U^{q+i}, \bush_{\kappa}) \ge \Width(\AA_{\kappa,\ver}^{ q+i}) -O_{p_n}(1) \succeq_{\delta_1, \overline p}\ K\]
for all $i\in \{0,1, \dots, p_n-1\}$. Therefore,
\[\sum_{j=0}^{p_{n}}\Width(V, \bush_{j})\succeq_{\delta_1, \overline p}\  p_n K.\]
Applying \cite[Quasi Additivity Law with separation]{covering lemma} together with~\eqref{eq:collar assumpt} and using $p_n\gg _{\delta_1,\overline p}\ 1$, we obtain
\[\Width(V\setminus \bbush)\succeq_{\delta_1,\overline p }\ \sum_{j=0}^{p_{n}}\Width(V, \bush_{j})\succeq_{\delta_1, \overline p}\   p_n K\ge 2K.\]
We conclude that
\[ \Width_\bullet(F) = \Width( V,\  \bush_F)\ge \Width( V,\ \bbush) \ge 2K.\]

This establishes Case~\ref{case:P:thm:prim pull-off} from~\ref{case:P:loc:thm:prim pull-off}.

\subsection{Case~\ref{case:S:thm:prim pull-off}} 
\label{prf:P:thm:prim pull-off}
Let us now assume that~\ref{case:P:loc:thm:prim pull-off} does not hold: for all $i$ we have $\Width(\AA_{i,\ver}^{q})\le \delta_1 K$. This implies that most paths in the canonical lamination $\Fam (\AA_{i})$ of $\AA_i$ are horizontal and they restrict to horizontal paths under the immersions $\iota^s\colon (U^s,\bbush)\to (V,\bbush)$ for $s\le q$. We will use notations of~\S\ref{sss:in out: sat flowers}.

Let $\HH$ be the WAD from~\eqref{eq:dfn:HH} on $(U^\bbt, \bbush)$.  Consider an arc $e$ in $H$ with maximal $\Width(e)$. Since the number of arcs in $H$ is bounded by $3p_n$ and $\Width(\HH)\ge p_nK/4 - \delta_1 p_n K$ by~\eqref{eq:LocWidth is compar}, we obtain that $\Width(e)\ge K/14$. We claim that $e$ is within a periodic satellite flower $\bush^{[n-1]}_c$. In particular, the $n$-th renormalization of $f$ is satellite. 

Assume converse: $e$ is not in any satellite flower $\bush^{[n-1]}_c$. By Lemma~\ref{lem:sss:in out:sat flower}, $e$ overflows arcs $e_1,e_2\in (f^{2p_n})^*(H)$ with $f^{2p_n}(e_1)=f^{2p_n}(e_2)=e_\new\in H$. By Lemma~\ref{lem:restr:buffers}, $\big(\iota^{2p_n}\big)^*\left[\Fam(e-\delta_1K)\right]$ is a horizontal family of curves in $U^{\bbt+2p_n}\setminus \bbush$; after removing $O_{p_n}(1)$ curves, this family is dominated by $(f^{2p_n})^*(H)$. Therefore, by the Series Law:
\[\max\left(\Width(e_1), \ \Width(e_2)\right) \ge 2 \Width(e) - 2.1\ \delta_1K. \]
Applying $f^{2p_1}$, we obtain the contradiction: $\Width(e_\new)>\Width(e)$.

Consider now a  satellite flower $\bush^{[n-1]}_c$.  As in~\S\ref{sss:in out: sat flowers}, we denote by $\HH_c$ the WAD consisting of arcs of $\HH$ that are in $\bush^{[n-1]}_c$.

Write $q'=10 \overline p p_n$. Let  $\HH^{(q')}_c$ be the WAD consisting of arcs of $\big(f^{q'}\big)^*\HH$ that are in $ \bush^{[n-1]}_c$. By Lemma~\ref{lem:restr:buffers}, the restriction $(\iota^{q'})^*\left[\HH_c-\delta_1 K \right]$ consists of horizontal curves. Since $H=\AD(\HH)$ is aligned with the Hubbard dendrite, $(\iota^{q'})^*\left[\HH_c-\delta_1 K \right]$ is, after removing $O_{p_n}(1)$ curves, dominated by $\HH^{(q')}_c$, see~\S\ref{sss:domination}. And since $(f^{q'})_*\colon \HH^{q'}_c\to \HH_c$ is a bijection, as most $\overline p \delta_1K + 2C^2< \delta_2 K$ curves in $\HH_c$ can have expansivity number greater than $2$, where $\delta_1\ll \delta_2\ll \delta$. This implies that most curves in $\Fam(\HH_c)$ are within rectangles representing genuine periodic arcs~\S\ref{sss:periodic arcs} that connect neighboring level $n$ bushes of $\bush^{[n-1]}_c$, see Lemma~\ref{lem:per arcs}. This also demonstrates that all the $\HH_c$ have comparable width -- the difference of their weights are bounded by $\delta_2 K$.

Since one of the $\HH$ has an edge $e$ with $\Width(e)\ge K/14$, 
the map $F\colon U^{\bbt+1}\rightrightarrows U^\bbt$ has a $\delta/2$-almost periodic rectangle $R$ between neighboring level $n$ bushes of $\bush^{[n-1]}_0$ with $\Width(R)\ge K/15$ and satisfying Remark~\ref{rem:dfn:s delta inv rect}.

\subsubsection{Pushing forward $R$ into $V\setminus\bbush^{n}$ and then into the dynamical plane of $F_{n-1}$} Since $\Width(\AA_{i,\ver}^{q})\le \delta_1 K$, curves in $\Fam\big(\AA_{i,\hor}-2\delta_1 K\big)$ restrict under $(\iota^\bbt)^*$ to horizontal curves in $(U^\bbt,\bbush)$. Since $\AA^{\bbt}_i \le \AA_i$, see~\eqref{eq:sss:monot:AA^k}, we obtain from Lemma~\ref{lem:restr:buffers} that 
\[\Fam\big(\AA^\bbt_{i,\hor}-2 \delta_1 K -2\big)  \subset\ (\iota^\bbt)^*\left[ \Fam^\full\big(\AA_{i,\hor}-2\delta_1 K\big) \right]. \]
Therefore, $\Fam(\AA^\bbt_{i,\hor}-2 \delta_1 K -2)$ can be univalently pushed forward under $(\iota^\bbt)_*$. By removing $\delta_1K$ buffers from $R$, we push forward $R$ into the dynamical plane of $F\colon  U\rightrightarrows V$ and then push forward $R$ into the dynamical plane of $F_{n-1}$, see Remark~\ref{rem:dfn:s delta inv rect}. This establishes Case~\ref{case:S:thm:prim pull-off} of the theorem.

%It follows that all curves in $\Fam\big (\AA_{i,\ver}^{\bbt}\big-\delta_1 K)$ can be lifting to horizontal curves in 

\section{Waves}\label{waves sec} 
Given a compact connected filled set $ X\subset \C$, we denote by $\partial ^c X$ its Carath\'eodory boundary;  i.e., the set of prime ends of $\wC\setminus X$. A Riemann map identifies $\partial^c \widetilde X$ with the unit circle $\T$. For a compact connected subset $Y\subset X$, we denote $\partial^c_X Y\subset \partial^c X$ the set of prime ends of $X$ accumulating at $Y$. A \emph{side of $Y$ rel $X$} is a connected component of $\partial^c_X Y $ viewed as a subset of $ \T\simeq \partial ^cX$.

Let $F=(f,\iota)\colon U\rrto V$ be a $\psi^\bullet$-ql map. Assume that
\begin{itemize}
\item $(\filled_k)_{k}$ is a forward invariant collection of periodic and preperiodic little filled Julia sets of $f$ of the same level;
\item $T\supset \bigcup_k \filled_k $ is forward invariant compact connected filled subset of the Julia set of $f$;
\item $T$ is $\iota$-proper;
\item every $\filled_k$ has finitely many sides in $T$. 
\end{itemize}
A relevant example for us is $T=\bush_F^{(m)}$. Let us denote by $M$ the total number of sides of all $\filled_k$.

Consider a side $\filled_k^\iota$ of $\filled_k$ in $T$. A \emph{wave $\FamS$ above} $\filled_k^\iota$ is a lamination of proper paths in $U\setminus T$ such that every curve $\gamma\in \FamS$ starts and ends at $\partial^cX\setminus \filled_k^\iota$ and goes above $\filled_k^\iota$: the bounded component $O$ of $U\setminus (\gamma\cup T)$ contains $\filled_k^\iota$ on its Carath\'eodory boundary (i.e., prime ends of $\filled_k^\iota$ are accessible from $O$). 

\begin{lem}[Wave Lemma]
\label{lem:Wave}
Let $\FamS$ be a wave as above. Then $\Width(U\setminus T) \succeq_M \Width(\FamS).$
\end{lem}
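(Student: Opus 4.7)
The plan is to bound $\Width(\FamS)$ from above by a sub-family of $\Fam(U,T)$ obtained from $\FamS$ by a pigeonhole reduction followed, if necessary, by a cutting construction. Each curve $\gamma \in \FamS$ has its two endpoints on one of at most $M+1$ prime-end components of $\partial(U\setminus T)$, namely $\partial U$ together with the at most $M$ sides of the $\filled_k$'s in $T$ different from $\filled_k^\iota$. Since every such $\gamma$ encloses $\filled_k^\iota$ exactly once, its homotopy class in $U\setminus T$ is determined by the unordered pair of endpoint components, yielding $N=O(M^2)$ classes. Pigeonholing the transverse measure of $\FamS$ across these classes extracts a sub-lamination $\FamS' \subset \FamS$ of width at least $\Width(\FamS)/N$ whose arcs are pairwise homotopic and foliate a topological rectangle $R \subset U\setminus T$ with horizontal sides on fixed components $S_1, S_2$, each vertical curve of $R$ enclosing $\filled_k^\iota$.

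If $S_1$ or $S_2$ equals $\partial U$, then $R \subset \Fam(U,T)$ and monotonicity of width (\S\ref{sss:Monot+Gr}) immediately gives $\Width(U\setminus T) \ge \Width(R) \succeq_M \Width(\FamS)$. Otherwise both $S_1, S_2$ are sides of $T$ distinct from $\filled_k^\iota$, and we convert $R$ to vertical form by cutting. Fix a proper embedded arc $\ell \subset U\setminus T$ from $\filled_k^\iota$ to $\partial U$ in minimal position with respect to the vertical foliation of $R$; since every vertical curve of $R$ separates $\filled_k^\iota$ from $\partial U$ and all vertical curves are homotopic, they have common geometric intersection number $1$ with $\ell$ after a further pigeonhole on intersection multiplicity (losing at most another $M$-dependent factor). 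Cutting $U\setminus T$ along $\ell$ produces a simply connected domain $\Omega$ (since $U\setminus T$ is a topological annulus and $\ell$ is a cross-cut of it) whose boundary contains two lips $\ell^\pm$, each joining a lift of the $\filled_k^\iota$-endpoint of $\ell$ to a lift of its $\partial U$-endpoint. In $\Omega$ the rectangle $R$ splits into two sub-rectangles $R^\pm$ of width $\Width(R)$, with $R^+$ going from $S_1$ to $\ell^+$ and $R^-$ from $\ell^-$ to $S_2$.

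Each vertical curve of $R^+$ can then be continued, inside a thin collar of $\ell^+$ inside $\Omega$, to a distinct point of $\partial U$ in a neighborhood of the $\partial U$-endpoint of $\ell^+$. The resulting family is a disjointly supported sub-lamination of $\Omega$ of proper arcs from $S_1$ to $\partial U$; applying the Gr\"otzsch inequality along the collar (which has definite modulus controlled by the topology of $U\setminus T$) gives that this family has width at least $\Width(R)$ minus an $M$-dependent additive constant. Re-identifying $\Omega$ with $U\setminus T$, the family descends to a sub-family of $\Fam(U,T)$ of the same extremal length (same underlying curves, same ambient conformal structure), yielding
\[\Width(U\setminus T) \ge \Width(R) - O(M) \succeq_M \Width(\FamS).\]

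The main obstacle is the horizontal case: one must verify that continuing the vertical curves of $R^+$ along $\ell^+$ produces a family in $\Fam(U,T)$ of width truly comparable to $\Width(R)$. The subtlety is that, viewed in the uncut surface $U\setminus T$, the continuations of different vertical curves share the outer portion of $\ell$ and hence do not form a strictly disjointly supported lamination. This is resolved by performing the width bookkeeping inside the simply connected cut surface $\Omega$, where a thin collar of $\ell^+$ carries the continuations disjointly and the Series Law furnishes the required additive control; conformal invariance of extremal length on the underlying point sets then transports the estimate back to $U\setminus T$.
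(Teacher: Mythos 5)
Your proposal is purely topological, whereas the Wave Lemma is fundamentally a dynamical statement: its truth depends on the fact that $T$ is forward invariant, $\iota$-proper, and the $\filled_k$ are periodic Julia sets. Without dynamical input the statement is false. Consider an annulus $U\setminus T$ of bounded width (say modulus $1/2\pi$) with a side $\filled_k^\iota$ of very small harmonic measure $\varepsilon$ in $\partial^c T$. A nested lamination of arcs from $\partial^c T\setminus\filled_k^\iota$ over $\filled_k^\iota$ has width on the order of $\log(1/\varepsilon)$, which is unbounded as $\varepsilon\to 0$ while $M$ can remain $1$ and $\Width(U\setminus T)$ stays bounded. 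So no estimate of the form $\Width(U\setminus T)\succeq_M \Width(\FamS)$ can be derived from the topological type of $(U\setminus T,\, T,\, \filled_k^\iota)$ alone. The paper's proof makes essential use of the self-map: it considers the iteration $(f^n,\iota^n)\colon U^n\rrto V$ with $n\le M^3$ chosen so that $f^n|\filled_k^\iota$ covers $\partial^c_T\filled_k$ at least twice, and shows that either a $\tfrac13$-portion of $\FamS$ lifts vertically (immediately bounding the annulus width) or the univalently lifted portion, decomposed above the two preimage sides $X_a,X_b$, pushes forward to a wave of width $\ge\tfrac43 K - O_M(1)$; iterating this amplification yields a contradiction. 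None of this machinery appears in your argument.

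The concrete breakdown in your argument is the final Gr\"otzsch step. After cutting along $\ell$, your continuation family $\FamG$ overflows both $R^+$ and the thin collar $\mathcal C$ of $\ell^+$, so by~\eqref{eq:Grot} one obtains only the \emph{upper} bound $\Width(\FamG)\le \Width(R^+)\oplus\Width(\mathcal C)$. If $\Width(\mathcal C)\ll\Width(R^+)$ — which is exactly what happens when $\filled_k^\iota$ has small harmonic measure, since $\ell$ then crosses a conformally thin part of the annulus near that side — the harmonic sum is $\approx\Width(\mathcal C)$ and there is no way to conclude $\Width(\FamG)\gtrsim\Width(R)$. Your claim that the collar "has definite modulus controlled by the topology of $U\setminus T$" is the crux, and it is false: moduli are conformal, not topological, invariants, and the collar can degenerate. (Two smaller issues: a wave by definition has both endpoints on $\partial^c T\setminus\filled_k^\iota$, so the case where $S_1$ or $S_2$ is $\partial U$ does not occur; and cutting along a cross-cut yields $\Width(R^\pm)\ge\Width(R)$ rather than equality — this direction happens to be harmless, but it signals that the extremal-length bookkeeping in the cut surface is being treated too casually.)
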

\begin{proof}
We will use the following fact:
\begin{lem}
\label{lem:WL: removing bugfers}
Suppose $f\colon A\to B$ is a degree $m$ covering between closed annuli. Suppose $J\subset \partial A$ is an interval. Let $\FamS$ be a wave in $A$ above $J$. Then $\FamS$ contains a genuine subwave $\FamS^\new$ such that $ f\mid \FamS^\new$ is injective and \[\Width(\FamS^\new)\ge \Width(\FamS)-O(\ln m).\]
\end{lem}
\noindent In particular, if $f\mid J$ is not injective, then $\Width(\FamS)=O(\ln m)$.
\begin{proof}
Consider universal covering maps $X\to A$ and $X\to B$. Their group of deck transformations are isomorphic to $m\Z$ and $\Z$ respectively. Let $\FamS_{k }$, $k\in m\Z$ be the lifts of $\FamS$ under $X\to A$; all these lifts are disjoint and permuted by $m\Z$. Let $\FamS_{k}$, $k\in \Z$ be the orbit of $\FamS$ under $\Z$. We {\bf claim} that, by removing $O(\ln m)$ outermost curves from $\FamS$, we obtain $\FamS^\new$ and the new $\FamS^\new_k, k\in \Z$ so that \[\FamS^\new_0,\sp \FamS^\new_1,\sp\dots ,\FamS^\new_{m-1}\] are pairwise disjoint. Then, by the claim, all $\FamS^\new_k,k\in \Z$ are pairwise disjoint, i.e.~\[f\colon \FamS^\new\longrightarrow f(\FamS^\new)=\FamS^\new_0/\Z\]
is injective. 

Let us verify the claim. Let us denote by $A_1$ the component of $\partial A$ containing $J$, by $H\simeq \Z/m$ the  group of deck transformations of $f\colon A\to B$. Then $H$ acts on $A_1$. We can decompose $\FamS=\FamS^0\sqcup \FamS^1\sqcup \dots \FamS^T$ into possibly empty pairwise-disjoint laminations such that
\begin{itemize}
\item curves in $\FamS^0$ start and end at an interval $I^0\subset  A_1$ that is a fundamental interval for the action $H\curvearrowright
 A_1$; 
\item curves in $\FamS^t$, $t>0$ start at an interval $I^t_- \subset  A_1$ and end at an interval $I^t_+\subset  A_1 $ such that one of the intervals $I^t_-, I^t_+$ is within a union of $2^t-1$ fundamental intervals for the action $H\curvearrowright
 A_1$ and the interval $J^t\subset A_1,$ $ J^t\cap J\not=\emptyset $ between $I^t_-,I^t_+$ is the union of exactly $2^t-1$ fundamental intervals of the action $H\curvearrowright
 A_1$;
\item $T= O(\ln m)$.
\end{itemize}
Then $\Width(\FamS^t)=O(1)$ for $t>0$ because, after removing $1$ buffers, $\FamS^t$ crosses its shift under $H$, see~\cite[Shift Argument \S A.3]{DL2}. 

Replacing $\FamS$ with $\FamS^0$, we obtain that curves in the $\FamS^{0}_k$ start and end at pairwise disjoint intervals. Removing an extra $O(1)$ outermost curves, we obtain that the new $\FamS^\new_l$ are pairwise disjoint.
\end{proof}

Let us assume first that $\filled_k$ is periodic. Since the number sides is $M$ and since the period of small Julia sets is bounded by $M$, we can fix an iteration $f^n\mid T$, where $n$ is bounded in terms of $M$ (for instance, $\  n\le M^3$) such that $f^n\mid \filled_k^\iota$ covers $\partial^c_T \filled_k$ at least twice. Consider the associated iteration \[(f^n,\iota^n)\colon U^n\rrto V,\hspace{1cm}\text{see~\eqref{eq:F^k}}.\] Under the immersion $\iota^n$, either $\frac 1 3 \Width(\FamS)$ part of $\FamS$ lifts to a vertical family of $U^n\setminus T$ or $\frac 2 3 \Width(\FamS)$ part of $\FamS$ lifts univalently into the lamination $\FamS'$. In the former case, the lemma is proven: 
\begin{equation}
\label{eq:lem:Wave:prf}
\Width(V\setminus T)\ge \frac{1}{2^n}\Width(U^n\setminus T)\ge  \frac{1}{2^n 3}\Width(\FamS).
\end{equation}

Assume the latter case. Let $\widetilde T$ be the full preimage of $T$ under $f^n$. Then $\filled_k^\iota$ splits into finitely many sides $X_1,X_2,\dots, X_t$ of $\filled_k$ in $\widetilde T$. Every $X_i$ maps univalently to the side $f^n(X)$ of $\filled_k$ in $T$ under $f^n$. Since $f^n\mid \filled_k^\iota$ covers $\partial^c_T \filled_k$ at least twice, we can choose two sides $X_a,X_b$ that map onto $\filled^\iota_k$. Every curve $\gamma\in \FamS'$ has first shortest subcurves $\gamma_a, \gamma_b$ (which may coincide) in $U^n\setminus \widetilde T$ above $X_a,X_b$ respectively. By Lemma~\ref{lem:WL: removing bugfers}, the width of $\gamma$ with $\gamma_a=\gamma_b$ is $O_M(1)$.

Write $K\coloneqq \Width(\FamS)$. Let $\Fam'$ be the family of all $\gamma$ in $\FamS$ with $\gamma_a\not =\gamma_b$. Let $\FamS_a,\FamS_b$ be family of curves consisting of $\gamma_a$ and $\gamma_b$ with $\gamma$ in $\FamS'$. Since $\FamS'$ overflows $\FamS_a$ and then $ \FamS_b$, either $\Width(\FamS_a)$ or $\Width(\FamS_b)$ is at least $4/3K-O_M(1)$. After removing $O_M(1)$ buffers and applying Lemma~\ref{lem:WL: removing bugfers}, the waves $\FamS_a,\FamS_b$ map univalently by $f^n$ onto waves above $\filled_k^\iota$. We obtain a wave with width $\ge \frac 4 3 K-O_M(1)$ and the whole argument can be repeated with a bigger wave leading eventually to the contradiction.

Assume now that $\filled_k$ is strictly preperiodic. Let $f^n$ be the smallest iteration so that $f^n(\filled_k)$ is periodic. As above, either the $1/3$ part of $\FamS$ lifts to a vertical family under $\iota^n$ or the $2/3$ part of $\FamS$  lifts univalently. In the former case,~\eqref{eq:lem:Wave:prf} concludes the proof. In the latter case, we construct the family $\FamS_a$ as above and then we pushforward $\FamS_a$ under $f^n$. The result will be a wave above a side of a periodic Julia set. This reduces the preperiodic to the periodic case. 
\end{proof}

\section{Pull-off for periodic rectangles}
\label{s:Pulloff for per rectangles}

 Consider a $\psi$-ql like map $F=(f,\iota)\colon U\rightrightarrows V$. We assume that the first renormalization of $F$ is satellite.

\begin{thm}
\label{thm:sat pull off}
Fix a combinatorial bounds $\overline p$ on the renormalization period~\S\ref{sss:infin renorm}. Then for every sufficiently small $\delta>0$ there is a $C_\delta = C_{\delta, \overline p}>1$ with
\[C_\delta\to \infty\sp \sp \text{ as }\sp\sp \delta\to 0\]
such that the following holds for every $\psi^\bullet$-ql map $F$, and its $\psi^\bullet$-ql renormalizations $\ F_1=\RR^\bullet (F),\ F_2=\RR^{2\bullet }(F)$, see~\eqref{RR bullet f}.

Suppose that $R$ with $\Width(R)\gg_{\delta, \overline p}1 $ is a $\delta$-almost periodic rectangle (see Definition~\ref{dfn:s delta inv rect}) in the dynamical plane of $F$ between bushes $\bush_{a}^{[1]}$ and $\bush_{a+1}^{[1]}$. Then 
\begin{equation}
\label{eq:1:thm:sat pull off}
\text{either }\sp \sp\sp\Width_\bullet(F_2)\ge C_\delta K \sp\sp\text{ or }\sp\sp  \Width_\bullet(F)\ge C_\delta K.
\end{equation}
Moreover, if $F=\RR^{n\bullet} (G)$, then we also have the alternative
\begin{equation}
\label{eq:2:thm:sat pull off}
\text{either }\sp \sp\sp\Width_\bullet\left[ \RR^{n+2 \bullet} (G)\right]\ge C_\delta K \sp\sp\text{ or }\sp\sp  \Width_\bullet(F)\ge C_\delta K
\end{equation}
(independent of $n$).
\end{thm}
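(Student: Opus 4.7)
The plan is to convert the $\delta$-almost periodicity of $R$ into a pair of wide laminations $\FamS_1,\FamS_2$ in $U^{p_1}$ via the Gr\"otzsch inequality, and then to reroute their width into either Lemma~\ref{lem:reverse to sup chain rule} or the Wave Lemma~\ref{lem:Wave}, following the heuristic of Figure~\ref{Fg:Sat Pull off:Feig Comb} and~\S\ref{ss:intro:SatCase}. First I would use Definition~\ref{dfn:s delta inv rect} to obtain the vertical sublamination $\FamG\subset\Fam(R)$ of width $(1-\delta)K$, which lifts through $\iota^{p_1}$ to $\FamG^1\subset U^{p_1}\setminus \bbush^{[1],(p_1)}$ overflowing the periodic pullback $R^{(1)}$. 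In $U^{p_1}$ the preperiodic hull $\bush^{[1],(p_1)}_a \supsetneq \bush^{[1]}_a$ carries preperiodic level-$2$ bushes of $F_1=\RR^\bullet F$, and similarly on the $\bush^{[1]}_{a+1}$ side. Let $\FamS_1$ (resp.\ $\FamS_2$) denote the narrow collar family of curves in $U^{p_1}$ between two level-$2$ bushes of $\bush^{[1],(p_1)}_a \setminus \bush^{[1]}_a$ (resp.\ $\bush^{[1],(p_1)}_{a+1} \setminus \bush^{[1]}_{a+1}$) separating $\bush^{[1]}_a$ from $R^{(1)}$. Since $\FamG^1$ overflows the disjoint parallel families $R^{(1)}$ and $\FamS_1$, and since $f^{p_1}\colon R^{(1)}\to R$ is a conformal isomorphism (the critical value of $F$ lies in $\bush^{[1]}_0$, not in $R$), so that $\Width(R^{(1)})=K$, the Gr\"otzsch inequality~\eqref{eq:Grot} yields
$$(1-\delta)K \le \Width(\FamG^1) \le \Width(R^{(1)}) \oplus \Width(\FamS_1) = K \oplus \Width(\FamS_1),$$
whence $\Width(\FamS_i)\ge \tfrac{1-\delta}{\delta}K \eqqcolon C_\delta K$ for $i=1,2$, with $C_\delta\to\infty$ as $\delta\to 0$.

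Next I would run a landing-versus-wave dichotomy on the curves of $\FamS_1$: each such curve starts on one of the two level-$2$ bushes bounding $\FamS_1$ and either (a) \emph{lands} on the opposite bush, or (b) passes above it and returns, producing a wave in the sense of~\S\ref{waves sec}. Decomposing $\FamS_1=\FamS^{\mathrm{land}}_1\sqcup \FamS^{\mathrm{wave}}_1$ up to a universally bounded-width loss, one of these families has width at least $C_\delta K/2$. In the landing case, iterating by $f^m$ with $m\le 2p_2$ (absorbing the bounded multiplicity via Lemma~\ref{lem:WL: removing bugfers}) transports $\FamS^{\mathrm{land}}_1$ to a horizontal lamination in $V\setminus \bbush^{[2]}$ connecting two periodic level-$2$ bushes; Lemma~\ref{lem:reverse to sup chain rule} then yields $\Width_\bullet(F_2)\succeq_{\overline p} C_\delta K$. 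In the wave case, applying the Wave Lemma~\ref{lem:Wave} with a forward-invariant continuum $T\subset U^{p_1}$ containing $\bbush^{[1],(p_1)}$ together with the ambient level-$2$ bushes of $F_1$ gives $\Width(U^{p_1}\setminus T)\succeq_{\overline p} C_\delta K$; pushing forward by the $2^{p_1}$-fold branched covering $f^{p_1}\colon U^{p_1}\to V$ then yields $\Width_\bullet(F)\ge C'_\delta K$, where the $\overline p$-dependent factor $2^{p_1}$ is absorbed into $C'_\delta$ (which still tends to $\infty$ as $\delta\to 0$).

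The ``moreover'' strengthening $\Width_\bullet[\RR^{n+2\bullet}G]\ge C_\delta K$ when $F=\RR^{n\bullet}G$ follows from the landing case by applying Lemma~\ref{lem:reverse to sup chain rule} with $n_0=n$ rather than $n_0=0$; the wave case already yields a bound on $\Width_\bullet(F)$ which is independent of the representation $F=\RR^{n\bullet}G$. The hard part will be making the landing-versus-wave dichotomy rigorous: the $\ell_a$- and $\ell_{a+1}$-excursions allowed by Definition~\ref{dfn:s delta inv rect} let curves of $\FamG^1$ leak briefly outside $\bush^{[1],(p_1)}_a$, and one must bound the width lost both in pruning those excursions and in straightening $\FamS_1^{\mathrm{land}}$ into a genuine invariant lamination landing at periodic level-$2$ bushes (the branched-cover buffer estimate of Lemma~\ref{lem:WL: removing bugfers} being crucial here). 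A secondary subtlety is choosing the continuum $T$ in the wave case so that its sides remain controlled under the finitely many iterates used in the proof of Lemma~\ref{lem:Wave}.
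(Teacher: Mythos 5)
There is a genuine gap: the proposal misses the spiraling/non-spiraling dichotomy that is the central structural idea of the paper's argument. The paper introduces spiraling numbers $t_a(\gamma)$ of curves $\gamma\in R$ around $\bush^{[1]}_a$ (measured against the geodesic continuum $\Pi$ joining $\bush^{[1]}_a$ to $\bush^{[1]}_{a+1}$), and uses the fact that the $f^{sp_1}$-lift $R^{(s)}$ has spiraling numbers cut by a factor of $2^s$ (because $f^{sp_1}\colon \bush^{[1],(sp_1)}_a\to\bush^{[1]}_a$ has degree $2^s$), while the $\iota^{sp_1}$-lift $\FamG^s$ retains the spiraling of $R$. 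In the spiraling case ($t_a\ge 4$) the excess width produced by the Gr\"otzsch inequality is therefore \emph{not} a collar rectangle $\FamS_1$ between level-$2$ bushes: it is a wave wrapping around one side of $\bush^{[1]}_a$ itself, because the entry segments $\ell_a$ of the curves $\gamma\in\FamG^1$ must unwind the mismatch in winding numbers before joining $\Fam^\full(R^{(1)})$. The Wave Lemma must then be applied directly at level $1$, and your landing-vs-wave dichotomy between level-$2$ bushes never arises. Your claim that $\FamS_1$ ``separates $\bush^{[1]}_a$ from $R^{(1)}$'' is exactly what fails when $R$ spirals: there need not be any such collar between $\bush^{[1]}_a$ and the base of $R^{(1)}$, since the base of $R^{(1)}$ can be positioned homotopically ``inside'' the spiral rather than on the far side of a separating level-$2$ bush.

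Even in the non-spiraling case your proposal is incomplete in two respects. First, you do not specify which level-$2$ bushes bound $\FamS_1$, and the choice matters: the paper's Lemma~\ref{lem:bush:prim comb} carefully constructs two geodesic continua $\ell_1,\ell_2\subset\Hub^{(6p_2)}_f$ consisting of preperiodic lifts of the opposite flower petal $\bush^{[1]}_b$, chosen so that $\ell_1,\ell_2$ both separate $\bush^{[1]}_a$ from $\alpha$ and so that the level-$2$ bushes $\bush^{[2]}_i,\bush^{[2]}_{i+1}$ sit strictly between $\ell_1$ and $\ell_2$ (and are disjoint from them). This geometry is what makes the separation claim rigorous (one needs the non-spiraling property at depth $3$ to know $\ell_2$ separates the base of $R^{(q)}$ from $\bush^{[1]}_a$) and sets up the final push-forward through $f$- and $\iota$-iterations to a family in $V\setminus\bbush^{[2]}$ emanating from $\bush^{[2]}_0$. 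Second, you work at depth $s=1$; the paper uses $s=q=10\overline p$, which is why Definition~\ref{dfn:s delta inv rect} requires the lifting conditions to hold for all $s\le 10\overline p$, and this depth is needed to fit the preperiod-$\le 6p_2$ lifts from Lemma~\ref{lem:bush:prim comb}. Your concluding remark correctly identifies that taming the $\ell_a$-excursions is ``the hard part''; in the paper this is precisely what the spiraling-number bookkeeping does, and without it the Gr\"otzsch step cannot be justified.
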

\subsection{Proof of Theorem~\ref{thm:sat pull off}}
Let $\Pi\subset \bush_F$ be the geodesic continuum between $\bush^{[n]}_a$ and $\bush^{[1]}_{a+1}$. We will use notations from Definition~\ref{dfn:s delta inv rect} such as $\FamG, \FamG^s, R^{(s)}$.

\subsubsection{Spiraling Numbers}First we will introduce the spiraling parameters rel $\Pi$ for a curve $\gamma\in R$ shortly summarized in the following remark:

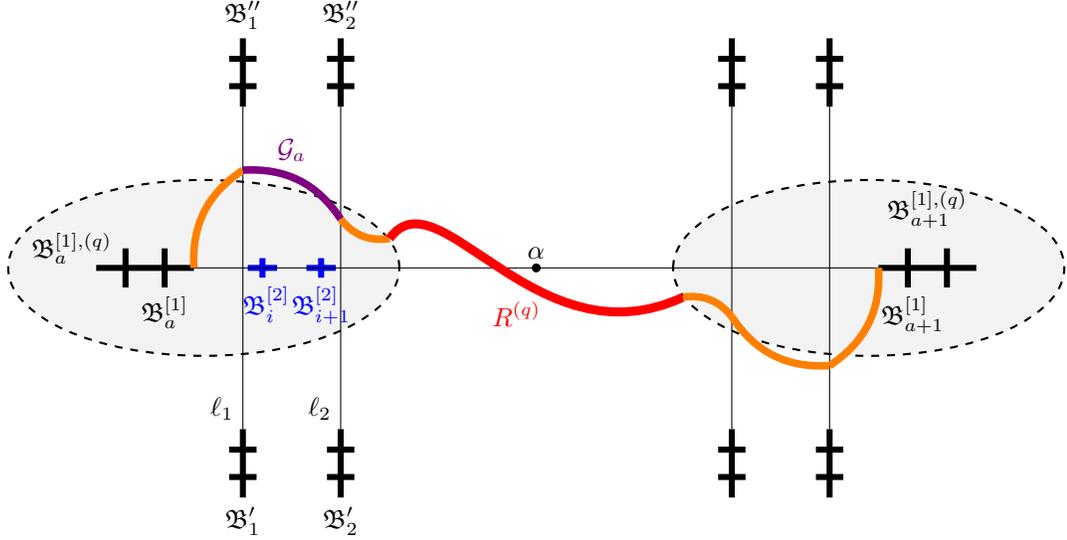
\begin{figure}[t!]
\[\begin{tikzpicture}[scale=1.3]

\begin{scope}[rotate =0]
\draw[line width=0.8mm] (-0.5,0)--(0.5,0); 
\draw[line width=0.8mm] (-0.2,-0.2)--(-0.2,0.2);
\draw[line width=0.8mm] (0.2,-0.2)--(0.2,0.2);
\coordinate (ba) at (0.5,0);
\node[below] at  (0.2,-0.2) {$\bush^{[1]}_{a}$};

 \draw[dashed,line width=0.8,fill, fill opacity=0.05] (0.6,0) ellipse (2cm and 0.9cm);
\node[] at (-0.75,0.2){$\bush^{[1], (q)}_{a}$};

\begin{scope}[shift={(1,2)},rotate=90,scale=0.7]
\draw[line width=0.8mm] (-0.5,0)--(0.5,0); 
\draw[line width=0.8mm] (-0.2,-0.2)--(-0.2,0.2);
\draw[line width=0.8mm] (0.2,-0.2)--(0.2,0.2);
\coordinate (xup) at (-0.5,0);

\node[above] at (0.5,0) {$\bush''$};
\end{scope}
\begin{scope}[shift={(1,-2)},rotate=-90,scale=0.7]
\draw[line width=0.8mm] (-0.5,0)--(0.5,0); 
\draw[line width=0.8mm] (-0.2,-0.2)--(-0.2,0.2);
\draw[line width=0.8mm] (0.2,-0.2)--(0.2,0.2);
\coordinate (xdown) at (-0.5,0);

\node[left] at (-0.8,0) {$\ell_1$};
\node[below] at (0.5,0) {$\bush'$};
\end{scope}

\draw[] (xup)  -- (xdown); 

\begin{scope}[shift={(-0.05,0)}]
\draw [blue, line width=0.8mm] (1.1,0)--(1.4,0)
(1.25,-0.1)  -- (1.25,0.1) ;
\node[blue, above] at(1.38,0.) {$\bush_{i_1}^{[2]}$};
\end{scope}
%xxxx

\begin{scope}[shift={(0.55,0)}]
\draw [blue, line width=0.8mm] (1.1,0)--(1.4,0)
(1.25,-0.1)  -- (1.25,0.1) ;
\node[blue, below] at(1.2,-0.1) {$\bush_{i_2}^{[2]}$};
\end{scope}

\begin{scope}[shift={(1,0)}]
\begin{scope}[shift={(1,2)},rotate=90,scale=0.7]
\draw[line width=0.8mm] (-0.5,0)--(0.5,0); 
\draw[line width=0.8mm] (-0.2,-0.2)--(-0.2,0.2);
\draw[line width=0.8mm] (0.2,-0.2)--(0.2,0.2);
\coordinate (xup) at (-0.5,0);
\node[above] at (0.5,0) {$\bush'''$};

\end{scope}
\begin{scope}[shift={(1,-2)},rotate=-90,scale=0.7]
\draw[line width=0.8mm] (-0.5,0)--(0.5,0); 
\draw[line width=0.8mm] (-0.2,-0.2)--(-0.2,0.2);
\draw[line width=0.8mm] (0.2,-0.2)--(0.2,0.2);
\coordinate (xdown) at (-0.5,0);
\node[below] at (0.5,0) {$\bush^\rn{4}$};

\node[left] at (-0.8,0) {$\ell_2$};

\end{scope}

\draw[] (xup)  -- (xdown); 

\end{scope}

\end{scope}

\begin{scope}[shift={(8,0)},xscale=-1]
\draw[line width=0.8mm] (-0.5,0)--(0.5,0); 
\draw[line width=0.8mm] (-0.2,-0.2)--(-0.2,0.2);
\draw[line width=0.8mm] (0.2,-0.2)--(0.2,0.2);
\coordinate (bb) at (0.5,0);
\node[below] at  (0.15,-0.2) {$\bush^{[1]}_{a+1}$};

 \draw[dashed,line width=0.8,fill, fill opacity=0.05] (0.6,0) ellipse (2cm and 0.9cm);
\node[] at (-0,0.6){$\bush^{[1], (q)}_{a+1}$};

\begin{scope}[shift={(1,2)},rotate=90,scale=0.7]
\draw[line width=0.8mm] (-0.5,0)--(0.5,0); 
\draw[line width=0.8mm] (-0.2,-0.2)--(-0.2,0.2);
\draw[line width=0.8mm] (0.2,-0.2)--(0.2,0.2);
\coordinate (xup) at (-0.5,0);
\end{scope}
\begin{scope}[shift={(1,-2)},rotate=-90,scale=0.7]
\draw[line width=0.8mm] (-0.5,0)--(0.5,0); 
\draw[line width=0.8mm] (-0.2,-0.2)--(-0.2,0.2);
\draw[line width=0.8mm] (0.2,-0.2)--(0.2,0.2);
\coordinate (xdown) at (-0.5,0);
\end{scope}

\draw[] (xup)  -- (xdown); 

\begin{scope}[shift={(1,0)}]
\begin{scope}[shift={(1,2)},rotate=90,scale=0.7]
\draw[line width=0.8mm] (-0.5,0)--(0.5,0); 
\draw[line width=0.8mm] (-0.2,-0.2)--(-0.2,0.2);
\draw[line width=0.8mm] (0.2,-0.2)--(0.2,0.2);
\coordinate (xup) at (-0.5,0);
\end{scope}
\begin{scope}[shift={(1,-2)},rotate=-90,scale=0.7]
\draw[line width=0.8mm] (-0.5,0)--(0.5,0); 
\draw[line width=0.8mm] (-0.2,-0.2)--(-0.2,0.2);
\draw[line width=0.8mm] (0.2,-0.2)--(0.2,0.2);
\coordinate (xdown) at (-0.5,0);
\end{scope}

\draw[] (xup)  -- (xdown); 

\end{scope}

\end{scope}

\draw[] (ba)--(bb);

\draw[orange,line width=1mm] (0.5,0) edge[bend left]  (1,1);

\draw[violet,line width=1mm] (1,1) edge[bend left]  (2,0.5);

\node[above,violet] at (1.5,1) {$\GG_a$};

\draw[orange,line width=1mm] (2,0.5) edge[bend right]  (2.5,0.3);

\draw[red,line width=1.2mm] (2.5,0.3) edge[bend right]  (2.5,0.3)
 .. controls  (3, 1) and (4,-1) .. 
(5.5,-0.3)
;

\filldraw  (4,0) circle (0.04 cm);
\node[above] at (4,0) {$\alpha$};
\node[red,below]at(3.8,-0.25) {$ R^{(q)}$};

\begin{scope}[shift={(8,0)},scale=-1]
\draw[orange,line width=1mm] (0.5,0) edge[bend left]  (1,1);

\draw[orange,line width=1mm] (1,1) edge[bend left]  (2,0.5);

\draw[orange,line width=1mm] (2,0.5) edge[bend right]  (2.5,0.3);
\end{scope}

\end{tikzpicture}\]

\caption{The non-spiraling case. The rectangle $R$ (orange, violet, red) connects $\bush^{[1]}_a$ and $\bush^{[1]}_{a+1}$. Between $\bush^{[1],(q)}_a$ and $\bush^{[1],(q)}_{a+1}$ most of the rectangle $R$ travels through its lift $ R^{(q)}$ (red). Therefore, the part $\GG_a$ (violet) of $R$ between $\ell_1$ and $\ell_2$ is tremendously wide. The bushes $\bush',\bush',\bush''',\bush^\rn{4}$ are lifts of $\bush^{[1]}_{a+1}$, see Figure~\ref{Fg:lem:bush:prim comb}.}
\label{Fg:no spiral}
\end{figure}

\begin{rem}
Recall the the pure Mapping Class Group $\operatorname{MCG}$ of a $3$-punctured sphere $S^2\setminus \{a,b,\infty\}$ is trivial. By replacing $a,b$ with small open Jordan disks $D_a,D_b$, we obtain the new group $\operatorname{MCG}(S^2\setminus (D_a\cup D_b\cup \{\infty\}))\simeq \Z^2$ consisting of Dehn-twists around $D_a,D_b$, see~\cite{MCG}. This fact has the following implication.

Let $O$ be a small disk-neighborhood of $\bush^{[1]}_a\cup \Pi \cup \bush^{[1]}_{a+1}$. Then $O'\coloneqq O\setminus (\bush^{[1]}_a\cup \bush^{[1]}_{a+1})$ is topologically a disk with two holes. Therefore, a path (simple curve) $\beta \subset O'$ from $\bush^{[1]}_a$ to $\bush^{[1]}_{a+1}$ has a simple description: it first spirals $\tilde t_a\in \Z$ times around $\bush^{[1]}_a$ and then spirals $\tilde t_{a+1}\in \Z$ times around $\bush^{[1]}_{a+1}$, where $\Pi$ can be used as a reference ``path'' of zero spiralings. Below we will ignore the spiraling orientation and introduce the absolute quantity $t_a=|\tilde t_a|$.
\end{rem}

Recall \S\ref{ss:WAD} that $\gamma$ lands at the ideal boundary of $V\setminus \bbush^{[1]}$. Let $\gamma_\Pi\subset V\setminus \bbush^{[1]}$ be a path homotopic rel the endpoints to $\gamma$ in $V\setminus \bbush^{[1]}$ such that $\gamma_\Pi\cap \Pi$ is the minimal possible number. In other words, $\gamma_\Pi$ and $\Pi$ are in the minimal position -- this is well defined because $\Pi$ has infinitely many cut points. 

Consider the components $\gamma_0,\gamma_1,\dots, \gamma_{f}, \gamma_{f+1}$ of $\gamma_\Pi\setminus \Pi.$  Since $\gamma$ is properly homotopic into (a small neighborhood of) $\Pi$, there is a $t_a\le f$ such that
\begin{itemize}
\item $\gamma_t\cup \Pi$ surrounds $\bush^{[1]}_a$ for $t\in \{1,2,\dots, t_a\}$;
\item $\gamma_t\cup \Pi$ does not surround $\bush^{[1]}_{a}$ for $t\in \{t_a+1,\dots ,f\}$.
\end{itemize}
(It follows that $\gamma_t\cup \Pi$ surrounds $\bush^{[1]}_{a+1}$ for $t>t_a$, and only $\gamma_{t_a}\cup \Pi$ can surround both $\bush^{[1]}_a$ and $\bush^{[1]}_{a}$.) 

We say that $t_a=t_a(\gamma)$ is the \emph{spiraling number of $\gamma$ around $\bush^{[1]}_a$.} Similarly,  the \emph{spiraling number of $\gamma\in R^{(s)}$ around $\bush^{[1],(sp_1)}_a$} is introduced. Below are some properties of spiraling numbers:
\begin{enumerate}[label=\text{(\Alph*)},font=\normalfont,leftmargin=*]
\item \label{prop:A:thm:sat} The spiraling numbers of $\gamma_1,\gamma_2\in R$ differ by at most $1$. 
\item  \label{prop:B:thm:sat} If $\gamma_s\in R^{(s)}$ is the lift of $\gamma\in R$ into $R^{(s)}$, then $t_a(\gamma_s)\le \frac{1}{2^s} t_a(\gamma)$.  
\end{enumerate}
Indeed,~\ref{prop:A:thm:sat} follows from the fact (the disjoint curves) $\gamma_1,\gamma_2$ can be simultaneously put into the minimal position with $\Pi$. And~\ref{prop:B:thm:sat} follows from the fact that $f^{sp_1}\colon\bush^{[1],(sp_1)}_a\to \bush^{[1]}_a $ has degree $2^s$.

\subsubsection{The non-spiraling case:} $t_a(\gamma)\le 4$ for all $\gamma\in R$; see Figure~\ref{Fg:no spiral} for illustration. Write $q\coloneqq 10 \overline p$ and note that $qp_1>10 p_2$.

Let us consider the objects introduced in Lemma~\ref{lem:bush:prim comb}. It follows from~\ref{prop:B:thm:sat} that curves in $R^{(3)}$ do not spiral around $\bush^{[1],(3)}_a$. Therefore, $\ell_2$ separates the base of $R^{(q)}$ from $\bush_a^{[1]}$:  the base $\partial ^{h,0} R^{(q)}$ and $\bush_a^{[1]}$ are in different components of $\bush^{[1],(q)}_a\setminus \ell_2$.

Let $\GG_a$ be the restriction of $\FamG^{s}$ between $\ell_1$ and $\ell_2$ -- this lamination consists of the first shortest subarcs $\gamma'\subset \gamma$ between $\ell_1$ and $\ell_2$ for all $\gamma\in \FamG^s$. Since $\FamG^s$ consequently overflows $\FamG_a$ and then $R^{(q)}$, we obtain from the Gr\"otzsch inequality that
\begin{equation}
\label{eq:Grotzsch:ineq}
  (1-\delta)\Width(R)\le \Width(\FamG^{q})\le \Width( R^{(q)})\oplus \Width(\GG_a)=\Width( R)\oplus \Width(\GG_a),
\end{equation}
where $\Width( R^{(q)})=\Width(R)$ because $ R^{(q)}$ is a lift of $R$. Therefore, $\Width(\GG_a)\ge C_{0,\delta} \Width(R)\ge C_{0,\delta} K,$ where $C_{0,\delta}\to \infty$ as $\delta\to 0$.

There are two possibilities. If a substantial part of $\FamG_a$ travels through both $\bush^{[2]}_{i_1}, \bush^{[2]}_{i_2}$, then this substantial part of  $\FamG_a$ restricts to a wide lamination $\LL$  between $\bush^{[2]}_{i_1}, \bush^{[2]}_{i_2}$. Pushing $\LL$ with respect to $f_*$ and $\iota^*$ and using Lemma~\ref{generaltransform rules} (the number of iterations and the degree is bounded in terms of $\overline p$), we obtain a family $\Fam$ of non-trivial proper curves in $V\setminus \bbush^{[2]}$ starting at $\bush^{[2]}_0$ such that $\Width(\Fam)\succeq_{\overline p}  C_{0,\delta} K$. If a substantial part of $\Fam$ is vertical, then we obtain the second estimate in~\eqref{eq:1:thm:sat pull off},~\eqref{eq:2:thm:sat pull off}. If a substantial part of $\Fam$ is horizontal, then we obtain the first estimate in~\eqref{eq:1:thm:sat pull off}; applying Lemma~\ref{lem:reverse to sup chain rule} to $\Fam$, we obtain the first estimate in~\eqref{eq:2:thm:sat pull off}.

If a substantial part of $\GG_a$ omits either $\bush^{[2]}_{i_1}$ or $\bush^{[2]}_{i_2}$, then we obtain a wide wave above one of the sides of $\bush^{[2]}_{i_1}, \bush^{[2]}_{i_2}$; Wave Lemma~\ref{lem:Wave} implies that $\Width_\bullet(F)\ge C_\delta K$.

\begin{figure}[t!]
\[\begin{tikzpicture}[scale=1.3]

\begin{scope}[rotate =0]
\draw[line width=0.8mm] (-0.5,0)--(0.5,0); 
\draw[line width=0.8mm] (-0.2,-0.2)--(-0.2,0.2);
\draw[line width=0.8mm] (0.2,-0.2)--(0.2,0.2);
\coordinate (ba) at (0.5,0);
\node[below] at  (0.2,-0.2) {$\bush^{[1]}_{a}$};

 \draw[dashed,line width=0.8,fill, fill opacity=0.05] (0.6,0) ellipse (2cm and 0.9cm);
\node[] at (-0.85,0.1){$\bush_{a}^{[1],(1)}$};

\end{scope}

\begin{scope}[shift={(8,0)},xscale=-1]
\draw[line width=0.8mm] (-0.5,0)--(0.5,0); 
\draw[line width=0.8mm] (-0.2,-0.2)--(-0.2,0.2);
\draw[line width=0.8mm] (0.2,-0.2)--(0.2,0.2);
\coordinate (bb) at (0.5,0);
\node[below] at  (0.15,-0.2) {$\bush^{[1]}_{a+1}$};

 \draw[dashed,line width=0.8,fill, fill opacity=0.05] (0.6,0) ellipse (2cm and 0.9cm);
\node[] at (-0.95,-0.1){$\bush_{a+1}^{[1],(1)}$};

\end{scope}

\draw[] (ba)--(bb);

\draw[orange,line width=1mm] (0.2,-0.2)edge[bend right]  (0.8,0) ;
\draw[violet,line width=1mm] (0.8,0) edge[bend right=60]  (-1.5,0);

\draw[violet,line width=1mm] (-1.5,0) edge[bend right=40] (1.5,-0.5) ;

\draw[violet,line width=1mm](1.5,-0.5) edge[bend left=10]  (2,0);

\node[above,violet] at (0.7,0.3){$\GG_a$};

\draw[orange,line width=1mm](2,-0) edge[bend left=10]  (2.5,0.3);

\draw[red,line width=1.2mm](2.5,0.3) edge[bend right=50]  (-1.5,0.5) ;

\draw[red,line width=1.2mm] (-1.5,0.5) edge[bend right=30]  (-1.5,-0.5) ;

\draw[red,line width=1.2mm]  (-1.5,-0.5) edge[bend right=40]  (5.5,-0.3) ;

\node[red,above]at(3.8,-1.2) {$R^{(1)}$};

\begin{scope}[shift={(8,0)},scale=-1]
\draw[orange,line width=1mm] (0.5,0) edge[bend left]  (1,1);

\draw[orange,line width=1mm] (1,1) edge[bend left]  (2,0.5);

\draw[orange,line width=1mm] (2,0.5) edge[bend right]  (2.5,0.3);
\end{scope}

\filldraw  (4,0) circle (0.04 cm);
\node[above] at (4,0) {$\alpha$};

\end{tikzpicture}\]
\caption{The spiraling case. Since $R$ and its lift  $R^{(1)}$  have different spiraling numbers around $\bush^{[1]}_a$ and  $\bush^{[1]}_{a+1}$ respectively, (most of) $R$ spirals first around $\bush^{[1]}_a$ before it continues as $R^{(1)}$ (red). The part $\GG_a$ (violet) of $R$ before $R^{(1)}$ is tremendously wide.}
\label{Fg:spiral}
\end{figure}
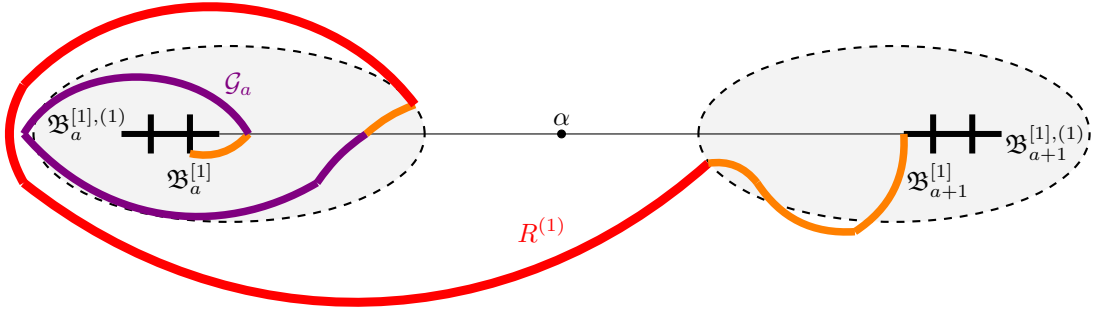

\subsubsection{The spiraling case:} $t_a(\gamma)\ge 4$ for all $\gamma\in R$; see Figure~\ref{Fg:spiral} for illustration. It follows from~\ref{prop:B:thm:sat} that the spiraling number of any curve in $\FamG^1$ around $\bush^{[1]}_a$ is strictly less than the spiraling number of any curve in $R^{(1)}$ around $\bush^{[1],(1)}_a$. Therefore, curves in $\FamG^1$ first spirals around $\bush_a$ before they continue within $\Fam^\full( R^{(1)})$. Applying the Gr\"otzsch inequality as in~\eqref{eq:Grotzsch:ineq}, we obtain a wide lamination $\GG_a$ with $\Width(\GG_a)\ge C_{2,\delta} \Width(R)$ such that $\GG_a$ creates a wave above one of the sides of $\bush_a$. Wave Lemma~\ref{lem:Wave} implies that $\Width_\bullet(f)\ge C_\delta K$.\qed

\begin{rem}\label{rem:about thm:sat case}The argument of this section will be repeated in~\S\ref{sss:prf:eq:prf:thm:psi bounds} with $\bush^{[1]}_{a+1}$ being replaced by a little Julia set $\filled^{[1]}_0$. Observe that no specific properties of $\bush^{[1]}_{a+1}$ has been used in this section (only that it is a forward invariant set that is disjoint from $\bush^{[1]}_a$).
\end{rem}

\section{Conclusions}\label{s:conclusions} In this section, we will now deduce the main theorems.

\begin{thm}[A priori beau $\psi^\bullet$-bounds]
\label{thm:psi bullet bounds}
For any combinatorial bound $\bar p$, there is an $n>1$ and $K_{\bar p}>1$ such that the following holds. If $F$ is an infinitely renormalizable $\psi^\bullet$-ql map of bounded type $\bar p$, then 
\[ \Width_\bullet \left[ (\RR^{n\bullet})^m (F)\right]\le K_{\overline p} \sp\sp\sp \text{ for }\sp m\ \gg_{\Width_\bullet(F)}\ 1,\] 
$\RR^{n \bullet}$ is a $\psi^\bullet$-ql renormalization~\eqref{RR bullet f}.
\end{thm}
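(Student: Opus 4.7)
The plan is a record argument that combines the two pull-off dichotomies (Theorems~\ref{thm:prim pull-off} and~\ref{thm:sat pull off}) with the Teichm\"uller contraction (Proposition~\ref{prop:Teichm contr}) to close off every escape route. Fix the depth $n$ from Theorem~\ref{thm:prim pull-off} so that the period $p_n$ is large in terms of $\bar p$, and choose $\delta>0$ small enough that the amplification constant $C_\delta$ of Theorem~\ref{thm:sat pull off} satisfies $C_\delta>10\Delta^{n+2}$, where $\Delta=\Delta_{\bar p}$ is the Teichm\"uller constant. Set $F^{(k)}:=(\RR^{n\bullet})^k F$ and $W_k:=\Width_\bullet(F^{(k)})$. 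Suppose toward a contradiction that no uniform bound $K_{\bar p}$ works: after a waiting time dictated by Proposition~\ref{prop:Teichm contr} (depending on $\Width(F)$), one can pick a record index $k_\star\gg_{\bar p,\delta,n}1$ with $W_{k_\star}\ge W_j$ for all $j<k_\star$ and $W_{k_\star}\gg_{\bar p,\delta,n}1$.

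Apply Theorem~\ref{thm:prim pull-off} to the pair $(G,F^{(k_\star)})$ with $G:=F^{(k_\star-1)}$ and $K:=W_{k_\star}$. Case~\ref{case:P:thm:prim pull-off} returns $W_{k_\star-1}=\Width_\bullet(G)\ge 2W_{k_\star}$, immediately contradicting the record. So we must be in Case~\ref{case:S:thm:prim pull-off}: $\RR^{(n-1)\bullet}(G)$ carries a $\delta$-almost periodic rectangle $R$ between two neighboring level-one bushes with $\Width(R)\ge W_{k_\star}/20$. Feed $R$ into Theorem~\ref{thm:sat pull off}, with the theorem's $G$ taken to be our $G$ and the theorem's $n$ equal to $n-1$; alternative~\eqref{eq:2:thm:sat pull off} yields one of
\begin{enumerate}[label=(\alph*),font=\normalfont,leftmargin=*]
\item\label{plan:alt:a} $\Width_\bullet\big[\RR^{(n+1)\bullet}(G)\big]\ge C_\delta W_{k_\star}/20$, or
\item\label{plan:alt:b} $\Width_\bullet\big[\RR^{(n-1)\bullet}(G)\big]\ge C_\delta W_{k_\star}/20$.
\end{enumerate}
Alternative~\ref{plan:alt:b} combined with the Sup-Chain Rule~\eqref{eq:width:sup chain rule} and the comparability~\eqref{eq:LocWidth is compar} forces $W_{k_\star-1}$ or an even earlier $W_j$ to exceed $W_{k_\star}$, again contradicting the record. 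Alternative~\ref{plan:alt:a} is the delicate case: by the Sup-Chain Rule, $\Width_\bullet[\RR^{\bullet}(F^{(k_\star)})]\ge C_\delta W_{k_\star}/20$, so we can re-enter the entire dichotomy with the deeper $\psi^\bullet$-ql map $\RR^{\bullet}(F^{(k_\star)})$. Iterating, either the amplification cascades for at least $n+2$ rounds (producing a $\psi^\bullet$-width $\gtrsim C_\delta^{(n+2)/2}W_{k_\star}$, which by Remark~\ref{rem:lem:from psi-ql to ql} converts into a ql-width at a definite renormalization depth beyond the $O(\Delta^{n+2})$ upper bound of Proposition~\ref{prop:Teichm contr}---the contradiction hard-wired by $C_\delta>10\Delta^{n+2}$) or the iteration triggers the wave alternative inside the proof of Theorem~\ref{thm:sat pull off}, in which case the Wave Lemma~\ref{lem:Wave} back-propagates the amplification to a level below $k_\star$ and again violates the record.

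The hard part is executing the cascade in alternative~\ref{plan:alt:a} cleanly: each subsequent application of Theorem~\ref{thm:sat pull off} requires the amplified map to again land in the satellite Case~\ref{case:S:thm:prim pull-off} of Theorem~\ref{thm:prim pull-off}, and one must verify that whenever Case~\ref{case:P:thm:prim pull-off} recurs the result is a direct record-violation rather than an infinite regress. Once the argument is closed, beau bounds are automatic: the constants $n$, $\delta$, $C_\delta$, $\Delta$ and the absolute thresholds in the pull-off theorems depend only on $\bar p$, so the emerging $K_{\bar p}$ depends only on $\bar p$; the initial width $\Width(F)$ enters solely through the waiting time $m\gg_{\Width(F)}1$ needed by Proposition~\ref{prop:Teichm contr} before the record regime is reached.
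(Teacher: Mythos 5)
Your overall strategy---the pull-off dichotomy of Theorems~\ref{thm:prim pull-off}/\ref{thm:sat pull off} against the Teichm\"uller contraction of Proposition~\ref{prop:Teichm contr}---is the paper's, and the record framing is a legitimate packaging of the backward tracing through Case~\ref{case:P:thm:prim pull-off}. Two of your reductions, however, do not hold as stated. For alternative~(b), the Sup-Chain Rule~\eqref{eq:width:sup chain rule} and the comparability~\eqref{eq:LocWidth is compar} do not force $W_{k_\star-1}\geq W_{k_\star}$: the Sup-Chain Rule bounds compositions $\RR^{n_1\bullet}\circ\RR^{n_2\bullet}$ \emph{from below} by the direct $\RR^{(n_1+n_2)\bullet}$ and therefore cannot convert the lower bound on $\Width_\bullet\bigl[\RR^{(n-1)\bullet}(G)\bigr]$ into a lower bound on $\Width_\bullet(G)$, while~\eqref{eq:LocWidth is compar} compares the widths $\Width(F_{n,i})$ over $i$ at a \emph{fixed} renormalization depth $n$ and says nothing across depths. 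Handling alternative~(b) requires re-running the dichotomy (Theorem~\ref{thm:prim pull-off} for pairs $(G,\RR^{(n-j)\bullet}G)$, then Theorem~\ref{thm:sat pull off}) and eliminating the impossible branch $W_{k_\star}=\Width_\bullet[\RR^{n\bullet}G]\succeq C_\delta^{2}W_{k_\star}$; citing the two inequalities is not a substitute.

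The cascade in alternative~(a) is also set up incorrectly. ``Re-entering the dichotomy with $\RR^{\bullet}(F^{(k_\star)})$'' is not well-posed: Theorem~\ref{thm:prim pull-off} needs a lower bound on the width of the $n$-th $\psi^{\bullet}$-renormalization of the base map, which you do not control for $\RR^{\bullet}(F^{(k_\star)})$. The paper holds $G=F^{(k_\star-1)}$ fixed and re-applies Theorem~\ref{thm:prim pull-off} to $(G,\RR^{(n+j)\bullet}G)$ for $j=1,\dots,n$, inductively using $\Width_\bullet[\RR^{(n+j)\bullet}G]\succeq C_\delta^{j}W_{k_\star}$; only after $n$ rounds does the Sup-Chain Rule convert $\Width_\bullet[\RR^{2n\bullet}G]\succeq C_\delta^{n}W_{k_\star}$ into $W_{k_\star+1}\succeq C_\delta^{n}W_{k_\star}$. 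Your Teichm\"uller comparison is also at the wrong depth: Proposition~\ref{prop:Teichm contr} gives $O_F(\Delta^{D})$ at depth $D$, a bound growing with $D$, not a flat $O(\Delta^{n+2})$. The contradiction is not reached in ``$n+2$ rounds''; rather, once $W_{k_\star+1}\succeq C_\delta^{n}W_{k_\star}$, Case~\ref{case:P:thm:prim pull-off} at $k_\star+1$ is impossible (it would give $W_{k_\star}\geq 2W_{k_\star+1}$), so $W_{k_\star+k}\succeq C_\delta^{nk}W_{k_\star}$ for \emph{all} $k$, contradicting $W_{k_\star+k}\leq O_F\bigl(\Delta^{n(k_\star+k)}\bigr)$ as $k\to\infty$ precisely because $C_\delta\gg\Delta$. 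The stronger condition $C_\delta>10\Delta^{n+2}$ is never used, and the separately invoked ``wave alternative'' is spurious: the Wave Lemma is internal to the proof of Theorem~\ref{thm:sat pull off}, whose statement already absorbs it as the alternative~(b) that you have not actually resolved.
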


\begin{proof}
Let us choose a sufficiently small $\delta>0$ such that  $C_\delta\gg \Delta_{\overline p}$,
where
 $C_\delta$ is from Theorem~\ref{thm:sat pull off} and $\Delta_{\overline p}$ is from Proposition~\ref{prop:Teichm contr}. We next choose a sufficiently big $n\gg _{\overline p, \delta} 1$ such that Theorem~\ref{thm:prim pull-off} is applicable as follows: if 
 \[\Width_\bullet \left[ (\RR^{n\bullet})^m (F)\right] = K \gg_{\overline p, \delta, n} 1,\]
 then 
\begin{enumerate}[label=\text{(\Roman*)},font=\normalfont,leftmargin=*]
\item\label{case:1:final}  either $\Width_\bullet \left[ (\RR^{n\bullet})^{(m-1)} (F)\right] \ge 2 K$; 
\item\label{case:2:final} or $\Width_\bullet \left[ \RR^{n+1\bullet} \circ (\RR^{n\bullet})^{m-1} (F)\right] \ge C_\delta K$,
\end{enumerate}
where~\ref{case:2:final} follows from Case~\ref{case:S:thm:prim pull-off} of Theorem~\ref{thm:prim pull-off} combined with Theorem~\ref{thm:sat pull off}. 

Write $G\coloneqq (\RR^{n\bullet})^{(m-1)} (F).$ Reapplying Theorem~\ref{thm:prim pull-off} for $G$ and its renormalization $\RR^{n+1 \bullet }G$, we obtain the alternative ``\ref{case:1:final} vs \ref{case:2:b:final}'', where
\begin{enumerate}[label=\text{(\Roman*')},font=\normalfont,start=2, leftmargin=*]
\item\label{case:2:b:final} or $\Width_\bullet \left[ \RR^{n+2\bullet} (G)\right] \ge C^2_\delta K$.
\end{enumerate}
Repeating the argument, we eventually obtain the alternative ``\ref{case:1:final} vs \ref{case:2:c:final}'', where
 \begin{enumerate}[label=\text{(\Roman*'')},font=\normalfont,start=2, leftmargin=*]
\item\label{case:2:c:final} or $\Width_\bullet \left[ \RR^{2n\bullet} (G)\right] \ge C^n_\delta K$.
\end{enumerate}

Therefore, if there are no a priori beau $\psi^\bullet$-bounds, then we obtain
\[\Width_\bullet \left[ (\RR^{n\bullet})^{m} (F)\right]  \succeq_F \ C^{n(m-1)}_\delta K.\]
This contradicts the Teichm\"uller contraction: by Proposition~\ref{prop:Teichm contr}, see also Remark~\ref{rem:prop:Teichm contr}, the sequence $(\RR^{n\bullet})^m (F)$ restricts to ql maps $f_{nm}:X_{nm}\to Y_{nm}$ such that  
\[ \Width\big(Y_{nm}\setminus X_{nm}\big) =O_F(\Delta_{\overline p}^{nm}) ,\]
where $\Delta_{\overline p}\ll C_\delta.$
\end{proof}

\begin{thm}[A priori beau ql-bounds]
\label{thm:psi bounds}
For any combinatorial bound $\bar p$, there is a $K_{\bar p}>1$ such that the following holds. If $f\colon U\to V$ is an infinitely renormalizable ql map of bounded type $\bar p$, then for $n\gg_{\Width(f)} 1$, the map $f_n$ has a ql restriction
\[ f_n\colon X_n\to Y_n\sp\sp\sp\text{ with }\sp\sp \Width(Y_n\setminus X_n)\le K_{\overline p}.\] 
\end{thm}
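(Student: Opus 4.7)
I will deduce Theorem~\ref{thm:psi bounds} from Theorem~\ref{thm:psi bullet bounds} by converting $\psi^\bullet$-ql bounds to classical ql bounds via the unbalanced $\psi$-ql renormalization of~\S\ref{sss: psi^b to psi}. Let $F$ be the natural $\psi^\bullet$-ql promotion of $f$. Theorem~\ref{thm:psi bullet bounds} gives $\Width_\bullet(G)\le K_{\overline p}$ for $G \coloneqq (\RR^{n\bullet})^m F$ whenever $m \gg_{\Width(f)} 1$; here $G$ is the $\psi^\bullet$-ql map associated with the $nm$-th ql renormalization $f_{nm}$. The plan is to produce, from such a $G$, a classical ql restriction of $f_{nm+1}$ with universally bounded modulus, realized inside the dynamical plane of $f$; Theorem~\ref{thm:psi bounds} then follows after reindexing.

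The core step is to bound
\[ \Width(G^{\psi}) \;=\; \Width\bigl(V_{\bUpsilon,1}\setminus \filled^{[1]}_0(G)\bigr) \;\le\; K'(K_{\overline p}), \]
where $G^\psi = F_{\bUpsilon,1}$ is the unbalanced $\psi$-ql renormalization of $G$ from~\S\ref{sss: psi^b to psi}. Once this bound is secured, Lemma~\ref{lem:from psi-ql to ql} produces a ql restriction $X\Subset Y$ of $f_{nm+1}$ with $\mod(Y\setminus X)\ge \mu(K')$, and Remark~\ref{rem:lem:from psi-ql to ql} guarantees that this restriction is realized univalently in the dynamical plane of $f$.

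The main obstacle is precisely that bound. The difficulty is that $\Width_\bullet(G)$ controls only curves hitting the whole bush $\bush_G = \Hub_G\cup \bbush^{[1]}(G)$, whereas $\Width(G^\psi)$ is sensitive to curves that isolate the single little Julia set $\filled^{[1]}_0(G)$ from the remaining level-one components of $\bush_G$; in the satellite case these touch at the $\alpha$-fixed point, so excess degeneration around $\filled^{[1]}_0(G)$ alone is a priori possible. Following Remark~\ref{rem:about thm:sat case}, I would rerun the pull-off arguments of Theorems~\ref{thm:prim pull-off} and~\ref{thm:sat pull off} with the neighbour bush $\bush^{[1]}_{a+1}$ replaced by a suitable forward-invariant target disjoint from $\bush^{[1]}_0(G)$ (in the satellite case, e.g.\ the union of the non-central level-one bushes), invoking the inhomogeneous arc-diagram framework of~\S\ref{sss:mixed conf} and Lemma~\ref{lem:per arcs:ups} to align horizontal rectangles with the Hubbard continuum. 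Any excess horizontal weight then produces, via the Gr\"otzsch inequality, either a wide lamination landing in a deeper bush, contradicting a one-level-deeper application of Theorem~\ref{thm:psi bullet bounds}, or a wide wave above a side of a level-two little Julia set, bounded by the Wave Lemma~\ref{lem:Wave}. Reindexing then delivers Theorem~\ref{thm:psi bounds}.
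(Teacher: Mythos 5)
Your proposal follows the paper's own proof of Theorem~\ref{thm:psi bounds}: obtain $\psi^\bullet$-bounds for $G_m=(\RR^{n\bullet})^m(f)$ from Theorem~\ref{thm:psi bullet bounds}, bound the degeneration around $\filled^{[1]}_0$ by rerunning the pull-off in the inhomogeneous setting $\bUpsilon=\bbush^{[1]}\cup\filled^{[1]}_0$ (Lemmas~\ref{lem:Alignment:Ups}, \ref{lem:per arcs:ups}, Remark~\ref{rem:about thm:sat case}), reach a contradiction via either a deeper-level $\psi^\bullet$-bound or the Wave Lemma, and finally convert to a genuine ql restriction inside the dynamical plane of $f$ via Lemma~\ref{lem:from psi-ql to ql} and Remark~\ref{rem:lem:from psi-ql to ql}. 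The one wording slip---the forward-invariant set replacing $\bush^{[1]}_{a+1}$ in Definition~\ref{dfn:s delta inv rect} is $\filled^{[1]}_0$ itself (which contains $\bush^{[1]}_0$), paired against a non-central $\bush^{[1]}_a$, rather than something disjoint from $\bush^{[1]}_0$---does not affect the substance of the argument.
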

\begin{proof}Write $F\coloneqq f$, and consider the sequence $G_m\coloneqq (\RR^{n\bullet})^m (F)$ from Theorem~\ref{thm:psi bullet bounds}. We have 
\begin{equation}
\label{eq:prf:thm:psi bounds:0}
\Width_\bullet(G_m)\le K_{\overline p}\hspace{1cm}\text{ for }m\gg_{\Width_\bullet(F)} 1
\end{equation}
for $K_{\overline p}$ from Theorem~\ref{thm:psi bullet bounds}.

For $m$ satisfying~\eqref{eq:prf:thm:psi bounds:0}, consider the dynamical plane of $G_m\colon U\to V$. Let $\bbush^{[1]}$ be the cycle of little bushes in the dynamical plane of $G_m$. We {\bf~claim} that there is a space between $\filled^{[1]}_0$ and $\bbush^{[1]}\setminus \bush^{[1]}_0$; i.e., that there is a $K_2>0$ depending on $K_{\overline p}, \overline p$ such that
\begin{equation}
\label{eq:prf:thm:psi bounds}
\Width\left(V\setminus \bigcup_{i\not= 0} \bush^{[1]}_i,\ \filled^{[1]}_0\right) \le K_2.
\end{equation}
Then~\eqref{eq:prf:thm:psi bounds} together with~\S\ref{sss: psi^b to psi}, Lemma~\ref{lem:from psi-ql to ql}, and Remark~\ref{rem:lem:from psi-ql to ql}
 will imply that $G_m$ has a ql renormalization around $\filled_0^{[1]}$ with a definite modulus. 
 
\subsubsection{Proof of~\eqref{eq:prf:thm:psi bounds}}\label{sss:prf:eq:prf:thm:psi bounds}Assuming converse, the associated degeneration is arbitrary big:
\[ K\coloneqq \Width\left(\FamG\right)\gg_{K_{\overline p}} 1, \hspace{0.7cm}\text{ where }\sp \FamG\coloneqq \Fam\left(V\setminus \bigcup_{i\not= 0} \bush^{[1]}_i,\ \filled^{[1]}_0\right).\]
 We will now argue that a substantial part of $\FamG$ travels through level two little bushes; this will lead to a contradiction to the $\psi^\bullet$-bounds of Theorem~\ref{thm:psi bounds}.

Write $\bUpsilon\coloneqq \bbush^{[1]}\cup \filled^{[1]}_0$, and consider the horizontal and vertical WAD $\AA^{[1],k}_\hor, \AA^{[1],k}_\ver$ of $U^k\setminus \bUpsilon$. By~\eqref{eq:prf:thm:psi bounds:0}, 
\begin{equation}
\label{eq:prf of the claim: last thm}
\Width(\AA^{[1],k}_\ver)= O_{k,\overline p}(K_{\overline p}) \hspace{0.5cm}\text{ and hence }\hspace{0.4cm}\Width(\AA^{[1],k}_\hor)=K - O_{k,\overline p}(K_{\overline p}).
\end{equation}

As in the proof of Theorems~\ref{thm:prim pull-off}, see~\S\ref{sss:alignment with HT}, the arc diagrams $\AA^{[1],k p_1}_\hor$ eventually stabilize:
\[ \AD \left( \AA^{[1],(t+1) p_1}_\hor -C^{t+1}  \right)= \AD \left(  \AA^{[1],t p_1}_\hor -C^{t} \right) \sp\sp\sp\text{ for }\sp  t\le 3p_1, \]
where $C\gg_{p_1} 1$ is fixed. By  Lemma~\ref{lem:Alignment:Ups}, $H=\AD \left( \AA^{[1],(t+1) p_1}_\hor -C^{t+1}  \right)$ is invariant under $f^{p_1}$. Since most of the horizontal curves restrict to horizontal curves (by~\eqref{eq:prf of the claim: last thm}) the same argument as in~\S\ref{prf:P:thm:prim pull-off} provides a $\delta$-almost periodic rectangle $R\subset {V\setminus \bUpsilon},\  \Width(R)\asymp K$ between $\filled^{[1]}_0$ and some $\bush^{[1]}_a$, where $\delta$ is sufficiently small and $\filled^{[1]}_0$ replacing $\bush^{[1]}_{a+1}$ in Definition~\ref{dfn:s delta inv rect}. By Lemma~\ref{lem:per arcs:ups}, the first renormalization of $G_m$ is satellite.

We can now repeat the argument of Theorem~\ref{thm:sat pull off}, see Remark~\ref{rem:about thm:sat case}, with $\filled^{[1]}_0$ replacing $\bush^{[1]}_{a+1}$ to obtain 
\[\text{either }\sp \sp\sp\Width_\bullet\left[ \RR^{2 \bullet} (G_m)\right]\succeq_{\overline p} K \sp\sp\text{ or }\sp\sp  \Width_\bullet( G_m)\succeq_{\overline p}  K;\]
both estimates leading to a contradiction with $\psi^\bullet$-bounds of Theorem~\ref{thm:psi bullet bounds}.
\end{proof}

\end{document}